\documentclass[10pt]{amsart}
\usepackage{amssymb,amsmath}
\usepackage{hyperref}
\usepackage{bm}
\usepackage[]{graphicx}
\usepackage{amssymb, epsfig}
\newtheorem{theorem}{Theorem}[section]
\newtheorem{remm}{Remark}[section]
\newtheorem{lemma}{Lemma}[section]

%
%

\newtheorem{remark}[remm]{Remark}

\newtheorem{proposition}[theorem]{Proposition}

%
\numberwithin{equation}{section}
%

%

%
%

\newcommand{\rset}{{\mathbb R}}
\newcommand{\nset}{{\mathbb N}}

\newcommand{\bfdot}{\bf\dot}
\newcommand{\ken}{\ \ }

\newcommand{\ssy}{\scriptscriptstyle}

\newcommand{\half}{\frac{1}{2}}
\newcommand{\dtau}{\Delta\tau}
%

%
\pagestyle{plain}
\setlength{\textheight}{22.0truecm}
\setlength{\textwidth}{16.0truecm}
\setlength{\oddsidemargin}{0.00truecm}
\setlength{\evensidemargin}{0.00truecm}
%
\begin{document}
\title[]
{Crank-Nicolson finite element approximations\\
for a linear stochastic fourth order equation \\
with additive space-time white noise$^{*}$}
\thanks{%
$^{*}$Work supported by the Research Grant no.3570/THALES-AMOSICSS to the University
of Crete co-funded by the European Union (European Social Fund-ESF) and Greek National
Funds.}
\author[]
{Georgios E. Zouraris$^{\dag}$}
\thanks{%
$^{\dag}$Department of Mathematics and Applied Mathematics,
University of Crete, PO Box 2208,
GR--710 03 Heraklion, Crete, Greece.
(e-mail: georgios.zouraris@uoc.gr)}
\subjclass{65M60, 65M15, 65C30}
\keywords{finite element method, space-time white noise,
Crank-Nicolson time-stepping, fully-discrete approximations,
a priori error estimates, fourth-order linear parabolic SPDE}

%
%
\maketitle
%
%
%
\begin{abstract}
We consider a model initial- and Dirichlet boundary- value problem for
a fourth-order linear stochastic parabolic equation, in one space
dimension, forced by an additive space-time white noise. 
First, we approximate its solution by the solution of an auxiliary
fourth-order stochastic parabolic problem with additive, finite dimensional,
spectral-type stochastic load. 
Then, fully-discrete approximations of the solution to the approximate
problem are constructed by using, for the discretization in space, a
standard Galerkin finite element method based on $H^2$-piecewise
polynomials, and, for time-stepping, the Crank-Nicolson method.
Analyzing the convergence of the proposed discretization approach,
we derive strong error estimates which show that the order of strong
convergence of the Crank-Nicolson finite element method is equal to
that reported in \cite{KZ2010} for the Backward Euler finite element method.
\end{abstract}
%
%
%
\section{Introduction}\label{SECT1}
%
%
%
%
%
Let $T>0$, $D=(0,1)$, $(\Omega,{\mathcal F},P)$ be a complete
probability space, and consider a model initial- and Dirichlet
boundary- value problem for a fourth-order linear stochastic
parabolic equation formulated as follows:
seek a stochastic function $v:[0,T]\times{\overline D}\to\rset$
such that
\begin{equation}\label{PARAP}
\begin{gathered}
v_t +v_{xxxx}={\dot W}\quad\text{\rm in}
\,\,\,(0,T]\times D,\\
v(t,\cdot)\big|_{\ssy\partial D}
=v_{xx}(t,\cdot)\big|_{\ssy\partial D}=0\quad\forall\,t\in(0,T],\\
v(0,x)=w_0\quad\forall\,x\in D,\\
\end{gathered}
\end{equation}
a.s. in $\Omega$, where ${\dot W}$ denotes a space-time
white noise on $[0,T]\times D$ (see, e.g., \cite{Walsh86},
\cite{KXiong}) and $w_0:{\overline D}\rightarrow{\mathbb R}$ is a
deterministic initial condition.
The mild solution of the problem above (cf. \cite{Carolina1},
\cite{DapDeb1}) has the form
\begin{equation*}
v=w+u
\end{equation*}
where:
\par\textbullet\quad $w:[0,T]\times{\overline D}\rightarrow{\mathbb R}$ is the solution
to the deterministic problem:
\begin{equation}\label{Det_Parab}
\begin{gathered}
w_t +w_{xxxx} = 0
\quad\text{\rm in}
\,\,\,(0,T]\times D,\\
w(t,\cdot)\big|_{\ssy\partial D}
=w_{xx}(t,\cdot)\big|_{\ssy\partial D}=0
\quad\forall\,t\in(0,T],\\
w(0,x)=w_0(x)\quad\forall\,x\in D,\\
\end{gathered}
\end{equation}
which is written as
\begin{equation}\label{deter_green}
w(t,x)=\int_{\ssy D}G(t;x,y)\,w_0(y)\,dy
\quad \forall\,(t,x)\in(0,T]\times{\overline D}
\end{equation}
with 
\begin{equation}\label{GreenKernel}
G(t;x,y)=\sum_{k=1}^{\infty}e^{-\lambda_k^4 t}
\,\varepsilon_k(x)\,\varepsilon_k(y)
\quad \forall\,(t,x)\in(0,T]\times{\overline D},
\end{equation}
and $\lambda_k:=k\,\pi$ for $k\in\nset$, and
$\varepsilon_k(z):=\sqrt{2}\,\sin(\lambda_k\,z)$
for $z\in{\overline D}$ and $k\in\nset$,
\par\noindent\vskip0.2truecm
\par\noindent
and
\par\noindent\vskip0.2truecm
\par\textbullet\quad  $u:[0,T]\times{\overline D}\rightarrow{\mathbb R}$ is
a stochastic function (known also as `{\it stochastic convolution}')
given by
\begin{equation}\label{MildSol}
u(t,x)=\int_0^t\!\int_{\ssy D}G(t-s;x,y)\,dW(s,y),
\end{equation}
which is, also,  the mild solution to the problem \eqref{PARAP}
when the initial condition $w_0$ vanishes.
\par
Thus, we can approximate numerically the mild solution $v$
by approximating separately the functions $w$ and $u$. 
In the work at hand, we focus on the development of a 
numerical method to approximate the stochastic part $u$
of the mild solution $v$ to the problem \eqref{PARAP}. In
particular, we will formulate and analyze
a numerical method which combines a Crank-Nicolson
time-stepping with a finite element method for
space discretization.
%
%
%
%
%
%
\subsection{An approximate problem for $u$}\label{maimou_problem}
%
%
\par
To construct computable approximations of $u$ we formulate
an auxiliary approximate stochastic fourth-order parabolic problem with
a finite dimensional additive noise inspired by the approach of 
\cite{ANZ} for the stochastic heat equation with additive
space-time white noise (cf. \cite{BinLi}, \cite{KZ2010},
\cite{KZ2013a}, \cite{KZ2013b}). 
%
%
%
%
\par\noindent
\hskip 1.0truecm\vbox{\hsize 14.0truecm\noindent\strut
Let ${\sf M}_{\star}\in\nset$ and
$S_{\ssy {\sf M}_{\star}}:=\mathop{\rm span}(\varepsilon_k)_{k=0}^{\ssy {\sf M}_{\star}}$.
Also, let ${\sf N}_{\star}\in\nset$, $\Delta{t}:=\frac{T}{{\sf N}_{\star}}$,
$t_n:=n\,\Delta{t}$ for $n=0,\dots,{\sf N}_{\star}$ be the
nodes of a uniform partition of the interval $[0,T]$ and
$T_n:=(t_{n-1},t_n)$ for $n=1,\dots,{\sf N}_{\star}$.
Then, we consider the fourth-order linear stochastic parabolic
problem:
\begin{equation}\label{AC2}
\begin{gathered}
{\widehat u}_t+{\widehat u}_{xxxx}={\widehat W}
\quad\text{\rm in}\,\,\, (0,T]\times D,\\
{\widehat u}(t,\cdot)\big|_{\ssy\partial D}=
{\widehat u}_{xx}(t,\cdot)\big|_{\ssy\partial D}=0
\quad\forall\,t\in(0,T],\\
{\widehat u}(0,x)=0\quad\forall\,x\in D,\\
\end{gathered}
\end{equation}
\par\noindent
a.e. in $\Omega$, where:
\begin{equation}\label{WNEQ1}
{\widehat W}(\cdot,x)\left|_{\ssy T_n}\right. :=
\tfrac{1}{\Delta{t}}\,\sum_{i=1}^{\ssy{\sf M}_{\star}}{\sf R}^n_i
\,\varepsilon_i(x)\quad\forall \,x\in D
\end{equation}
and
\begin{equation*}\label{WNEQ2}
{\sf R}^n_i:=\int_{\ssy T_n}\!\int_{\ssy D}\varepsilon_i(x)\;dW(t,x), \quad
i=1,\dots,{\sf M}_{\star},
\end{equation*}
for $n=1,\dots,{\sf N}_{\star}$.
\strut}
\par\noindent
The solution of the problem \eqref{AC2}, according to the standard
theory for parabolic problems (see, e.g, \cite{LMag}), has the
integral representation
\begin{equation}\label{HatUform}
\widehat{u}(t,x)= \int_0^t\!\!\!\int_{\ssy D} G(t-s;x,y)
\,{\widehat W}(s,y)\,dsdy
\quad\forall\,(t,x)\in[0,T]\times{\overline D}.
\end{equation}
\begin{remark}
Let $B^i(t):=\int_0^t\int_{\ssy D}\varepsilon_i(x)\;dW(s,x)$
for $t\ge0$ and $i\in{\mathbb N}$. According to \cite{Walsh86},
$(B^i)_{i\in{\mathbb N}}$ is a family of independent Brownian motions.
Thus, the random variables $\left(\left({\sf R}^n_i\right)_{n=1}^{\ssy {\sf N}_{\star}}\right)_{i\in{\mathbb N}}$
are independent and ${\sf R}^n_i\sim N(0,\Delta{t})$ for $i\in{\mathbb N}$ and $n=1,\dots,{\sf N}_{\star}$.
\end{remark}
\begin{remark}
The stochastic load ${\widehat W}$ in the right hand side of \eqref{AC2} corresponds
to a spectral-type representation of the space-time white noise.
We can, also, build up a numerical method for $u$ by using the approximate problem
proposed in Section~1.2 of \cite{KZ2010}, where 
the stochastic load ${\widehat W}$ is piecewise constant with respect to the time
variable but it is a discontinuous piecewise linear function with respect to the space variable. 
\end{remark}
%
%
\subsection{Crank-Nicolson fully discrete approximations}\label{CN_Main}
Let $M\in\nset$, $\dtau:=\tfrac{T}{M}$, $(\tau_m)_{m=0}^{\ssy M}$
be the nodes of a uniform partition of $[0,T]$ with width $\dtau$, i.e. 
$\tau_m:=m\,\dtau$ for $m=0,\dots,M$, and corresponding intervals
$\Delta_m:=(\tau_{m-1},\tau_m)$ for $m=1,\dots,M$.
For $p=2$ or $3$, let ${\sf S}_h^p\subset H^2(D)\cap H_0^1(D)$ be
a finite element space consisting of functions which are piecewise
polynomials of degree at most $p$ over a partition of $D$ in
intervals with maximum mesh-length $h$.
\par
The Crank-Nicolson finite element method to approximate the solution
${\widehat u}$ to the problem \eqref{AC2} is as follows:
\par
{\tt Step CN1}: Set
\begin{equation}\label{FullDE1}
U_h^0:=0.
\end{equation}
\par
{\tt Step CN2}: For $m=1,\dots,M$, find $U_h^m\in{\sf S}_h^p$ such that
\begin{equation}\label{FullDE2}
\left(U_h^m-U_h^{m-1},\chi\right)_{\ssy 0,D}
+\tfrac{\dtau}{2}\,{\mathcal B}\left(U_h^m+U_h^{m-1},\chi\right)
=\int_{\ssy\Delta_m} \left({\widehat W}(s,\cdot),\chi\right)_{\ssy 0,D}\,ds
\quad\forall\,\chi\in{\sf S}_h^p,
\end{equation}
where $(\cdot,\cdot)_{\ssy 0,D}$ is the usual $L^2(D)-$inner product
and ${\mathcal B}:H^2(D)\times H^2(D)\to{\mathbb R}$ is a bilinear form given
by
\begin{equation*}
{\mathcal B}(v_1,v_2):=(\partial_x^2v_1,\partial_x^2v_2)
\quad\forall\,v_1, v_2 \in H^2(D).
\end{equation*}
%
%
\subsection{Motivation, results and references}
%
%
\par
The recent research activity (see, e.g., \cite{YubinY03}, \cite{BinLi}, \cite{YubinY05}, \cite{Walsh05})
indicates that the Backward Euler finite element method, applied to the stochastic heat equation with
additive space-time white noise, has strong order of convergence equal to $\frac{1}{4}-\epsilon$
with respect to the time step $\Delta\tau$ and $\frac{1}{2}-\epsilon$ with respect to
maximum length $h$ of the subintervals of the partition used to construct the finite element spaces.
Both orders of convergence are optimal since they are consistent to the exponent of the
H{\"o}lder continuity property of the mild solution to the problem. 
The lack of smoothness for the mild solution is the reason that the strong order of convergence
of a numerical method that combines a high order time stepping with a finite
element space discretization, is expected to be equal to the strong order of convergence of the Backward
Euler finite element method.
However, the convergence analysis in \cite{Walsh05} provides a pessimistic strong error estimate 
for the Crank-Nicolson finite element method of the form
${\mathcal O}(\dtau\,h^{-\frac{3}{2}}+h^{\half})$, which introduces an uncertainty about the
convergence of the method when both $h$ and $\dtau$ freely tend to zero.
In addition, a bibliographical quest shows that the Crank-Nicolson method has been analyzed
in \cite{Erika02} and \cite{Erika03} under the assumption that the additive
space-time noise is smooth in space, while it is not among the time-discretization
methods analyzed in \cite{Printems} (see (3.10) in \cite{Printems}).
This unclear convergence behavior of the Crank-Nicolson
method, under the presence of an additive space-time white noise,
suggests a direction for further research.
%
%
%
\par
In the work at hand, we consider a different but similar problem,
the fourth order stochastic parabolic problem formulated in \eqref{PARAP},
motivated by the fact that its mild solution is one
of the components of the mild solution to the nonlinear stochastic Cahn-Hilliard
equation (see, e.g., \cite{DapDeb1}, \cite{Carolina1}).
%
%
%
We approximate the stochastic part $u$ of its mild solution $v$ by the
Crank-Nicolson finite element method formulated in Section~\ref{CN_Main},
for which we derive strong error estimates.
%
As a first step, we confirm that the solution ${\widehat u}$
to the approximate problem \eqref{AC2} is really an approximation of $u$
by estimating, in Theorem~\ref{BIG_Qewrhma1}
and in terms of ${\Delta t}$ and ${\sf M}_{\star}$, the difference $u-{\widehat u}$
in the $L^{\infty}_t(L^2_{\ssy P}(L^2_x))$ norm, arriving at the following
modeling error bound:
\begin{equation*}\label{Model_Error}
\max_{t\in[0,T]}\left[\,\int_{\ssy\Omega}\left(\int_{\ssy
D}|u(t,x)-{\widehat
u}(t,x)|^2\;dx\right)\,dP\,\right]^{\frac{1}{2}}\leq\,C\,\left(\,\delta^{-\half}
\,{\sf M}_{\star}^{-\frac{3}{2}+\delta}+{\Delta t}^{\frac{3}{8}}\,\right)
\quad\forall\,\delta\in\left(0,\tfrac{3}{2}\right]. 
\end{equation*}
Then, for  the Crank-Nicolson finite element approximations of ${\widehat u}$,
we derive (see Theorem~\ref{FFQEWR})  a discrete in time
$L^{\infty}_t(L^2_{\ssy P}(L^2_x))$ error estimate of the form:
%
%
%
%
\begin{equation*}\label{FDEstim4}
\max_{0\leq{m}\leq{\ssy M}}\left[\int_{\ssy\Omega}\left( \int_{\ssy
D}\big|U_h^m(x)-{\widehat
u}(\tau_m,x)\big|^2\;dx\right)dP\right]^{\frac{1}{2}}\leq\,C\,
\left(\,\epsilon_1^{-\half}\,\dtau^{\frac{3}{8}-\epsilon_1}
+\epsilon_2^{-\half}\,h^{\frac{p}{2}-\epsilon_2}\,\right)
\quad\forall\,\epsilon_1\in\left(0,\tfrac{3}{8}\right],
\quad\forall\,\epsilon_2\in\left(0,\tfrac{p}{2}\right].
\end{equation*}
%
%
%
The error estimate above, follows by estimating separately the
{\sl time discretization error} in Theorem~\ref{TimeDiscreteErr1}
and the {\sl space discretization error} in Theorem~\ref{Tigrakis}.
The definition of the aforementioned type of errors is made possible
by using the Crank-Nicolson time-discrete approximations
of ${\widehat u}$ introduced in Section~\ref{CN_Main_2}.
In particular, the {\sl time discretization error} is the approximation error
of the Crank-Nicolson time-discrete approximations and the
{\sl space discretization error} is the error between the Crank-Nicolson
fully discrete approximations and the Crank-Nicolson time discrete approximations.
In both cases, we use the Duhamel principle for the representation of the error
along with a low regularity nodal error estimate in a discrete in time $L^2_t(L^2_x)$
norm for modified Crank-Nicolson time discrete approximations of $w$ when
the {\sl time discretization} error is estimated (see Section~\ref{section3A})
and for modified Crank-Nicolson fully discrete approximations of $w$ when
the {\sl space discretization} error is estimated (see Section~\ref{SECTION44a}).
Roughly speaking, the error analysis for the Crank-Nicolson method
differs to that of the Backward Euler method, at the following points:
\par
$\diamond$ the numerical method that one has to analyze for the deterministic problem is 
a modification of the numerical method applied to the stochastic one
%
%
\par
and
\par
$\diamond$ the derivation of a low regularity  $L^2_t(L^2_x)$ nodal error estimate 
for the numerical method approximating the solution to the deterministic problem
is not a natural outcome of the stability properties of the method.
\par
The main outcome of the present work is that the strong order of convergence
of the Crank-Nicolson finite element method is equal to
the strong order of convergence of the Backward Euler finite element method,
which is due to the low regularity of $u$
(see, e.g., \cite{KZ2010}, \cite{LMes2011}).
Adapting properly the convergence analysis developed, we can improve the Crank-Nicolson
error estimate in \cite{Walsh05}, showing that the strong order of convergence of the
Crank-Nicolson finite element method applied to the stochastic heat equation with additive space-time white
noise is equal to the stong order of convergence of the Backward Euler finite
element method obtained in \cite{Walsh05} and \cite{YubinY05}.
Analogous result can be obtained for the linear fourth order problem \eqref{PARAP}
with additive derivative of a space-time white noise (cf. \cite{KZ2013b}), and
the two or three space dimension case of the
linear fourth order problem \eqref{PARAP} (cf. \cite{KZ2013a}).
\par
We close the section by a brief overview of the paper.
Section~\ref{SECTION_TWO} sets notation, recalls some
known results o\-ften used in the paper and introduce a usefull
projection operator.
Section~\ref{SECTION2} is dedicated to the estimation of the modeling error
$u-{\widehat u}$.
Section~\ref{SECTION3} defines the Crank-Nicolson time-discrete
approximations of ${\widehat u}$ and analyzes its convergence 
via the convergence analysis of modified Crank-Nicolson time-discrete
approximations of $w$.
%
%
Finally, Section~\ref{SECTION44} contains the error analysis for the
Crank-Nicolson fully-discrete approximations of ${\widehat u}$.
\section{Preliminaries}\label{SECTION_TWO}
We denote by $L^2(D)$ the space of the Lebesgue measurable
functions which are square integrable on $D$ with respect to
Lebesgue's measure $dx$, provided with the standard norm
$\|g\|_{\ssy 0,D}:= \left(\int_{\ssy D}|g(x)|^2\,dx\right)^{\half}$
for $g\in L^2(D)$. The standard inner product in $L^2(D)$ that
produces the norm $\|\cdot\|_{\ssy 0,D}$ is written as
$(\cdot,\cdot)_{\ssy 0,D}$, i.e., $(g_1,g_2)_{\ssy 0,D}
:=\int_{\ssy D}g_1(x)\,g_2(x)\,dx$ for $g_1$, $g_2\in L^2(D)$.
Let ${\mathbb N}_0$ be the set of the non negative integers.
Then, for $s\in\nset_0$, $H^s(D)$ will be the Sobolev space of functions
having generalized derivatives up to order $s$ in the space
$L^2(D)$, and by $\|\cdot\|_{\ssy s,D}$ its usual norm, i.e.
$\|g\|_{\ssy s,D}:=\left(\sum_{\ell=0}^s
\|\partial_x^{\ell}g\|_{\ssy 0,D}^2\right)^{\frac{1}{2}}$
for $g\in H^s(D)$. Also, by $H_0^1(D)$ we denote the subspace of $H^1(D)$
consisting of functions which vanish at the endpoints of $D$ in
the sense of trace.
%
%
%
\par
The sequence of pairs
$\left\{\left(\lambda_k^2,\varepsilon_k\right)\right\}_{k=1}^{\infty}$ is a
solution to the eigenvalue/eigenfunction problem: find nonzero
$\varphi\in H^2(D)\cap H_0^1(D)$ and $\sigma\in\rset$ such that
$-\varphi''=\sigma\,\varphi$ in $D$.
Since $(\varepsilon_k)_{k=1}^{\infty}$ is a complete
$(\cdot,\cdot)_{\ssy D}-$orthonormal system in $L^2(D)$, we define,
for $s\in\rset$, a subspace ${\mathcal V}^s(D)$ of $L^2(D)$ by
\begin{equation*}
{\mathcal V}^s(D):=\left\{g\in L^2(D):\quad\sum_{k=1}^{\infty}
\lambda_{k}^{2s}
\,(g,\varepsilon_k)^2_{\ssy 0,D}<\infty\,\right\}
\end{equation*}
provided with the norm
$\|g\|_{\ssy{\mathcal V}^s}:=\left(\,\sum_{k=1}^{\infty}
\lambda_{k}^{2s}\,(g,\varepsilon_k)^2_{\ssy
0,D}\,\right)^{\frac{1}{2}}$ for $g\in{\mathcal V}^s(D)$.
For $s\ge 0$, the pair $({\mathcal V}^s(D),\|\cdot\|_{\ssy{\mathcal V}^s})$ is a complete
subspace of $L^2(D)$ and we set
$({\bfdot H}^s(D),\|\cdot\|_{\ssy{\bfdot H}^s})
:=({\mathcal V}^s(D),\|\cdot\|_{\ssy{\mathcal V}^s})$.
For $s<0$, we define $({\bfdot H}^s(D),\|\cdot\|_{\ssy{\bfdot H}^s})$  as the completion of
$({\mathcal V}^s(D),\|\cdot\|_{\ssy{\mathcal V}^s})$, or, equivalently, as the dual of
 $({\bfdot H}^{-s}(D),\|\cdot\|_{\ssy{\bfdot H}^{-s}})$.
\par
Let $m\in\nset_0$. It is well-known (see \cite{Thomee}) that
\begin{equation}\label{dot_charact}
{\bfdot H}^m(D)=\big\{\,g\in H^m(D):
\quad\partial^{2i}_xg\left|_{\ssy\partial D}\right.=0
\quad\text{\rm if}\ken 0\leq{i}<\tfrac{m}{2}\,\big\}
\end{equation}
and there exist constants $C_{m,{\ssy A}}$ and $C_{m,{\ssy B}}$
such that
\begin{equation}\label{H_equiv}
C_{m,{\ssy A}}\,\|g\|_{\ssy m,D} \leq\|g\|_{\ssy{\bfdot H}^m}
\leq\,C_{m,{\ssy B}}\,\|g\|_{\ssy m,D}\quad \forall\,g\in{\bfdot
H}^m(D).
\end{equation}
Also, we define on $L^2(D)$ the negative norm $\|\cdot\|_{\ssy -m,
D}$ by
\begin{equation*}
\|g\|_{\ssy -m, D}:=\sup\Big\{ \tfrac{(g,\varphi)_{\ssy 0,D}}
{\|\varphi\|_{\ssy m,D}}:\quad \varphi\in{\bfdot H}^m(D)
\ken\text{\rm and}\ken\varphi\not=0\Big\} \quad\forall\,g\in
L^2(D),
\end{equation*}
for which, using \eqref{H_equiv}, it is easy to conclude that
there exists a constant $C_{-m}>0$ such that
\begin{equation}\label{minus_equiv}
\|g\|_{\ssy -m,D}\leq\,C_{-m}\,\|g\|_{{\bfdot H}^{-m}}
\quad\forall\,g\in L^2(D).
\end{equation}
\par
Let ${\mathbb L}_2=(L^2(D),(\cdot,\cdot)_{\ssy 0,D})$ and
${\mathcal L}({\mathbb L}_2)$ be the space of linear, bounded
operators from ${\mathbb L}_2$ to ${\mathbb L}_2$. We say that, an
operator $\Gamma\in {\mathcal L}({\mathbb L}_2)$ is 
Hilbert-Schmidt, when $\|\Gamma\|_{\ssy\rm
HS}:=\left(\sum_{k=1}^{\infty} \|\Gamma\varepsilon_k\|^2_{\ssy
0,D}\right)^{\half}<+\infty$, where $\|\Gamma\|_{\ssy\rm HS}$ is
the so called Hilbert-Schmidt norm of $\Gamma$.
%
%
We note that the quantity $\|\Gamma\|_{\ssy\rm HS}$ does not
change when we replace $(\varepsilon_k)_{k=1}^{\infty}$ by
another complete orthonormal system of ${\mathbb L}_2$.
It is well known (see, e.g., \cite{DunSch}) that an operator
$\Gamma\in{\mathcal L}({\mathbb L}_2)$ is Hilbert-Schmidt iff
there exists a measurable function $\gamma_{\star}:D\times D\rightarrow{\mathbb
R}$ such that $\Gamma[v](\cdot)=\int_{\ssy D}\gamma_{\star}(\cdot,y)\,v(y)\,dy$
for $v\in L^2(D)$, and then, it holds that
%
%
\begin{equation}\label{HSxar}
\|\Gamma\|_{\ssy\rm HS} =\left(\int_{\ssy D}\!\int_{\ssy
D}\gamma_{\star}^2(x,y)\,dxdy\right)^{\half}.
\end{equation}
Let ${\mathcal L}_{\ssy\rm HS}({\mathbb L}_2)$ be the set of
Hilbert Schmidt operators of ${\mathcal L}({\mathbb L}^2)$ and
$\Phi:[0,T]\rightarrow {\mathcal L}_{\ssy\rm HS}({\mathbb L}_2)$.
Also, for a random variable $X$, let ${\mathbb E}[X]$ be its
expected value, i.e., ${\mathbb E}[X]:=\int_{\ssy\Omega}X\,dP$.
Then, the It{\^o} isometry property for stochastic integrals,
which we will use often in the paper, reads
\begin{equation}\label{Ito_Isom}
{\mathbb E}\left[\Big\|\int_0^{\ssy T}\Phi\,dW\Big\|_{\ssy 0,D}^2\right]
=\int_0^{\ssy T}\|\Phi(t)\|_{\ssy\rm HS}^2\,dt.
\end{equation}
\par
We recall that: if $c_{\star}>0$, then
\begin{equation}\label{SR_BOUND}
\sum_{k=1}^{\infty}\lambda_k^{-(1+c_{\star}\delta)}
\leq\,\left(\tfrac{1+2c_{\star}}{c_{\star}\pi}\right)\,\tfrac{1}{\delta}
\quad\forall\,\delta\in(0,2],
\end{equation}
and if $({\mathcal H},(\cdot,\cdot)_{\ssy{\mathcal H}})$ is a
real inner product space, then
\begin{equation}\label{innerproduct}
(g-v,g)_{\ssy{\mathcal H}}
\ge\,\tfrac{1}{2}\,\left[\,(g,g)_{\ssy{\mathcal H}}
-(v,v)_{\ssy{\mathcal H}}\,\right]\quad\forall\,g,v\in{\mathcal H}.
\end{equation}
\par
For a nonempty set $A\subset[0,T]$, we will denote by ${\mathfrak X}_{\ssy A}:[0,T]\rightarrow\{0,1\}$
the indicator function of $A$. Also, for any $L\in{\mathbb N}$ and 
functions $(v^{\ell})_{\ell=0}^{\ssy L}\subset L^2(D)$ we define
$v^{\ell-\half}:=\tfrac{1}{2}(v^{\ell}+v^{\ell-1})$
for $\ell=1,\dots,L$. Finally, for $\alpha\in[0,1]$ and for $n=0,\dots,M-1$, 
we define $\tau_{n+\alpha}:=\tau_n+\alpha\,\dtau$. 
%
\subsection{A projection operator}
Let ${\mathfrak O}:=(0,T)\times D$, ${\mathfrak L}$ be a finite dimensional
subspace of $L^2({\mathfrak O})$ defined by  
\begin{equation*}
{\mathfrak L}:=\left\{\psi\in L^2({\mathfrak O}):\quad \exists (a^n)_{n=1}^{\ssy{\sf N}_{\star}}
\subset{\mathbb R}^{\ssy{{\sf M}_{\star}}}
\,\,\,\text{\rm s.t.}\,\,\,\psi(t,x)=\sum_{i=1}^{\ssy{\sf M}_{\star}}a^n_i\varepsilon_i(x)
\quad\forall\,(t,x)\in T_n\times D,\quad\, n=1,\dots,{\sf N}_{\star}\right\}
\end{equation*}
%
%
and $\Pi:L^2({\mathfrak O})\rightarrow{\mathfrak L}$ be the
$L^2({\mathfrak O})-$projection operator onto ${\mathfrak L}$ 
which is defined by requiring
\begin{equation*}
\int_0^{\ssy T}\!\int_{\ssy D}\Pi(g;t,x)\,\varphi(t,x)\;dtdx
=\int_0^{\ssy T}\!\int_{\ssy D}g(t,x)\,\varphi(t,x)\;dtdx
\quad\forall\,\varphi\in{\mathcal H},
\quad\forall\,g\in L^2({\mathfrak O}).
\end{equation*}
Then, we have
\begin{equation}\label{KatiL2}
\int_0^{\ssy T}\!\int_{\ssy D}(\Pi(g;t,x))^2\;dtdx\leq\int_0^{\ssy T}\!\int_{\ssy D}(g(t,x))^2\;dtdx
\quad\forall\,g\in L^2({\mathfrak O})
\end{equation}
and, after using a typical set of basis function for ${\mathfrak L}$, we, easily, conclude that
\begin{equation}\label{Defin_L2}
\Pi(g;t,x)=\tfrac{1}{\Delta t}\, \sum_{i=1}^{\ssy {\sf M}_{\star}}
\left(\int_{\ssy T_n}(g(s,\cdot),\varepsilon_i)_{\ssy 0,D}\;ds\right)\,\varepsilon_i(x)
\quad\forall(t,x)\in T_n\times D,
\,\,\, n=1,\dots,{\sf N}_{\star},
\,\,\,\forall\,g\in L^2({\mathfrak O}).
\end{equation}
\par
In the lemma below, we show a representation of the stochastic integral
of the projection $\Pi$ of a deterministic function $g\in L^2({\mathfrak O})$ as an
$L^2({\mathfrak O})-$inner product of $g$ with the random function
${\widehat W}$ defined in Section~\ref{maimou_problem}.
%
%
%
\begin{lemma}\label{Lhmma1}
Let ${\widehat W}$ be the random function defined in \eqref{WNEQ1}.
Then, it holds that
\begin{equation}\label{WNEQ2}
\int_0^{\ssy T}\!\!\!\int_{\ssy D}\,\Pi(g;s,y)\,dW(s,y)
=\int_0^{\ssy T}\!\!\!\int_{\ssy D}\,{\widehat W}(\tau,x)\,g(\tau,x)\;{d\tau}dx
\quad\forall\,g\in L^2({\mathfrak O}).
\end{equation}
\end{lemma}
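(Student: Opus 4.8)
The plan is to establish the identity by direct computation, showing that both sides reduce to one and the same finite double sum. The key structural fact is that both $\Pi(g;\cdot,\cdot)$ and ${\widehat W}$ are piecewise constant in time with respect to the partition $(T_n)_{n=1}^{\ssy{\sf N}_{\star}}$. Accordingly, I would first split both the stochastic integral on the left and the deterministic double integral on the right as sums over the subintervals $T_n$, reducing everything to a local computation on each $T_n$.

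For the left-hand side, I would substitute the explicit representation \eqref{Defin_L2} of $\Pi(g;\cdot,\cdot)$, which on $T_n\times D$ equals $\tfrac{1}{\Delta t}\sum_{i=1}^{\ssy{\sf M}_{\star}}c_i^n\,\varepsilon_i$ with deterministic coefficients $c_i^n:=\int_{\ssy T_n}(g(s,\cdot),\varepsilon_i)_{\ssy 0,D}\,ds$. Using linearity of the It{\^o} stochastic integral to pull these deterministic coefficients out, the remaining integral $\int_{\ssy T_n}\int_{\ssy D}\varepsilon_i(y)\,dW(s,y)$ is exactly ${\sf R}_i^n$ by definition, so that
\begin{equation*}
\int_0^{\ssy T}\!\!\int_{\ssy D}\Pi(g;s,y)\,dW(s,y)
=\tfrac{1}{\Delta t}\sum_{n=1}^{\ssy{\sf N}_{\star}}\sum_{i=1}^{\ssy{\sf M}_{\star}}
{\sf R}_i^n\int_{\ssy T_n}(g(s,\cdot),\varepsilon_i)_{\ssy 0,D}\,ds.
\end{equation*}

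For the right-hand side, I would substitute the definition \eqref{WNEQ1} of ${\widehat W}$ on each $T_n$ and apply Fubini's theorem to the purely deterministic double integral, recognizing $\int_{\ssy T_n}\int_{\ssy D}\varepsilon_i(x)\,g(\tau,x)\,dx\,d\tau=\int_{\ssy T_n}(g(\tau,\cdot),\varepsilon_i)_{\ssy 0,D}\,d\tau$; this yields precisely the same double sum, which closes the argument. The computation is elementary throughout, so the only point requiring a word of justification is the use of linearity of the stochastic integral against a finite sum of time-piecewise-constant deterministic integrands; since the sums are finite and the coefficients $c_i^n$ are (deterministic) constants on each $T_n$, this is immediate from the elementary properties of the It{\^o} integral for simple integrands, and I expect no genuine obstacle here.
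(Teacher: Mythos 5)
Your proposal is correct and follows essentially the same route as the paper's proof: substitute the explicit formula \eqref{Defin_L2} into the stochastic integral, use linearity to pull out the deterministic coefficients and recognize the random variables ${\sf R}^n_i$, then identify the resulting finite double sum with the right-hand side via the definition \eqref{WNEQ1} of ${\widehat W}$. The only cosmetic difference is that you compute both sides separately and match them to a common double sum, whereas the paper transforms the left-hand side into the right-hand side through a single chain of equalities.
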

%
%
%
%
%
\begin{proof}
Using \eqref{Defin_L2} and \eqref{WNEQ1}, we have
\begin{equation*}
\begin{split}
\int_0^{\ssy T}\!\!\!\int_{\ssy D}\Pi(g;s,y)\,dW(s,y)=&\,
\tfrac{1}{\Delta{t}}\,\sum_{n=1}^{\ssy{\sf N}_{\star}}\int_{\ssy T_n}\!\int_{\ssy D}\left[
\,\sum_{i=1}^{\ssy {\sf M}_{\star}}
\left(\int_{\ssy T_n}(g(\tau,\cdot),\varepsilon_i)_{\ssy 0,D}\;d\tau\right)\,
\varepsilon_i(y)\,\right]\;dW(s,y)\\
=&\,
\tfrac{1}{\Delta{t}}\,\sum_{n=1}^{\ssy{\sf N}_{\star}}
\sum_{i=1}^{\ssy {\sf M}_{\star}}
\left(\int_{\ssy T_n}(g(\tau,\cdot),\varepsilon_i)_{\ssy 0,D}\;d\tau\right)\,{\sf R}^n_i\\
=&\,
\tfrac{1}{\Delta{t}}\,\sum_{n=1}^{\ssy{\sf N}_{\star}}
\sum_{i=1}^{\ssy {\sf M}_{\star}}\left(
\int_{\ssy T_n}\!\int_{\ssy D}g(\tau,x)
\,{\sf R}^n_i\,\varepsilon_i(x)\;dxd\tau\right)\\
=&\,
\tfrac{1}{\Delta{t}}\,\sum_{n=1}^{\ssy{\sf N}_{\star}}\left(
\int_{\ssy T_n}\!\int_{\ssy D}
g(\tau,x)\,\left(\,\sum_{i=1}^{\ssy{\sf M}_{\star}}
\,{\sf R}^n_i\,\varepsilon_i(x)\,\right)\;dxd\tau\right)\\
=&\,\sum_{n=1}^{\ssy{\sf N}_{\star}}
\int_{\ssy T_n}\!\int_{\ssy D}
g(\tau,x)\,{\widehat W}(\tau,x)\;dxd\tau\\
=&\int_0^{\ssy T}\!\!\!\int_{\ssy D}g(\tau,x) \,{\widehat
W}(\tau,x)\,dtdx.
\end{split}
\end{equation*}
\end{proof}
%
%
%
\subsection{Linear elliptic and parabolic operators}\label{SECTION_III}
We denote by $T_{\ssy E}:L^2(D)\rightarrow{\bfdot H}^2(D)$ the solution
operator of the Dirichlet two-point boundary value problem: for given $f\in L^2(D)$ find $v_{\ssy E}\in {\bfdot
H}^2(D)$ such that
\begin{equation}\label{ElOp1}
v_{\ssy E}''=f\quad\text{\rm in}\ken D,
\end{equation}
i.e. $T_{\ssy E}f:=v_{\ssy E}$. Also, by
$T_{\ssy B}:L^2(D)\rightarrow{\bfdot H}^4(D)$ we denote the solution operator
of the Dirichlet biharmonic two-point boundary value problem: for given $f\in L^2(D)$
find $v_{\ssy B}\in {\bfdot H}^4(D)$ such that
\begin{equation}\label{ElOp2}
v_{\ssy B}''''=f\quad\text{\rm in}\ken D,
\end{equation}
i.e. $T_{\ssy B}f:=v_{\ssy B}$. 
Due to the type of boundary conditions of \eqref{ElOp2}, we
conclude that
\begin{equation}\label{tiger007}
T_{\ssy B}f= T_{\ssy E}^2f\quad\forall\,f\in L^2(D),
\end{equation}
which, easily, yields
\begin{equation}\label{TB-prop1}
(T_{\ssy B}v_1,v_2)_{\ssy 0,D} =(T_{\ssy E}v_1,T_{\ssy
E}v_2)_{\ssy 0,D}=(v_1,T_{\ssy B}v_2)_{\ssy 0,D}
\quad\forall\,v_1,v_2\in L^2(D).
\end{equation}
\par
It is well-known that the inverse elliptic operators
$T_{\ssy E}$ and $T_{\ssy B}$ satisfy the following
inequalities:
\begin{equation}\label{ElReg1}
\|T_{\ssy E}f\|_{\ssy m,D}\leq \,C_{\ssy E} \,\|f\|_{\ssy m-2, D}
\quad\forall\,f\in H^{\max\{0,m-2\}}(D), \ken\forall\,m\in{\mathbb
N}_0
\end{equation}
and
\begin{equation}\label{ElBihar2}
\|T_{\ssy B}f\|_{\ssy m,D}\leq \,C_{\ssy B}\,\|f\|_{\ssy m-4, D}
\quad\forall\,f\in H^{\max\{0,m-4\}}(D),
\quad\forall\,m\in{\mathbb N}_0,
\end{equation}
where the nonnegative constants $C_{\ssy E}$ and $C_{\ssy B}$ depend only on $D$. 
%
\par
Let $\left({\mathcal S}(t)w_0\right)_{t\in[0,T]}$ be the standard semigroup
notation for the solution $w$ of \eqref{Det_Parab}. 
For $\ell\in{\mathbb N}_0$, $\beta\ge0$, $r\ge0$ and
$q\in[0,r+4\ell]$ there exists a constant $C_{r,q,\ell}>0$
(see, e.g., Appendix A in \cite{KZ2008}, \cite{Thomee}, \cite{Pazy}) 
such that
\begin{equation}\label{Reggo0}
\big\|\partial_t^{\ell}{\mathcal S}(t)w_0\big\|_{\ssy {\bfdot
H}^r} \leq \,C_{r,q,\ell}\,\,\,t^{-\frac{r-q}{4}-\ell}
\,\,\,\|w_0\|_{\ssy {\bfdot
H}^q}\quad\forall\,t>0,\ken\forall\,w_0\in{\bfdot H}^q(D),
\end{equation}
and a constant $C_{\beta}>0$ such that
\begin{equation}\label{Reggo3}
\int_{t_a}^{t_b}(\tau-t_a)^{\beta}\,
\big\|\partial_t^{\ell}{\mathcal S}(\tau)w_0 \big\|_{\ssy {\bfdot
H}^r}^2\,d\tau \leq\,C_{\beta}\, \|w_0\|^2_{\ssy {\bfdot
H}^{r+4\ell-2\beta-2}} \quad\forall\,t_b>t_a\ge0,
\ken\forall\,w_0\in{\bfdot H}^{r+4\ell-2\beta-2}(D).
\end{equation}
%
\subsection{Discrete operators}\label{SECTION41}
%
%
Let $p=2$ or $3$, and ${\sf S}_h^p\subset H^2(D)\cap H_0^1(D)$
be a finite element space consisting of functions which are piecewise
polynomials of degree at most $p$ over a partition of $D$ in
intervals with maximum mesh-length $h$.
It is well-known (see, e.g., \cite{Cia}, \cite{BrScott}) that the following approximation
property holds: there exists a constant $C_{{\ssy{\sf FM}},p}>0$ such that
\begin{equation}\label{pin0}
\inf_{\chi\in{\sf S}_h^p}\|v-\chi\|_{\ssy 2,D}
\leq\,C_{{\ssy{\sf  FM}},p}\,h^{\ell-2}\,\|v\|_{\ssy \ell,D}
\quad\,\forall\,v\in H^{\ell}(D)\cap H_0^1(D), \quad \ell=3,\dots,p+1.
\end{equation}
Then, we define the discrete biharmonic operator
$B_h:{\sf S}_h^p\to{{\sf S}_h^p}$ by
$(B_h\varphi,\chi)_{\ssy 0,D} =(\partial_x^2\varphi,\partial_x^2\chi)_{\ssy 0,D}$
for $\varphi,\chi\in{\sf S}_h^p$,
the $L^2(D)-$projection operator $P_h:L^2(D)\to{\sf S}_h^p$ by
$(P_hf,\chi)_{\ssy 0,D}=(f,\chi)_{\ssy 0,D}$ for $\chi\in{\sf S}_h^p$
and $f\in L^2(D)$,
and the standard Galerkin finite element approximation
$v_{\ssy B,h}\in{\sf S}_h^p$ of the solution $v_{\ssy B}$
of \eqref{ElOp2} by requiring
\begin{equation}\label{fem2}
B_hv_{\ssy B,h}=P_hf.
\end{equation}
Letting  $T_{\ssy B,h}:L^2(D)\to{\sf S}^p_h$ be the solution operator of
the finite element method \eqref{fem2}, i.e.,
\begin{equation*}
T_{\ssy B,h}f:=v_{\ssy B,h}=B_h^{-1}P_hf
\quad\forall\,f\in L^2(D),
\end{equation*}
we can easily conclude that
\begin{equation}\label{adjo2}
(T_{\ssy B,h}f,g)_{\ssy 0,D}=\left(\partial_x^2(T_{\ssy B,h}f),
\partial_x^2(T_{\ssy B,h}g)\right)_{\ssy 0,D}
\quad\forall\,f,g\in L^2(D).
\end{equation}
Also, using the approximation property \eqref{pin0} of the finite element space
${\sf S}_h^p$, we can prove (see, e.g., Proposition 2.2 in \cite{KZ2010})
the following $L^2(D)-$error estimate for the finite element method \eqref{fem2}:
%
%
\begin{equation}\label{ARA1}
\|T_{\ssy B}f-T_{\ssy B,h}f\|_{\ssy 0,D}
\leq\,C\,h^p\,\|f||_{\ssy -1,D}\quad\forall\,f\in L^2(D).
\end{equation}
%
%
%
%
Observing that the Galerkin orthogonality property reads
\begin{equation*}
{\mathcal B}\left(T_{\ssy B}f-T_{\ssy B,h}f,\chi\right)_{\ssy 0,D}=0
\quad\forall\,\chi\in{\sf S}_h^p,\quad\forall\,f\in L^2(D),
\end{equation*}
after setting $\chi=T_{\ssy B,h}f$ and using the Cauchy-Schwarz inequality
along with \eqref{ElBihar2}, we get
\begin{equation}\label{TB_bound}
\begin{split}
\|\partial_x^2(T_{\ssy B,h}f)\|\leq&\,\|\partial_x^2(T_{\ssy B}f)\|_{\ssy 0,D}\\
\leq&\,C\,\|f\|_{\ssy -2,D}\quad\forall\,f\in L^2(D).\\
\end{split}
\end{equation}
%
%
%
%
%
%
\section{An estimate of the error $u-{\widehat u}$}\label{SECTION2}
In the theorem below, we derive an $L^{\infty}_t(L^2_{\ssy
P}(L^2_x))$ bound for the difference $u-{\widehat u}$ in terms of $\Delta{t}$
and ${\sf M}_{\star}$.
%
%
%
%
\begin{theorem}\label{BIG_Qewrhma1}
Let $u$ be the stochastic function defined by \eqref{MildSol} and
${\widehat u}$ be the solution of \eqref{AC2}.
Then, there exists a constant $C>0$, independent
of $\Delta{t}$ and ${\sf M}_{\star}$, such that
\begin{equation}\label{ModelError}
\max_{\ssy t\in[0,T]}\left(\,{\mathbb E}\left[\|u(t,\cdot)
-{\widehat u}(t,\cdot)\|_{\ssy 0,D}^2\right]\,\right)^{\half}
\leq\,C\,\left(\,\Delta{t}^\frac{3}{8}+\delta^{-\half}
\,{\sf M}_{\star}^{-\frac{3}{2}+\delta}\,\right)
\quad\forall\,\delta\in\left(0,\tfrac{3}{2}\right].
\end{equation}
\end{theorem}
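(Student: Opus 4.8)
The plan is to diagonalize the problem in the orthonormal eigenbasis $(\varepsilon_k)_{k\ge1}$ and reduce everything to scalar stochastic integrals. Writing $B^k(t):=\int_0^t\!\int_{\ssy D}\varepsilon_k\,dW$, which are independent Brownian motions by the Remark, I would insert \eqref{GreenKernel} into \eqref{MildSol} and use $\int_{\ssy D}\varepsilon_k\varepsilon_j=\delta_{kj}$ to get $u(t,x)=\sum_{k\ge1}\varepsilon_k(x)\,u_k(t)$ with $u_k(t)=\int_0^t e^{-\lambda_k^4(t-\sigma)}\,dB^k(\sigma)$. Inserting \eqref{GreenKernel} and \eqref{WNEQ1} into \eqref{HatUform} and again using orthonormality collapses the spatial integral to the first ${\sf M}_{\star}$ modes, giving $\widehat u(t,x)=\sum_{k=1}^{\ssy{\sf M}_{\star}}\varepsilon_k(x)\,\widehat u_k(t)$, where on each subinterval $T_n\subset[0,t]$ the kernel $g_k(t,\sigma):=e^{-\lambda_k^4(t-\sigma)}$ is replaced by its time-average $\widehat g_k(t,\sigma):=\tfrac{1}{\Delta t}\int_{\ssy T_n}e^{-\lambda_k^4(t-s)}\,ds$ integrated against $R^n_k=\int_{\ssy T_n}dB^k$. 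Thus $\widehat u_k$ is the stochastic integral of the $L^2$-in-time projection of $g_k(t,\cdot)$ onto the partition $(T_n)$.

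By the It{\^o} isometry \eqref{Ito_Isom}, the orthonormality of $(\varepsilon_k)$ and the independence of the $B^k$, the cross terms vanish and ${\mathbb E}[\|u(t,\cdot)-\widehat u(t,\cdot)\|_{\ssy 0,D}^2]=\sum_{k\ge1}{\mathbb E}|u_k(t)-\widehat u_k(t)|^2$, each summand a deterministic time integral of a squared kernel difference. I would split this into a spectral-truncation tail $\sum_{k>{\sf M}_{\star}}$ and a time-discretization head $\sum_{k=1}^{\ssy{\sf M}_{\star}}$. For the tail only $u_k$ survives, so its contribution is $\sum_{k>{\sf M}_{\star}}\int_0^t e^{-2\lambda_k^4(t-\sigma)}\,d\sigma\le\tfrac12\sum_{k>{\sf M}_{\star}}\lambda_k^{-4}$; writing $\lambda_k^{-4}=\lambda_k^{-(1+2\delta)}\lambda_k^{-(3-2\delta)}\le\lambda_{{\sf M}_{\star}}^{-(3-2\delta)}\lambda_k^{-(1+2\delta)}$ for $k>{\sf M}_{\star}$ and $\delta\in(0,\tfrac32]$, and summing the surviving factor by \eqref{SR_BOUND} with $c_{\star}=2$, yields a bound $C\,\delta^{-1}{\sf M}_{\star}^{-3+2\delta}$, i.e. $C\,\delta^{-\half}{\sf M}_{\star}^{-\frac32+\delta}$ after the square root, which is the second term of \eqref{ModelError}.

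For the head I would establish two per-mode estimates for $e_k:=\int_0^t[g_k(t,\cdot)-\widehat g_k(t,\cdot)]^2$: a smoothing bound $e_k\le C\,\Delta t^2\,\lambda_k^4$, obtained from the Poincar\'e--Wirtinger inequality on each $T_n$ applied to $\partial_\sigma g_k=\lambda_k^4 g_k$ together with $\int_0^t g_k^2\le\tfrac{1}{2\lambda_k^4}$; and a stability bound $e_k\le\tfrac{1}{2\lambda_k^4}$, valid because $\widehat g_k$ is the $L^2$-in-time projection of $g_k$, so $\|g_k-\widehat g_k\|\le\|g_k\|$. Splitting the head at the crossover index $K\sim\Delta t^{-1/4}$ where $\lambda_K^4\sim\Delta t^{-1}$, and using the smoothing bound for $k\le K$ (so $\Delta t^2\sum_{k\le K}\lambda_k^4\sim\Delta t^2 K^5\sim\Delta t^{3/4}$) and the stability bound for $k>K$ (so $\sum_{k>K}\lambda_k^{-4}\sim K^{-3}\sim\Delta t^{3/4}$), gives $\sum_{k=1}^{\ssy{\sf M}_{\star}}e_k\le C\,\Delta t^{3/4}$, hence $C\,\Delta t^{3/8}$ after the square root, the first term of \eqref{ModelError}. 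All bounds are uniform in $t\in[0,T]$, which controls the maximum.

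I expect the main obstacle to be precisely this head sum: neither per-mode bound suffices alone, since the smoothing bound sums to $\Delta t^2\sum_{k\le{\sf M}_{\star}}\lambda_k^4$ and diverges as ${\sf M}_{\star}\to\infty$, whereas the stability bound carries no power of $\Delta t$; the fractional order $\tfrac38$ emerges only from the balance at $\lambda_k^4\sim\Delta t^{-1}$ (and the split is harmless when ${\sf M}_{\star}<K$). A second, more technical point is the last, possibly partial, subinterval $(t_{n-1},t_n)$ containing $t$: since $\widehat W$ there uses the full increment $R^n_k$ over $(t_{n-1},t_n)$ while \eqref{HatUform} integrates $s$ only up to $t$, the representation is not adapted at $t$ and produces an extra future-increment term $\int_t^{t_n}\widehat g_k^2$; I would bound the partial average $\widehat g_k$ there by $\min\{1,(\lambda_k^4\Delta t)^{-1}\}$ and control this term by the same crossover splitting, once more obtaining $\Delta t^{3/4}$.
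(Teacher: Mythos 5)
Your proposal is correct, and its skeleton coincides with the paper's proof of Theorem~\ref{BIG_Qewrhma1}: both pass through the It{\^o} isometry to a deterministic $L^2$-bound on kernel differences, split off a spectral-truncation term that is estimated exactly as your tail (the paper's ${\sf Z}_{\ssy B}$ in \eqref{ZbZb_bound} reduces, after a Cauchy--Schwarz step, to $\tfrac{1}{2}\sum_{i>{\sf M}_{\star}}\lambda_i^{-4}$ and then to $C\,\delta^{-\half}\,{\sf M}_{\star}^{-\frac{3}{2}+\delta}$ via \eqref{SR_BOUND}), and are left with a time-averaging term of order $\Delta t^{3/8}$. There are two differences worth recording. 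First, a cosmetic one: the paper truncates the time-averaged kernel rather than the exact one, so its pieces ${\sf Z}_{\ssy A}$, ${\sf Z}_{\ssy B}$ are not literally your head and tail, but the resulting estimates are identical. Second, and more substantively: the paper does not actually prove the $\Delta t^{3/8}$ bound here --- it writes ${\sf Z}_{\ssy A}$ as a sum over \emph{all} modes of per-mode averaging errors and then invokes the proof of Theorem~3.1 in \cite{KZ2010} --- whereas you supply a self-contained argument, namely the smoothing bound $C\,\Delta t^2\lambda_k^4$, the projection-stability bound $(2\lambda_k^4)^{-1}$, and the balance of the two at the crossover $\lambda_{\ssy K}^4\sim\Delta t^{-1}$; this is precisely the mechanism that produces the exponent $\tfrac{3}{8}$, so your write-up makes explicit what the paper outsources. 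Finally, your concern about the partial subinterval containing $t$ is legitimate but can be streamlined: because the averaged kernel \eqref{frak_G} carries the indicator ${\mathfrak X}_{(0,t)}$ \emph{inside} the average, your ${\widehat g}_k$ is the genuine $L^2(T_n)$-projection of ${\mathfrak X}_{(0,t)}g_k$ on every subinterval, including the one containing $t$; hence the stability bound already covers the ``future-increment'' term $\int_t^{t_n}{\widehat g}_k^{\,2}$, and only the smoothing bound needs the separate treatment of that one interval (where the jump of ${\mathfrak X}_{(0,t)}g_k$ rules out Poincar\'e--Wirtinger), which your $\min\{1,(\lambda_k^4\Delta t)^{-1}\}$ estimate handles correctly.
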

%
%
%
%
%
%
\begin{proof}
%
%
%
%
%
Let ${\sf Z}(t):=\left({\mathbb E}\left[\|u(t,\cdot)-{\widehat u}(t,\cdot)\|_{\ssy 0,D}^2
\right]\right)^{\half}$ for $t\in[0,T]$. We will get \eqref{ModelError} working with the 
representations \eqref{MildSol} and \eqref{HatUform}. In the sequel,  we will use the symbol
$C$ to denote a generic constant that is independent of $\Delta{t}$ and ${\sf M}_{\star}$
and may changes value from one line to the other.
\par
Using \eqref{MildSol}, \eqref{HatUform}, \eqref{WNEQ2}
and \eqref{Defin_L2}, we conclude that
\begin{equation}\label{corv0}
u(t,x)-{\widehat u}(t,x)=\int_0^{\ssy T}\!\!\!\int_{\ssy D} \big[{\mathfrak
X}_{(0,t)}(s)\,G(t-s;x,y) -{\mathfrak G}(t,x;s,y)\big]\,dW(s,y)
\quad\forall\,(t,x)\in(0,T]\times D,
\end{equation}
where ${\mathfrak G}:(0,T]\times D\rightarrow L^2({\mathfrak O})$
given by
\begin{equation}\label{frak_G}
{\mathfrak G}(t,x;s,y)
:=\tfrac{1}{\Delta{t}}\,\sum_{i=1}^{\ssy {\sf M}_{\star}}
\left[\,\int_{\ssy T_n}{\mathfrak X}_{(0,t)}(s')\left(\int_{\ssy D}
\,G(t-s';x,y')\,\varepsilon_i(y')\;dy'\right)ds'\,\right]
\,\varepsilon_i(y)\quad\forall\,(s,y)\in T_n\times D
\end{equation}
for $(t,x)\in(0,T]\times D$ and $n=1,\dots,{\sf N}_{\star}$.
Thus, using \eqref{corv0} and \eqref{Ito_Isom}, we obtain
\begin{equation*}
{\sf Z}(t)=\left(\int_0^{\ssy T}\left(\int_{\ssy D}\int_{\ssy D}
 \big[{\mathfrak
X}_{(0,t)}(s)\,G(t-s;x,y) -{\mathfrak G}(t,x;s,y)\big]^2\;dxdy\right)ds\right)^{\frac{1}{2}}
\quad\forall\,t\in(0,T].
\end{equation*}
%
%
Now, we introduce the splitting
\begin{equation}\label{corv1}
{\sf Z}(t)\leq\,{\sf Z}_{\ssy A}(t)
+{\sf Z}_{\ssy B}(t)\quad\forall\,t\in(0,T],
\end{equation}
where
\begin{equation}\label{ZaZa}
{\sf Z}_{\ssy A}(t):=\left\{
\sum_{n=1}^{{\sf N}_{\star}}\int_{\ssy D}\!\int_{\ssy D}\!\int_{\ssy T_n}
\left[{\mathfrak X}_{(0,t)}(s)\,G(t-s;x,y)-\tfrac{1}{\Delta{t}}\,\int_{\ssy T_n}
{\mathfrak X}_{(0,t)}(s')\,G(t-s';x,y)ds'\right]^2\;dxdyds\right\}^{\frac{1}{2}}
\end{equation}
and
\begin{equation}\label{ZbZb}
{\sf Z}_{\ssy B}(t):=\left\{
\sum_{n=1}^{{\sf N}_{\star}}
\int_{\ssy D}\int_{\ssy D}\int_{\ssy T_n}\left[
\tfrac{1}{\Delta{t}}\int_{\ssy T_n}{\mathfrak X}_{(0,t)}(s')\,G(t-s';x,y)ds'
-{\mathfrak G}(t,x;s,y)\right]^2dxdyds
\right\}^{\frac{1}{2}}.
\end{equation}
Using \eqref{frak_G} and the $L^2(D)-$orthogonality of
$(\varepsilon_k)_{k=1}^{\infty}$  we obtain
\begin{equation}\label{X_Nov_2015}
{\mathfrak G}(t,x;s,y)
=\tfrac{1}{\Delta{t}}\,
\int_{\ssy T_n}{\mathfrak X}_{(0,t)}(s')\left[\,
\sum_{i=1}^{\ssy{\sf M}_{\star}}e^{-\lambda_i^4(t-s')}
\varepsilon_i(x)\,\varepsilon_i(y)\right]ds'\quad\forall\,(s,y)\in T_n\times D
\end{equation}
for $(t,x)\in(0,T]\times D$ and $n=1,\dots,{\sf N}_{\star}$.
Next, we combine \eqref{ZbZb} and \eqref{X_Nov_2015}
and use, again,
the $L^2(D)-$orthogonality of $(\varepsilon_k)_{k=1}^{\infty}$
to get
\begin{equation*}
\begin{split}
{\sf Z}_{\ssy B}(t)=&\,\left\{\tfrac{1}{\Delta{t}}\,
\sum_{n=1}^{{\sf N}_{\star}}
\int_{\ssy D}\int_{\ssy D}\left[
\int_{\ssy T_n}{\mathfrak X}_{(0,t)}(s')
\left(\,G(t-s';x,y)-\sum_{i=1}^{\ssy{\sf M}_{\star}}e^{-\lambda_i^4(t-s')}
\varepsilon_i(x)\,\varepsilon_i(y)\right)ds'
\right]^2dxdy
\right\}^{\frac{1}{2}}\\
=&\,\left\{\tfrac{1}{\Delta{t}}\,
\sum_{n=1}^{{\sf N}_{\star}}
\int_{\ssy D}\int_{\ssy D}\left[
\int_{\ssy T_n}{\mathfrak X}_{(0,t)}(s')
\left(\,\sum_{i={{\sf M}_{\star}}+1}^{\infty}e^{-\lambda_i^4(t-s')}
\varepsilon_i(x)\,\varepsilon_i(y)\right)ds'
\right]^2dxdy
\right\}^{\frac{1}{2}}\\
=&\,\left\{\tfrac{1}{\Delta{t}}\,
\sum_{n=1}^{{\sf N}_{\star}}
\int_{\ssy D}\int_{\ssy D}\left[
\,\sum_{i={{\sf M}_{\star}}+1}^{\infty}
\left( \int_{\ssy T_n}{\mathfrak X}_{(0,t)}(s')\,e^{-\lambda_i^4(t-s')}\;ds' \right)
\varepsilon_i(x)\,\varepsilon_i(y)
\right]^2dxdy
\right\}^{\frac{1}{2}}\\
=&\,\left\{\tfrac{1}{\Delta{t}}\,
\sum_{n=1}^{{\sf N}_{\star}}
\int_{\ssy D}\left[
\,\sum_{i={{\sf M}_{\star}}+1}^{\infty}
\left( \int_{\ssy T_n}{\mathfrak X}_{(0,t)}(s')\,e^{-\lambda_i^4(t-s')}\;ds' \right)^2
\varepsilon_i^2(x)\right]\;dx
\right\}^{\frac{1}{2}}\\
=&\,\left\{\tfrac{1}{\Delta{t}}\,
\sum_{n=1}^{{\sf N}_{\star}}
\,\sum_{i={{\sf M}_{\star}}+1}^{\infty}
\left( \int_{\ssy T_n}{\mathfrak X}_{(0,t)}(s')\,e^{-\lambda_i^4(t-s')}\;ds' \right)^2
\right\}^{\frac{1}{2}}\quad\forall\,t\in(0,T].
\end{split}
\end{equation*}
Then, using the Cauchy-Schwarz inequality and \eqref{SR_BOUND}, we obtain
\begin{equation}\label{ZbZb_bound}
\begin{split}
{\sf Z}_{\ssy B}(t)\leq&\,
\,\left\{
\,\sum_{i={{\sf M}_{\star}}+1}^{\infty}
\left(\int_{0}^te^{-2\,\lambda_i^4(t-s')}\;ds'\right)
\right\}^{\frac{1}{2}}\\
\leq&\,\left(\,
\sum_{i={\sf M}_{\star}}^{\infty}\tfrac{1}{2\,\pi^4\,i^4}\,\right)^{\half}\\
\leq&\,\tfrac{1}{\sqrt{2}\,\pi^2}\,\tfrac{1}{{\sf M}_{\star}^{\frac{3}{2}-\delta}}
\left(\,\sum_{i={\sf M}_{\star}}^{\infty}\tfrac{1}{i^{1+2\,\delta}}\,\right)^{\half}\\
\leq&C\,\delta^{-\half}\,{\sf M}_{\star}^{-\frac{3}{2}+\delta}
\quad\forall\,t\in(0,T],\quad\forall\,\delta\in\left(0,\tfrac{3}{2}\right].\\
\end{split}
\end{equation}
Finally, combining \eqref{ZaZa} along with the
$L^2(D)-$orthogonality of $(\varepsilon_k)_{k=1}^{\infty}$,
we conclude that
\begin{equation*}
\begin{split}
{\sf Z}_{\ssy A}(t)=&\,\left\{\,\tfrac{1}{(\Delta{t})^2}\,
\sum_{n=1}^{{\sf N}_{\star}}\int_{\ssy D}\!\int_{\ssy D}\!\int_{\ssy T_n}
\left[\int_{\ssy T_n}\left[{\mathfrak X}_{(0,t)}(s)\,G(t-s;x,y)-
{\mathfrak X}_{(0,t)}(s')\,G(t-s';x,y)\right]ds'\right]^2\;dxdyds\right\}^{\frac{1}{2}}\\
=&\,\left\{\,\sum_{i=1}^{\infty}\tfrac{1}{(\Delta{t})^2}\,
\sum_{n=1}^{{\sf N}_{\star}}\int_{\ssy T_n}
\left(\int_{\ssy T_n}\left[{\mathfrak X}_{(0,t)}(s)\,e^{-\lambda_i^4(t-s)}-
{\mathfrak X}_{(0,t)}(s')\,e^{-\lambda_i^4(t-s')}\right]ds'\right)^2
\;ds\right\}^{\frac{1}{2}}
\quad\forall\,t\in(0,T].\\
\end{split}
\end{equation*}
Then, we proceed as in the proof of Theorem~3.1 in \cite{KZ2010} to get
\begin{equation}\label{ZaZa_BOUND}
{\sf Z}_{\ssy A}(t)\leq\,C\,\Delta{t}^{\frac{3}{8}}
\quad\forall\,t\in(0,T].
\end{equation}
\par
The error bound \eqref{ModelError} follows by
observing that ${\sf Z}(0)=0$ and combining the bounds
\eqref{corv1}, \eqref{ZaZa_BOUND} and \eqref{ZbZb_bound}.
\end{proof}
%
%
%
%
%
%
%
%
\section{Time-Discrete Approximations}\label{SECTION3}
\subsection{The deterministic problem}\label{section3A}
In this section we introduce and analyze modified Crank-Nicolson time-discrete approximations,
$(W^m)_{m=0}^{\ssy M}$,  of the  solution $w$ to the deterministic problem \eqref{Det_Parab}.
%
%
\par
We begin by setting
\begin{equation}\label{CNDet1}
W^0:=w_0
\end{equation}
and then by finding $W^1\in{\bfdot H}^4(D)$ such that
\begin{equation}\label{CNDet12}
W^1-W^{0} +\tfrac{\Delta\tau}{2}\,\partial_x^4W^1=0.
\end{equation}
Finally, for $m=2,\dots,M$, we specify $W^m\in{\bfdot H}^4(D)$ such that
\begin{equation}\label{CNDet2}
W^m-W^{m-1} +\Delta\tau\,\partial_x^4W^{m-\frac{1}{2}}=0.
\end{equation}
%
%
\par
First, we provide a discrete in time $L^2_t(L^2_x)$
a priori estimate of time averages of the nodal error for the modified Crank-Nicolson
time-discrete approximations defined above.
\begin{proposition}\label{DetPropo1}
Let $(W^m)_{m=0}^{\ssy M}$ be the modified Crank-Nicolson time-discrete
approximations of the solution $w$ to the problem \eqref{Det_Parab}
defined by \eqref{CNDet1}, \eqref{CNDet12} and \eqref{CNDet2}.
Then, there exists a constant $C>0$, independent of $\dtau$, such that
\begin{equation}\label{May2015_0}
\left(\,\dtau\,\sum_{m=1}^{\ssy M}\|W^{m-\half}-w^{m-\half}\|_{\ssy
0,D}^2\right)^{\frac{1}{2}} \leq\,C\,\Delta\tau^{\theta}
\,\|w_0\|_{\ssy{\bfdot H}^{4\theta-2}}\quad\forall\,\theta\in[0,1],
\quad\forall\,w_0\in{\bfdot H}^2(D),
\end{equation}
where $w^{\ell}(\cdot):=w(\tau_{\ell},\cdot)$ for $\ell=0,\dots,M$.
\end{proposition}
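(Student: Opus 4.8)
The plan is to diagonalise the whole problem in the orthonormal eigenbasis $(\varepsilon_k)_{k\ge1}$ and reduce \eqref{May2015_0} to a family of scalar estimates, one per Fourier mode. Writing $w_0=\sum_{k\ge1}\widehat w_k\,\varepsilon_k$ with $\widehat w_k:=(w_0,\varepsilon_k)_{\ssy0,D}$, and setting $\mu_k:=\lambda_k^4$, $z_k:=\tfrac{\dtau\,\mu_k}{2}$, $r_k:=\tfrac{1-z_k}{1+z_k}$, both the exact solution and the scheme act diagonally: $w(\tau_m,\cdot)=\sum_k\widehat w_k\,e^{-\mu_k\tau_m}\varepsilon_k$, while \eqref{CNDet1}--\eqref{CNDet2} give $W^0=\sum_k\widehat w_k\varepsilon_k$ and $W^m=\sum_k\widehat w_k\,\tfrac{1}{1+z_k}\,r_k^{m-1}\,\varepsilon_k$ for $m\ge1$. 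Put $\rho_k^m:=W_k^m-\widehat w_k\,e^{-\mu_k\tau_m}$, so that $\rho_k^0=0$ and $\rho_k^m=\widehat w_k(\tfrac{1}{1+z_k}r_k^{m-1}-e^{-\mu_k\tau_m})$ for $m\ge1$. By Parseval $\|W^{m-\half}-w^{m-\half}\|_{\ssy0,D}^2=\sum_k|\rho_k^{m-\half}|^2$, and since $|\rho_k^{m-\half}|^2\le\tfrac12(|\rho_k^m|^2+|\rho_k^{m-1}|^2)$ with $\rho_k^0=0$, it suffices to bound, for each $k$, the nodal quantity $E_k:=\dtau\sum_{m=1}^M|\rho_k^m|^2$. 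As $\|w_0\|_{\ssy{\bfdot H}^{4\theta-2}}^2=\sum_k\lambda_k^{8\theta-4}\widehat w_k^2$, the proposition follows once I prove the two endpoint bounds $E_k\le C\,\lambda_k^{-4}\widehat w_k^2$ (the case $\theta=0$) and $E_k\le C\,\dtau^2\lambda_k^4\widehat w_k^2$ (the case $\theta=1$): for a general $\theta\in[0,1]$ one then combines them via $E_k=E_k^{1-\theta}E_k^{\theta}\le(C\lambda_k^{-4}\widehat w_k^2)^{1-\theta}(C\dtau^2\lambda_k^4\widehat w_k^2)^{\theta}=C\,\dtau^{2\theta}\lambda_k^{8\theta-4}\widehat w_k^2$, and sums over $k$.

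For $\theta=0$ I use $|\rho_k^m|\le|W_k^m|+\widehat w_k\,e^{-\mu_k\tau_m}$ and bound the two discrete $\ell^2_{\dtau}$ sums separately. The exact part contributes $\widehat w_k^2\,\dtau\sum_{m\ge1}e^{-2\mu_k\tau_m}=\widehat w_k^2\,\dtau\,(e^{2\mu_k\dtau}-1)^{-1}\le\widehat w_k^2(2\mu_k)^{-1}$, using $e^{x}-1\ge x$. The scheme part rests on the elementary identity
\[
\tfrac{1}{(1+z_k)^2}\,\tfrac{1}{1-r_k^2}=\tfrac{1}{4z_k},
\]
which gives $\dtau\sum_{m\ge1}\big(\tfrac{1}{1+z_k}r_k^{m-1}\big)^2=\dtau\,\tfrac{1}{(1+z_k)^2(1-r_k^2)}=\tfrac{\dtau}{4z_k}=\tfrac{1}{2\mu_k}$. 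Hence $E_k\le C\mu_k^{-1}\widehat w_k^2=C\lambda_k^{-4}\widehat w_k^2$ for every mode. This computation is the heart of the matter: for high modes $r_k^{m}$ does not decay ($|r_k|\to1$ as $z_k\to\infty$), reflecting the absence of smoothing for plain Crank--Nicolson, but the single backward-Euler damping factor $\tfrac{1}{1+z_k}$ supplies exactly the one extra power of $z_k$ needed to close the geometric series, which is precisely why the first step is modified.

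For $\theta=1$ I treat the two mode regimes differently. For the high modes $z_k\ge1$ one has $\lambda_k^{-4}\le C\,\dtau^2\lambda_k^4$ (since $\dtau\lambda_k^4=2z_k\ge2$), so the $\theta=0$ bound already yields $E_k\le C\dtau^2\lambda_k^4\widehat w_k^2$. For the low modes $z_k<1$ I use the error recurrence $\rho_k^m=r_k\,\rho_k^{m-1}+\widehat w_k\,(r_k-e^{-2z_k})\,e^{-2z_k(m-1)}$ for $m\ge2$, with $\rho_k^0=0$ and $\rho_k^1=\widehat w_k(\tfrac{1}{1+z_k}-e^{-2z_k})$. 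A Taylor expansion about $z_k=0$ gives the consistency orders $r_k-e^{-2z_k}=O(z_k^3)$ and $\tfrac{1}{1+z_k}-e^{-2z_k}=O(z_k)$. Solving the recurrence and taking $\ell^2_{\dtau}$ norms (via the same geometric sums as above together with a discrete Young inequality $\|r_k^{\cdot}\ast d\|_{\ell^2}\le\|r_k^{\cdot}\|_{\ell^1}\|d\|_{\ell^2}$ for the convolution driven by the consistency term) shows that the dominant contribution is the propagated first-step error $r_k^{m-1}\rho_k^1$, which contributes $\dtau\,|\rho_k^1|^2(1-r_k^2)^{-1}=O(\dtau\,z_k)\,\widehat w_k^2=O(\dtau^2\lambda_k^4)\,\widehat w_k^2$, while the accumulated $O(z_k^3)$ Crank--Nicolson consistency term is smaller by a factor $z_k^2$. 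This yields $E_k\le C\dtau^2\lambda_k^4\widehat w_k^2$ on the low modes as well.

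The main obstacle is this $\theta=1$ estimate on the low modes. The first step is only first-order accurate, so one must verify that, after propagation through the damped geometric sums, the low-order backward-Euler initial error and the $O(z_k^3)$ Crank--Nicolson consistency error still combine to the optimal order $O(\dtau^2\lambda_k^4)$ in the discrete $L^2_t(L^2_x)$ norm. It is this averaging-in-time effect, rather than any pointwise-in-time stability, that makes the estimate work, which is exactly the point noted in the introduction that the bound is not a natural outcome of the stability properties of the method.
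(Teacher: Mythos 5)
Your proof is correct, and it takes a genuinely different route from the paper's. The paper argues by operator energy methods: for $\theta=1$ it writes the error equation $T_{\ssy B}({\sf E}^m-{\sf E}^{m-1})+\dtau\,{\sf E}^{m-\half}=\sigma_m$, tests with ${\sf E}^{m-\half}$, telescopes the $T_{\ssy E}$-norm, bounds the consistency terms $\sigma_m$ via the parabolic regularity estimate \eqref{Reggo3}, and treats the first step through the intermediate quantity $w(\tau_{\half},\cdot)-W^1$ (the half-length backward Euler step is naturally second-order accurate at $\tau_{\half}$); for $\theta=0$ it bounds $W$ and $w$ separately by stability; intermediate $\theta$ then follows by (operator) interpolation. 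You instead diagonalize everything in the eigenbasis and reduce the claim to scalar estimates per mode: your formulas $W^m_k=\widehat w_k(1+z_k)^{-1}r_k^{m-1}$, the identity $(1+z_k)^{-2}(1-r_k^2)^{-1}=(4z_k)^{-1}$, the consistency orders $r_k-e^{-2z_k}=O(z_k^3)$ and $(1+z_k)^{-1}-e^{-2z_k}=O(z_k)$, the resulting low-mode contributions ($O(\dtau z_k)\widehat w_k^2$ from the propagated first-step error, $O(\dtau z_k^3)\widehat w_k^2$ from the Crank--Nicolson consistency convolution, both bounded by $C\dtau^2\lambda_k^4\widehat w_k^2$ when $z_k<1$), the high-mode shortcut $\lambda_k^{-4}\leq\tfrac14\dtau^2\lambda_k^4$ for $z_k\geq1$, and the per-mode H{\"o}lder step $E_k\leq A_k^{1-\theta}B_k^{\theta}$ replacing abstract interpolation all check out, as does the initial reduction of the averaged error to the nodal error. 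Your route is more elementary and more transparent: it needs no interpolation-space theory, it isolates exactly why the first step must be modified (the damping $(1+z_k)^{-2}$ in the squared geometric sums is what turns the divergent high-mode behavior of plain Crank--Nicolson, where $|r_k|\to1$, into the decay $(4z_k)^{-1}$), and as a by-product your two nodal endpoint bounds interpolate with exponent $\delta/2$ to exactly \eqref{September2015_0}, so Proposition~\ref{Propo_Boom1} comes for free from the same computation. What the paper's heavier energy machinery buys is independence of the eigenbasis: the same template is reused almost verbatim for the fully discrete approximations in Section~\ref{SECTION44a} (Propositions~\ref{Aygo_Kokora} and \ref{X_Aygo_Kokora}), where the discrete operator $B_h$ is not diagonal in any explicitly known basis, whereas your spectral argument, as it stands, is confined to the spatially continuous, constant-coefficient, one-dimensional problem.
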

%
%
%
%
%
%
%
%
\begin{proof}
The error bound \eqref{May2015_0} follows by interpolation after proving it for $\theta=1$ and $\theta=0$ 
(cf. \cite{BinLi}, \cite{KZ2010}, \cite{YubinY03}).  In the sequel,  we will use the symbol
$C$ to denote a generic constant that is independent of $\dtau$ and may changes value from
one line to the other.
\par\noindent\vskip0.2truecm\par
\textbullet\,{\tt Case $\theta=1$:}
Let ${\sf E}^{\star}:=w(\tau_{\half},\cdot)-W^1$ and
${\sf E}^m:=w^m-W^m$ for $m=0,\dots,M$. Using \eqref{Det_Parab}
and \eqref{CNDet2}, we arrive at
\begin{equation}\label{May2015_1}
T_{\ssy B}({\sf E}^m-{\sf E}^{m-1})+\Delta\tau\,{\sf E}^{m-\frac{1}{2}}
=\sigma_m,\quad m=2,\dots,M,
\end{equation}
where
\begin{equation}\label{May2015_1a}
\sigma_{\ell}(\cdot):=-\int_{\ssy\Delta_{\ell}}
\left[w(\tau,\cdot)-w^{\ell-\frac{1}{2}}(\cdot)\right]\,d\tau,
\quad\ell=2,\dots,M.
\end{equation}
Taking the $L^2(D)-$inner product of both sides
of \eqref{May2015_1} with ${\sf E}^{m-\half}$, and then using
\eqref{TB-prop1} and summing with respect to $m$,
from $2$ up to $M$, we obtain
\begin{equation}\label{May2015_2}
\|T_{\ssy E}{\sf E}^{\ssy M}\|^2_{\ssy 0,D}
-\|T_{\ssy E}{\sf E}^{1}\|^2_{\ssy 0,D}
+2\,\Delta\tau\,\sum_{m=2}^{\ssy M}\|{\sf E}^{m-\half}\|_{\ssy 0,D}^2
=2\,\sum_{m=2}^{\ssy M}(\sigma_m,{\sf E}^{m-\half})_{\ssy 0,D}.
\end{equation}
Applying the Cauchy-Schwarz inequality and the geometric mean inequality,
we have
\begin{equation*}
\begin{split}
2\sum_{m=2}^{\ssy M}(\sigma_m,{\sf E}^{m-\half})_{\ssy 0,D}
\leq&\,2\sum_{m=2}^{\ssy M}\|\sigma_m\|_{\ssy 0,D}
\,\|{\sf E}^{m-\half}\|_{\ssy 0,D}\\
\leq&\,\sum_{m=2}^{\ssy M}\left(\,\Delta\tau^{-1}\|\sigma_m\|_{\ssy 0,D}^2
+\Delta\tau\,\|{\sf E}^{m-\half}\|^2_{\ssy 0,D}\,\right),\\
\end{split}
\end{equation*}
which, along with \eqref{May2015_2}, yields
\begin{equation}\label{May2015_3}
\dtau\,\sum_{m=2}^{\ssy M}\|{\sf E}^{m-\half}\|_{\ssy 0,D}^2
\leq\,\|T_{\ssy E}{\sf E}^{1}\|^2_{\ssy 0,D}
+\dtau^{-1}\,\sum_{m=2}^{\ssy M}\|\sigma_m\|_{\ssy 0,D}^2.
\end{equation}
Using \eqref{May2015_1a}, we bound the quantities $(\sigma_m)_{m=2}^{\ssy M}$
as follows:
\begin{equation}\label{May2015_4}
\begin{split}
\|\sigma_m\|_{\ssy 0,D}^2=&\,\tfrac{1}{4}\,\int_{\ssy D}
\left(-\int_{\ssy\Delta_m}\!\!\int_{\tau}^{\tau_m}
\partial_{\tau}w(s,x)\,ds{d\tau}
+\int_{\ssy\Delta_m}\!\!\int_{\tau_{m-1}}^{\tau}
\partial_{\tau}w(s,x)\,ds{d\tau}\right)^2\,dx\\
\leq&\,\int_{\ssy D}\left(\int_{\ssy\Delta_m}
\!\!\int_{\ssy\Delta_m}|\partial_{\tau}w(s,x)|\,ds
d\tau\right)^2\,dx\\
\leq&\,\Delta\tau^3\,\int_{\ssy\Delta_m}
\|\partial_{\tau}w(s,\cdot)\|_{\ssy 0,D}^2\,ds,
\quad m=2,\dots,M.\\
\end{split}
\end{equation}
Using that ${\sf E}^0=0$ and combining \eqref{May2015_3},
\eqref{May2015_4} and \eqref{Reggo3} (with $\beta=0$, $\ell=1$, $r=0$),
we obtain
\begin{equation}\label{May2015_5}
\begin{split}
\dtau\,\sum_{m=1}^{\ssy M}\|{\sf E}^{m-\frac{1}{2}}\|_{\ssy
0,D}^2\leq&\,\tfrac{\dtau}{4}\,\|{\sf E}^1\|_{\ssy 0,D}^2
+\|T_{\ssy E}{\sf E}^{1}\|^2_{\ssy 0,D}
+\Delta\tau^2\,
\int_0^{\ssy T}\|\partial_{\tau}w(s,\cdot)\|_{\ssy 0,D}^2\,ds\\
\leq&\,\tfrac{\dtau}{4}\,\|{\sf E}^1\|_{\ssy 0,D}^2
+\|T_{\ssy E}{\sf E}^{1}\|^2_{\ssy 0,D}
+C\,\Delta\tau^2\,\|w_0\|_{\ssy{\bfdot H}^2}^2.\\
\end{split}
\end{equation}
In order to bound the first two terms in the right hand side of
\eqref{May2015_5}, we introduce the following splittings
\begin{equation}\label{May2015_6}
\|T_{\ssy E}{\sf E}^1\|_{\ssy 0,D}^2\leq\,2\,\left(\,
\|T_{\ssy E}(w(\tau_1,\cdot)-w(\tau_{\half},\cdot))\|^2_{\ssy 0,D}
+\|T_{\ssy E}{\sf E}^{\star}\|^2_{\ssy 0,D}\,\right)
\end{equation}
and
\begin{equation}\label{May2015_6a}
\dtau\,\|{\sf E}^1\|_{\ssy 0,D}^2\leq\,2\,\dtau\,\left(\,
\|w(\tau_1,\cdot)-w(\tau_{\half},\cdot)\|_{\ssy 0,D}^2
+\|{\sf E}^{\star}\|_{\ssy 0,D}^2\,\right).
\end{equation}
We continue by estimating the terms in the right hand side of \eqref{May2015_6}
and \eqref{May2015_6a}. First, we observe that
\begin{equation*}
\begin{split}
\|w(\tau_1,\cdot)-w(\tau_{\half},\cdot)\|_{\ssy 0,D}^2
=&\,\int_{\ssy D}\bigg(\int_{\tau_{\half}}^{\tau_1}
\partial_{\tau}w(\tau,x)\;d\tau\bigg)^2\;dx\\
\leq&\,\tfrac{\dtau}{2}\,\int_{\tau_{\half}}^{\tau_1}
\|\partial_{\tau}w(\tau,\cdot)\|^2_{\ssy 0,D}\;d\tau,\\
\end{split}
\end{equation*}
which, along with \eqref{Reggo3} (with $\ell=1$, $r=0$, $\beta=0$), yields 
\begin{equation}\label{May2015_7}
\|w(\tau_1,\cdot)-w(\tau_{\half},\cdot)\|_{\ssy 0,D}^2
\leq\,C\,\dtau\,\|w_0\|^2_{\ssy{\bfdot H}^2}.
\end{equation}
Next, we use \eqref{ElReg1} and \eqref{minus_equiv}, to get
\begin{equation*}\label{May2015_8a}
\begin{split}
\|T_{\ssy E}(w(\tau_1,\cdot)-w(\tau_{\half},\cdot))\|_{\ssy 0,D}^2
=&\,\int_{\ssy D}\,\bigg(\int_{\tau_{\half}}^{\tau_1}
T_{\ssy E}(\partial_{\tau}w(\tau,x))\;d\tau\bigg)^2\;dx\\
\leq&\,\tfrac{\dtau}{2}\,\int_{\tau_{\half}}^{\tau_1}
\|T_{\ssy E}\left(\partial_{\tau}w(\tau,\cdot)\right)\|_{\ssy 0,D}^2\;d\tau\\
\leq&\,C\,\dtau\,\int_{\tau_{\half}}^{\tau_1}
\|\partial_{\tau}w(\tau,\cdot)\|_{\ssy -2,D}^2\;d\tau\\
\leq&\,C\,\Delta\tau\,\int_{\tau_{\half}}^{\tau_1}
\|\partial_{\tau}w(\tau,\cdot)\|^2_{\ssy{\bfdot H}^{-2}}\;d\tau.\\
\end{split}
\end{equation*}
Observing that
\begin{equation}\label{Exx_Sol}
w(\tau,\cdot)=\sum_{k=1}^{\infty}e^{-\lambda_k^4\,\tau}
\,(w_0,\varepsilon_k)_{\ssy 0,D}\,\varepsilon_k(\cdot)
\quad\forall\,\tau\in[0,T],
\end{equation}
we have
\begin{equation*}\label{May2015_8b}
\begin{split}
\|\partial_{\tau}w(\tau,\cdot)\|_{\ssy{\bfdot H}^{-2}}^2
=&\,\sum_{k=1}^{\infty}\lambda_k^{-4}
\,\,|(\partial_{\tau}w(\tau,\cdot),\varepsilon_k)_{\ssy 0,D}|^2\\
=&\,\sum_{k=1}^{\infty}\lambda_k^4\,e^{-2\,\lambda_k^4\,\tau}
\,|(w_0,\varepsilon_k)_{\ssy 0,D}|^2\\
\leq&\,\|w_0\|_{\ssy{\bfdot H}^2}^2\quad\forall\,\tau\in[0,T].\\
\end{split}
\end{equation*}
Thus,  we arrive at
\begin{equation}\label{May2015_9}
\|T_{\ssy E}(w(\tau_1,\cdot)-w(\tau_{\half},\cdot))\|_{\ssy 0,D}^2\leq\,C\,
\Delta\tau^2\,\|w_0\|_{\ssy{\bfdot H}^2}^2.
\end{equation}
Finally, using \eqref{Det_Parab} and \eqref{CNDet12} we have
\begin{equation}\label{May2015_10}
T_{\ssy B}({\sf E}^{\star}-{\sf E}^0)+\tfrac{\Delta\tau}{2}\,{\sf E}^{\star}=\sigma_{\star}
\end{equation}
with 
\begin{equation}\label{May2015_11}
\sigma_{\star}(\cdot):=-\int_{0}^{\tau_{\frac{1}{2}}}\left[\,w(s,\cdot)-w(\tau_{\half},\cdot)\right]\;ds.
\end{equation}
Since ${\sf E}^0=0$, after taking the $L^2(D)-$inner product of both sides
of \eqref{May2015_10} with ${\sf E}^{\star}$ and using \eqref{TB-prop1} and 
the Cauchy-Schwarz inequality along with the arithmetic mean inequality, we obtain
\begin{equation}\label{May2015_12}
\begin{split}
\|T_{\ssy E}{\sf E}^{\star}\|_{\ssy 0,D}^2+\tfrac{\dtau}{2}\,\|{\sf E}^{\star}\|_{\ssy 0,D}^2
=&\,(\sigma_{\star},{\sf E}^{\star})_{\ssy 0,D}\\
\leq&\,\tfrac{1}{\dtau}\,\|\sigma_{\star}\|_{\ssy 0,D}^2
+\tfrac{\dtau}{4}\,\|{\sf E}^{\star}\|_{\ssy 0,D}^2.\\
\end{split}
\end{equation}
Now, using \eqref{May2015_11} and \eqref{Reggo3} (with $\beta=0$, $\ell=1$, $r=0$)
we obtain
\begin{equation*}
\begin{split}
\|\sigma_{\star}\|_{\ssy 0,D}^2=&\,\int_{\ssy D}
\left[\,\int_0^{\tau_{\half}}\left(\int_s^{\tau_{\half}}\partial_{\tau}w(\tau,x)\;d\tau\right)\;ds
\,\right]^2\;dx\\
\leq&\,\tfrac{\Delta\tau^2}{4}\int_{\ssy D}
\left(\,\int_0^{\tau_{\frac{1}{2}}}|\partial_{\tau}w(\tau,x)|\;d\tau\,\right)^2\;dx\\
\leq&\,\tfrac{\Delta\tau^3}{8}
\int_0^{\tau_1}\|\partial_{\tau}w(\tau,\cdot)\|^2_{\ssy 0,D}\;d\tau\\
\leq&\,C\,\Delta\tau^3\,\|w_0\|_{\ssy{\bfdot H}^2}^2,\\
\end{split}
\end{equation*}
which, along with \eqref{May2015_12}, yields
\begin{equation}\label{May2015_13}
\|T_{\ssy E}{\sf E}^{\star}\|_{\ssy 0,D}^2+\tfrac{\dtau}{4}\,\|{\sf E}^{\star}\|_{\ssy 0,D}^2
\leq\,C\,\dtau^2\,\|w_0\|^2_{\ssy{\bfdot H}^2}.
\end{equation}
Thus, from  \eqref{May2015_5}, \eqref{May2015_6}, \eqref{May2015_6a},
\eqref{May2015_7}, \eqref{May2015_9} and \eqref{May2015_13}, we conclude that
\begin{equation*}\label{May2015_14}
\dtau\,\sum_{m=1}^{\ssy M}\|{\sf E}^{m-\frac{1}{2}}\|_{\ssy
0,D}^2\leq\,C\,\dtau^2\,\|w_0\|^2_{\ssy{\bfdot H}^2}.
\end{equation*}
\par\noindent\vskip0.2truecm\par
\textbullet\,{\tt Case $\theta=0$:}
First, we observe that \eqref{CNDet12} and \eqref{CNDet2} are equivalent
to
\begin{equation}\label{May2015_20a}
T_{\ssy B}(W^1-W^{0})+\tfrac{\dtau}{2}\,W^{1}=0
\end{equation}
and
\begin{equation}\label{May2015_20}
T_{\ssy B}(W^m-W^{m-1})+\dtau\,W^{m-\frac{1}{2}}=0,\quad
m=2,\dots,M.
\end{equation}
Next, we take the $L^2(D)-$inner product of both sides of
\eqref{May2015_20} with $W^{m-\frac{1}{2}}$,
use \eqref{TB-prop1} and sum with respect to $m$ from
$2$ up to $M$, to obtain
\begin{equation*}
\|T_{\ssy E}W^{\ssy M}\|_{\ssy 0,D}^2-\|T_{\ssy E}W^1\|_{\ssy 0,D}^2
+2\,\dtau\,\sum_{m=2}^{\ssy M}\|W^{m-\frac{1}{2}}\|_{\ssy 0,D}^2=0,
\end{equation*}
which yields that
\begin{equation}\label{May2015_21}
\dtau\,\|W^1\|_{\ssy 0,D}^2
+\dtau\,\sum_{m=2}^{\ssy M}\|W^{m-\frac{1}{2}}\|_{\ssy 0,D}^2\leq
\,\dtau\,\|W^1\|_{\ssy 0,D}^2+\|T_{\ssy E}W^1\|_{\ssy 0,D}^2.
\end{equation}
Now, we take the $L^2(D)-$inner product of both sides of
\eqref{May2015_20a} with $W^{1}$,
use \eqref{TB-prop1} along with \eqref{innerproduct} to get
\begin{equation}\label{May2015_22}
\|T_{\ssy E}W^1\|_{\ssy 0,D}^2+\dtau\,\|W^1\|_{\ssy 0,D}^2
\leq\,\|T_{\ssy E}W^0\|_{\ssy 0,D}^2.
\end{equation}
Combining \eqref{May2015_21} and \eqref{May2015_22} and then
using \eqref{ElReg1} and \eqref{minus_equiv}, we obtain
\begin{equation}\label{May2015_23}
\begin{split}
\dtau\,\|W^1\|_{\ssy 0,D}^2
+\dtau\,\sum_{m=2}^{\ssy M}\|W^{m-\frac{1}{2}}\|_{\ssy 0,D}^2
\leq&\,\|T_{\ssy E}w_0\|_{\ssy 0,D}^2\\
\leq&\,C\,\|w_0\|_{\ssy -2,D}^2\\
\leq&\,C\,\|w_0\|_{\ssy{{\bfdot H}^{-2}}}^2.\\
\end{split}
\end{equation}
In addition, we have
\begin{equation}\label{Museum_1}
\dtau\,\|w^1\|^2_{\ssy 0,D}+\dtau\sum_{m=2}^{\ssy M}\|w^{m-\half}\|_{\ssy 0,D}^2
\leq\,2\,\dtau\,\sum_{m=1}^{\ssy M}\|w^m\|_{\ssy 0,D}^2
\end{equation}
and
\begin{equation*}
\begin{split}
2\,\dtau\,\sum_{m=1}^{\ssy M}\|w^m\|_{\ssy 0,D}^2\leq&\,2\,\dtau^{-1}\,\sum_{m=1}^{\ssy M}\int_{\ssy D}
\left(\,\int_{\tau_{m-1}}^{\tau_m}
\partial_{\tau}\left[\,(\tau-\tau_{m-1})
\,w(\tau,x)\,\right]\,d\tau\,\right)^2dx\\
\leq&\,2\,\dtau^{-1}\,\sum_{m=1}^{\ssy M}\int_{\ssy D}
\left(\,\int_{\tau_{m-1}}^{\tau_m}\left[\,w(\tau,x)
+(\tau-\tau_{m-1})\,w_{\tau}(\tau,x)\,\right]
\,d\tau\,\right)^2dx\\
\leq&\,4\,\sum_{m=1}^{\ssy M}\int_{\ssy\Delta_m}
\left(\,\|w(\tau,\cdot)\|_{\ssy 0,D}^2 +(\tau-\tau_{m-1})^2\,
\|w_{\tau}(\tau,\cdot)\|_{\ssy 0,D}^2\,\right)\;d\tau\\
\leq&\,4\,\int_0^{\ssy T}\left(\,\|w(\tau,\cdot)\|_{\ssy 0,D}^2
+\tau^2\,\|w_{\tau}(\tau,\cdot)\|_{\ssy 0,D}^2\,\right)\;d\tau,
\end{split}
\end{equation*}
which, along with \eqref{Reggo3} (taking $(\beta,\ell,r)=(0,0,0)$
and $(\beta,\ell,r)=(2,1,0)$), yields
\begin{equation}\label{May2015_24}
2\,\dtau\,\sum_{m=1}^{\ssy M}\|w^m\|_{\ssy 0,D}^2\leq\,C\,\|w_0\|_{\ssy {\bfdot H}^{-2}}^2.
\end{equation}
Observing that  ${\sf E}^{\half}=\half\,{\sf E}^1$, we have
\begin{equation*}
\begin{split}
\dtau\,\sum_{m=1}^{\ssy M}\|{\sf E}^{m-\half}\|_{\ssy 0,D}^2
\leq&\,2\,\left(\,
\dtau\,\|W^1\|_{\ssy 0,D}^2+\dtau\,\sum_{m=2}^{\ssy M}
\|W^{m-\half}\|_{\ssy 0,D}^2\,\right)\\
&\,+2\, \left(\,
\dtau\,\|w^1\|_{\ssy 0,D}^2+\dtau\,\sum_{m=2}^{\ssy M}\|w^{m-\half}\|_{\ssy 0,D}^2\,\right),\\
\end{split}
\end{equation*}
which, after using \eqref{May2015_23}, \eqref{Museum_1}
and \eqref{May2015_24}, yields
\begin{equation*}
\sum_{m=1}^{\ssy M}\Delta\tau\,\|{\sf E}^{m-\half}\|_{\ssy
0,D}^2\leq\,C\,\|w_0\|_{\ssy {\bfdot H}^{-2}}^2.
\end{equation*}
\end{proof}
\par
Next, we show a discrete in time $L^2_t(L^2_x)$ a priori estimate of the nodal error for the
modified Crank-Nicolson time-discrete approximations.
\begin{proposition}\label{Propo_Boom1}
Let $(W^m)_{m=0}^{\ssy M}$ be the modified Crank-Nicolson time-discrete
approximations of the solution $w$ to the problem \eqref{Det_Parab}
defined by \eqref{CNDet1}, \eqref{CNDet12} and \eqref{CNDet2}.
Then, there exists a constant $C>0$, independent of $\dtau$, such that
\begin{equation}\label{September2015_0}
\left(\dtau\,\sum_{m=1}^{\ssy M}\|W^m-w^m\|_{\ssy 0,D}^2\right)^{\frac{1}{2}}
\leq\,C\,\Delta\tau^{\frac{\delta}{2}}
\,\|w_0\|_{\ssy{\bfdot H}^{2(\delta-1)}}\quad\forall\,\delta\in[0,1],
\quad\forall\,w_0\in{\bfdot H}^2(D),
\end{equation}
where $w^{\ell}:=w(\tau_{\ell},\cdot)$ for $\ell=0,\dots,M$.
\end{proposition}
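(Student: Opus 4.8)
The plan is to derive \eqref{September2015_0} from the averaged nodal estimate already proved in Proposition~\ref{DetPropo1}, together with a control on the time-jumps of the error. Writing ${\sf E}^m:=w^m-W^m$ and recalling ${\sf E}^{m-\half}=\tfrac12({\sf E}^m+{\sf E}^{m-1})$, the elementary identity ${\sf E}^m={\sf E}^{m-\half}+\tfrac12({\sf E}^m-{\sf E}^{m-1})$ yields $\|{\sf E}^m\|_{\ssy0,D}^2\le2\,\|{\sf E}^{m-\half}\|_{\ssy0,D}^2+\tfrac12\|{\sf E}^m-{\sf E}^{m-1}\|_{\ssy0,D}^2$, and hence
\[
\dtau\sum_{m=1}^{\ssy M}\|{\sf E}^m\|_{\ssy0,D}^2\le 2\,\dtau\sum_{m=1}^{\ssy M}\|{\sf E}^{m-\half}\|_{\ssy0,D}^2+\tfrac{\dtau}{2}\sum_{m=1}^{\ssy M}\|{\sf E}^m-{\sf E}^{m-1}\|_{\ssy0,D}^2.
\]
The first sum is exactly the quantity estimated in \eqref{May2015_0}: choosing there $\theta=\tfrac{\delta}{2}\in[0,\tfrac12]$ gives $\dtau\sum_m\|{\sf E}^{m-\half}\|_{\ssy0,D}^2\le C\,\dtau^{\delta}\|w_0\|_{\ssy{\bfdot H}^{2(\delta-1)}}^2$, which already matches the right-hand side of \eqref{September2015_0}. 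Thus the whole proposition reduces to the \emph{jump estimate}
\[
\dtau\sum_{m=1}^{\ssy M}\|{\sf E}^m-{\sf E}^{m-1}\|_{\ssy0,D}^2\le C\,\dtau^{\delta}\,\|w_0\|_{\ssy{\bfdot H}^{2(\delta-1)}}^2\qquad\forall\,\delta\in[0,1].
\]

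As in Proposition~\ref{DetPropo1}, I would establish this at the two endpoints $\delta=1$ and $\delta=0$ and then interpolate (cf. \cite{BinLi}, \cite{KZ2010}, \cite{YubinY03}). For both endpoints the natural device is diagonalization in the orthonormal system $(\varepsilon_k)$: by \eqref{Exx_Sol} the exact nodal values satisfy $(w^m,\varepsilon_k)_{\ssy0,D}=e^{-\lambda_k^4\tau_m}(w_0,\varepsilon_k)_{\ssy0,D}$, while \eqref{CNDet12}–\eqref{CNDet2} give $(W^m,\varepsilon_k)_{\ssy0,D}=s_k\,r_k^{\,m-1}(w_0,\varepsilon_k)_{\ssy0,D}$ for $m\ge1$, with $r_k:=\tfrac{1-\mu_k/2}{1+\mu_k/2}$, $s_k:=\tfrac1{1+\mu_k/2}$ and $\mu_k:=\dtau\lambda_k^4$. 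Setting $({\sf E}^m,\varepsilon_k)_{\ssy0,D}=a_k^m\,(w_0,\varepsilon_k)_{\ssy0,D}$, so that $a_k^0=0$ (because ${\sf E}^0=0$) and $a_k^m=e^{-\lambda_k^4\tau_m}-s_k\,r_k^{\,m-1}$ for $m\ge1$, Parseval's identity decouples the jump sum into scalar series,
\[
\dtau\sum_{m=1}^{\ssy M}\|{\sf E}^m-{\sf E}^{m-1}\|_{\ssy0,D}^2=\sum_{k=1}^{\infty}(w_0,\varepsilon_k)_{\ssy0,D}^2\;\dtau\sum_{m=1}^{\ssy M}\big(a_k^m-a_k^{m-1}\big)^2,
\]
and since $|r_k|<1$ each inner series is a convergent geometric-type sum evaluable in closed form through the identities $1-r_k=\tfrac{\mu_k}{1+\mu_k/2}$ and $1-r_k^2=\tfrac{2\mu_k}{(1+\mu_k/2)^2}$.

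The hard part will be the uniform, frequency-by-frequency control of $\dtau\sum_m(a_k^m-a_k^{m-1})^2$, which I expect to split into two regimes. For low frequencies ($\mu_k\lesssim1$) the Crank–Nicolson scheme is consistent, and Taylor expansion of $e^{-\mu_k}$, $r_k$ and $s_k$ shows that each increment $a_k^m-a_k^{m-1}$ is small, which produces the required positive powers of $\dtau$. For high frequencies ($\mu_k\gtrsim1$) there is no smoothing to exploit, so one uses only $|r_k|\le1$ and $s_k\le2\,\mu_k^{-1}$ together with $\sum_m r_k^{2(m-1)}=(1-r_k^2)^{-1}$; the closed-form value of the inner sum is then of order $\mu_k^{-1}$. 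Checking the endpoints, at $\delta=1$ one gets a bound on $\sum_m(a_k^m-a_k^{m-1})^2$ that is uniform in $k$ (yielding $\le C\|w_0\|_{\ssy0,D}^2$ after summation), while at $\delta=0$ the factor $\dtau$ combines with the order-$\mu_k^{-1}$ inner sum to produce a weight of order $\lambda_k^{-4}$ (yielding $\le C\|w_0\|_{\ssy{\bfdot H}^{-2}}^2$). I anticipate the delicate point to be the bookkeeping near the transition $\mu_k\sim1$ and, especially, the first increment ($m=1$), namely $a_k^1-a_k^0=e^{-\mu_k}-s_k$: here it is essential that the modified first step \eqref{CNDet12} contributes the damping factor $s_k$, without which the $m=1$ term could not be absorbed into $\|w_0\|_{\ssy{\bfdot H}^{-2}}$ for rough data. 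Once the jump estimate holds at $\delta=0,1$, interpolation delivers \eqref{September2015_0} for every $\delta\in[0,1]$.
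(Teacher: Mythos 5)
Your proposal is correct, and while its skeleton (reduce to the averaged estimate of Proposition~\ref{DetPropo1} plus a jump estimate, prove endpoints, interpolate) matches the paper's, your handling of the decisive jump term is genuinely different. The paper's decomposition $S_{\ssy B}+S_{\ssy C}+S_{\ssy D}$ is algebraically the same as yours, since $W^m-W^{m-\half}=\tfrac12(W^m-W^{m-1})$ and $w^{m-\half}-w^m=-\tfrac12(w^m-w^{m-1})$; but the paper then estimates the two jump pieces separately by PDE/energy techniques: $S_{\ssy B}$ via the telescoping identity obtained by testing \eqref{CNDet2} with $W^m-W^{m-1}$, giving $\dtau\sum_{m\ge2}\|W^m-W^{m-1}\|_{\ssy 0,D}^2\le\tfrac{\dtau^2}{2}\|\partial_x^2W^1\|_{\ssy 0,D}^2$ with $\|\partial_x^2W^1\|_{\ssy 0,D}$ controlled by \eqref{Hadji3} ($\delta=1$) and the duality bound \eqref{Hadji5}--\eqref{Hadji6} ($\delta=0$), and $S_{\ssy D}$ via the smoothing estimates \eqref{Reggo0} and \eqref{May2015_24}. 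You instead keep the error jump ${\sf E}^m-{\sf E}^{m-1}$ whole and diagonalize in $(\varepsilon_k)$; your formulas are right, and the bounds you announce do check out: for $m\ge2$ one has $a_k^m-a_k^{m-1}=-e^{-(m-1)\mu_k}(1-e^{-\mu_k})+s_kr_k^{m-2}(1-r_k)$, so geometric summation gives $\sum_{m\ge2}(a_k^m-a_k^{m-1})^2\le 2\,\tfrac{(1-e^{-\mu_k})e^{-2\mu_k}}{1+e^{-\mu_k}}+2\,\tfrac{\mu_k/2}{(1+\mu_k/2)^2}$ (using $s_k^2\,\tfrac{1-r_k}{1+r_k}=\tfrac{\mu_k/2}{(1+\mu_k/2)^2}$), which is uniformly bounded in $k$ (the $\delta=1$ endpoint) and of order $\mu_k^{-1}$ once $\mu_k\gtrsim1$, so multiplication by $\mu_k=\dtau\lambda_k^4$ stays bounded (the $\delta=0$ endpoint); the first increment $e^{-\mu_k}-s_k$ passes both tests too. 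Your remark about the modified first step is exactly the right one: with the unmodified Crank--Nicolson first step one faces $e^{-\mu_k}-r_k\to1$ as $\mu_k\to\infty$, so $\mu_k(e^{-\mu_k}-r_k)^2$ blows up and the ${\bfdot H}^{-2}$ endpoint fails --- the same fact the paper encodes in \eqref{Hadji6}. As for what each route buys: yours is elementary and fully explicit, the concluding ``interpolation'' is just the per-frequency geometric-mean bound $d_k\le C\min(\dtau,\lambda_k^{-4})\le C\,\dtau^{\delta}\lambda_k^{-4(1-\delta)}$, and you get the averaged term for every $\delta$ at once by taking $\theta=\delta/2$ in \eqref{May2015_0} instead of interpolating the whole estimate; the paper's energy/smoothing route is less explicit but more portable, since essentially the same testing argument is reused in Proposition~\ref{X_Aygo_Kokora} for the fully discrete approximations, where the spectral data of the discrete operator $B_h$ is not explicitly available and your closed-form computation would not carry over verbatim.
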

\begin{proof}
We will arrive at the error bound \eqref{September2015_0} by interpolation after
proving it for $\delta=1$ and $\delta=0$  (cf. Proposition~\ref{DetPropo1}).
In both cases, the error estimation is based on the following bound
\begin{equation}\label{Hadji0}
\left(\dtau\sum_{m=1}^{\ssy M}\|W^m-w^m\|_{\ssy 0,D}^2\right)^{\frac{1}{2}}\leq
S_{\ssy B}+S_{\ssy C}+S_{\ssy D}
\end{equation}
where
\begin{equation*}
\begin{split}
S_{\ssy B}:=&\,\left(\dtau\sum_{m=2}^{\ssy M}\|W^m-W^{m-\half}\|_{\ssy 0,D}^2\right)^{\frac{1}{2}},\\
S_{\ssy C}:=&\,\left(\dtau\,\|W^1-w^1\|_{\ssy 0,D}^2
+\dtau\sum_{m=2}^{\ssy M}\|W^{m-\half}-w^{m-\half}\|_{\ssy 0,D}^2\right)^{\frac{1}{2}},\\
S_{\ssy D}:=&\,\left(\dtau\sum_{m=2}^{\ssy M}\|w^{m-\half}-w^m\|_{\ssy 0,D}^2\right)^{\frac{1}{2}}.\\
\end{split}
\end{equation*}
In the sequel,  we will use the symbol
$C$ to denote a generic constant that is independent of $\dtau$ and may changes value from
one line to the other.
\par\noindent\vskip0.2truecm\par
\textbullet\,{\tt Case $\delta=1$:}
Taking the $L^2(D)-$inner product of both sides of \eqref{CNDet2} with $(W^m-W^{m-1})$
and then integrating by parts, we easily arrive at
\begin{equation}\label{Hadji1}
\|W^m-W^{m-1}\|^2_{\ssy 0,D}+\tfrac{\dtau}{2}\,\left(\,\|\partial_x^2W^m\|_{\ssy 0,D}^2
-\|\partial_x^2W^{m-1}\|_{\ssy 0,D}^2\,\right)=0,\quad m=2,\dots,M.
\end{equation}
After summing both sides of \eqref{Hadji1} with respect to $m$ from $2$ up to $M$, we obtain
\begin{equation*}
\dtau\sum_{m=2}^{\ssy M}\|W^m-W^{m-1}\|^2_{\ssy 0,D}
+\tfrac{\dtau^2}{2}\,\left(\,\|\partial_x^2W^{\ssy M}\|_{\ssy 0,D}^2
-\|\partial_x^2W^{1}\|_{\ssy 0,D}^2\,\right)=0,
\end{equation*}
which yields
\begin{equation}\label{Hadji2}
\dtau\sum_{m=2}^{\ssy M}\|W^m-W^{m-1}\|^2_{\ssy 0,D}
\leq\tfrac{\dtau^2}{2}\,\|\partial_x^2W^{1}\|_{\ssy 0,D}^2.
\end{equation}
Taking the $L^2(D)-$inner product of both sides of \eqref{CNDet12} with $W^1$,
and then integrating by parts and using \eqref{innerproduct}, we have
\begin{equation*}
\|W^1\|_{\ssy 0,D}^2-\|W^0\|_{\ssy 0,D}^2+\dtau\,\|\partial_x^2W^1\|_{\ssy 0,D}^2\leq0
\end{equation*}
from which we conclude that
\begin{equation}\label{Hadji3}
\dtau\,\|\partial_x^2W^1\|_{\ssy 0,D}^2\leq\|w_0\|_{\ssy 0,D}^2.
\end{equation}
Thus, combining \eqref{Hadji2} and \eqref{Hadji3}, we get
\begin{equation}\label{Hadji4}
\begin{split}
S_{\ssy B}=&\,\tfrac{1}{2}\,
\left(\dtau\sum_{m=2}^{\ssy M}\|W^m-W^{m-1}\|_{\ssy 0,D}^2\right)^{\half}\\
\leq&\,\tfrac{1}{2\sqrt{2}}\,\dtau\,\|\partial_x^2W^1\|_{\ssy 0,D}\\
\leq&\,\dtau^{\half}\,\|w_0\|_{\ssy 0,D}.\\
\end{split}
\end{equation}
Also, we observe that the estimate \eqref{May2015_0}, for $\theta=\frac{1}{2}$, yields
\begin{equation}\label{Hadji44}
S_{\ssy C}\leq\,C\,\dtau^{\frac{1}{2}}\,\|w_0\|_{\ssy 0,D}.
\end{equation}
Finally, using \eqref{Reggo0} (with $\ell=1$, $r=0$, $q=0$), we obtain
\begin{equation}\label{Hadji45}
\begin{split}
S_{\ssy D}\leq&\,\left(\dtau\sum_{m=2}^{\ssy M}
\left\|\int_{\ssy\Delta_m}\partial_{\tau}w(\tau,\cdot)\;d\tau\right\|_{\ssy 0,D}^2
\right)^{\frac{1}{2}}\\
\leq&\,\left(\dtau^2\int_{\dtau}^{\ssy T}\|\partial_{\tau}w(\tau,\cdot)\|^2_{\ssy 0,D}\;d\tau
\right)^{\frac{1}{2}}\\
\leq&\,C\,\left(\dtau^2\int_{\dtau}^{\ssy T}\tau^{-2}\,\|w_0\|^2_{\ssy 0,D}\;d\tau
\right)^{\frac{1}{2}}\\
\leq&\,C\,\dtau\,\|w_0\|_{\ssy 0,D}\,\left(\tfrac{1}{\dtau}-\tfrac{1}{T}\right)^{\frac{1}{2}}\\
\leq&\,C\,\dtau^{\half}\,\|w_0\|_{\ssy 0,D}.\\
\end{split}
\end{equation}
Thus, from \eqref{Hadji0}, \eqref{Hadji4}, \eqref{Hadji44} and \eqref{Hadji45} we conclude
\eqref{September2015_0} for $\delta=1$.
\par\noindent\vskip0.2truecm\par
\textbullet\,{\tt Case $\delta=0$:}
Taking again the $L^2(D)-$inner product of both sides of \eqref{CNDet12} with $W^1$
and then integrating by parts and using \eqref{minus_equiv} and \eqref{H_equiv} along with the
arithmetic mean inequality,  we obtain
\begin{equation}\label{Hadji5}
\begin{split}
\|W^1\|_{\ssy 0,D}^2+\tfrac{\dtau}{2}\,\|\partial_x^2W^1\|_{\ssy 0,D}^2=&\,(w_0,W^1)_{\ssy 0,D}\\
\leq&\,\|w_0\|_{\ssy -2,D}\,\|W^1\|_{\ssy 2,D}\\
\leq&\,C\,\|w_0\|_{\ssy{\bfdot H}^{-2}}\,\|W^1\|_{\ssy{\bfdot H}^2}\\
\leq&\,C\,\dtau^{-1}\|w_0\|^2_{\ssy{\bfdot H}^{-2}}
+\tfrac{\dtau}{4}\,\|W^1\|_{\ssy{\bfdot H}^2}^2.
\end{split}
\end{equation}
Now, integrating by parts we have
\begin{equation}\label{X_evid}
\begin{split}
\|\partial_x^2W^1\|_{\ssy 0,D}=&\,\left(\sum_{k=1}^{\infty}|(\varepsilon_k,\partial_x^2W^1)_{\ssy 0,D}|^2\right)^{\half}\\
=&\,\left(\sum_{k=1}^{\infty}|(\partial_x^2\varepsilon_k,W^1)_{\ssy 0,D}|^2\right)^{\half}\\
=&\,\left(\sum_{k=1}^{\infty}\lambda_k^4\,|(\varepsilon_k,W^1)_{\ssy 0,D}|^2\right)^{\half}\\
=&\,\|W^1\|_{\ssy {\bfdot H}^2},\\
\end{split}
\end{equation}
which, along with \eqref{Hadji5}, yields that
\begin{equation}\label{Hadji6}
\dtau^2\,\|\partial_x^2W^1\|_{\ssy 0,D}^2\leq\,C\,\|w_0\|_{\ssy{{\bfdot H}^{-2}}}^2.
\end{equation}
Thus, combining \eqref{Hadji2} and \eqref{Hadji6}, we conclude that
\begin{equation}\label{Hadji8}
\begin{split}
S_{\ssy B}=&\,\tfrac{1}{2}\,\left(\dtau
\sum_{m=2}^{\ssy M}\|W^m-W^{m-1}\|_{\ssy 0,D}^2\right)^{\half}\\
\leq&\,\tfrac{1}{2\sqrt{2}}\,\dtau\,\|\partial_x^2W^1\|_{\ssy 0,D}\\
\leq&\,C\,\|w_0\|_{\ssy{{\bfdot H}^{-2}}}.
\end{split}
\end{equation}
Also, the estimate \eqref{May2015_0}, for $\theta=0$, yields
\begin{equation}\label{Hadji11}
S_{\ssy C}\leq\,C\,\|w_0\|_{\ssy{{\bfdot H}^{-2}}}.
\end{equation}
Using the Cauchy-Schwarz inequality and \eqref{May2015_24}, we have
\begin{equation}\label{Hadji12}
\begin{split}
S_{\ssy D}=&\,\tfrac{1}{2}
\,\left(\dtau\sum_{m=2}^{\ssy M}\|w^m-w^{m-1}\|_{\ssy 0,D}^2\right)^{\half}\\
\leq&\,\tfrac{\sqrt{2}}{2}
\,\left(\dtau\sum_{m=1}^{\ssy M}\|w^m\|_{\ssy 0,D}^2\right)^{\half}\\
\leq&\,C\,\|w_0\|_{\ssy {\bfdot H}^{-2}}^2.\\
\end{split}
\end{equation}
Thus, from \eqref{Hadji0}, \eqref{Hadji8}, \eqref{Hadji11} and \eqref{Hadji12} we conclude
\eqref{September2015_0} for $\delta=0$.
\end{proof}
%
\subsection{The stochastic problem}\label{CN_Main_2}
The Crank-Nicolson time-stepping method for the approximate
problem \eqref{AC2} constructs, for $m=0,\dots,M$, an
approximation $U^m$ of ${\widehat u}(\tau_m,\cdot)$, first by
setting
\begin{equation}\label{CN_S1}
U^0:=0,
\end{equation}
and then, for $m=1,\dots,M$, by specifying
$U^m\in{\bfdot H}^4(D)$ such that
\begin{equation}\label{CN_S2}
U^m-U^{m-1}+\dtau\,\partial_x^4U^{m-\frac{1}{2}}
=\int_{\ssy\Delta_m}{\widehat W}\,ds\quad\text{\rm a.s.}.
\end{equation}
\par
In the theorem that follows, using the result of Proposition~\ref{Propo_Boom1} we show
a discrete in time $L^{\infty}_t(L^2_{\ssy P}(L^2_x))$ convergence estimate for the
Crank-Nicolson  time discrete approximations of ${\widehat u}$ defined above.
%
%
%
%
\begin{theorem}\label{TimeDiscreteErr1}
Let ${\widehat u}$ be the solution of \eqref{AC2} and
$(U^m)_{m=0}^{\ssy M}$ be the Crank-Nicolson
time-discrete appro\-ximations of ${\widehat u}$ specified in \eqref{CN_S1} and \eqref{CN_S2}.
Then, there exists constant $C>0$, independent of $\Delta{t}$, ${\sf M}_{\star}$
and $\dtau$, such that
\begin{equation}\label{Dinner0}
\max_{1\leq m \leq {\ssy M}} \left(\,{\mathbb E}
\left[ \|U^m-{\widehat u}^m\|_{\ssy
0,D}^2\right]\,\right)^{\half} \leq\,C
\,\,\,\epsilon^{-\half}\,\,\,\dtau^{\frac{3}{8}-\epsilon}
\quad\forall\,\epsilon\in\left(0,\tfrac{3}{8}\right]
\end{equation}
where ${\widehat u}^{\ell}:={\widehat u}(\tau_{\ell},\cdot)$ for $\ell=0,\dots,M$.
\end{theorem}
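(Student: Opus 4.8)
The plan is to exploit linearity via a discrete Duhamel representation and to reduce the stochastic nodal error to the deterministic, time-summed homogeneous error controlled by Proposition~\ref{Propo_Boom1}. Since $(\widehat W(s,\cdot),\varepsilon_k)_{\ssy 0,D}=0$ for $k>{\sf M}_{\star}$, both $\widehat u^m$ and $U^m$ have vanishing $\varepsilon_k$-components for such $k$, so it suffices to argue mode by mode for $k\le{\sf M}_{\star}$. Projecting \eqref{CN_S2} onto $\varepsilon_k$ gives the scalar recursion $U^m_k=r_kU^{m-1}_k+\mu_kF^m_k$ with $U^0_k=0$, where $r_k:=\tfrac{1-\dtau\lambda_k^4/2}{1+\dtau\lambda_k^4/2}$, $\mu_k:=(1+\dtau\lambda_k^4/2)^{-1}$ and $F^m_k:=\int_{\ssy\Delta_m}(\widehat W(s,\cdot),\varepsilon_k)_{\ssy 0,D}\,ds$; solving it yields $U^m_k=\mu_k\sum_{j=1}^m r_k^{m-j}F^j_k$, while the exact component is $\widehat u^m_k=\sum_{j=1}^m\int_{\ssy\Delta_j}e^{-\lambda_k^4(\tau_m-s)}(\widehat W(s,\cdot),\varepsilon_k)_{\ssy 0,D}\,ds$.

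First I would record the discrete Duhamel principle: the single-interval discrete propagation factor $\mu_kr_k^{m-j}$ is exactly the modified Crank--Nicolson propagator of the homogeneous problem \eqref{Det_Parab} --- the scheme \eqref{CNDet1}, \eqref{CNDet12}, \eqref{CNDet2} --- evaluated at step $m-j+1$ with unit datum (the modified first half-step in \eqref{CNDet12} is precisely what makes this matching exact, which is why Proposition~\ref{Propo_Boom1} is stated for that scheme). Consequently the contribution of the source on $\Delta_j$ to the error, namely $\int_{\ssy\Delta_j}[\,\mu_kr_k^{m-j}-e^{-\lambda_k^4(\tau_m-s)}\,](\widehat W(s,\cdot),\varepsilon_k)_{\ssy 0,D}\,ds$, is the mode-$k$ homogeneous nodal error of the modified scheme at step $\ell:=m-j+1$, up to the averaging of the injection time $s$ across $\Delta_j$ (whose midpoint $\tau_{j-1/2}$ sits at lag $\tau_{\ell-1/2}$ from $\tau_m$, so this residual is a controllable source-quadrature term).

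Next I would take second moments. Using that the increments are independent with $R^n_i\sim N(0,\Delta t)$, the It\^o isometry \eqref{Ito_Isom} applied mode by mode, the bound $\mathbb E[(F^j_k)^2]\le\dtau$ (with equality and exact decoupling when the grids $\{T_n\}$ and $\{\Delta_m\}$ are aligned), and the reindexing $\ell=m-j+1$, reduce the estimate to
\begin{equation*}
\mathbb E\big[\|U^m-\widehat u^m\|_{\ssy 0,D}^2\big]\le\dtau\sum_{k\le{\sf M}_{\star}}\sum_{\ell=1}^{m}|\mathcal E_{\ell,k}|^2,
\end{equation*}
where $\mathcal E_{\ell,k}$ is the mode-$k$ modified Crank--Nicolson homogeneous nodal error at step $\ell$ with unit datum. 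The decisive point is that the right-hand side is exactly the $\dtau$-weighted discrete $L^2_t(L^2_x)$ homogeneous error summed over modes: the time summation demanded by Proposition~\ref{Propo_Boom1} is furnished for free by the Duhamel sum over source intervals, and the $\dtau$ weight by the variance of the noise increments. Thus the $L^\infty_t$-in-$m$ stochastic estimate collapses onto the $L^2_t$ deterministic one, uniformly in $m$ because $\sum_{\ell=1}^m\le\sum_{\ell=1}^M$.

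Finally I would invoke Proposition~\ref{Propo_Boom1} in mode form: taking $w_0=\varepsilon_k$ gives $\|w_0\|_{\ssy{\bfdot H}^{2(\delta-1)}}^2=\lambda_k^{4(\delta-1)}$, hence $\dtau\sum_{\ell}|\mathcal E_{\ell,k}|^2\le C\,\dtau^{\delta}\,\lambda_k^{4(\delta-1)}$ for $\delta\in[0,1]$, and therefore $\mathbb E[\|U^m-\widehat u^m\|_{\ssy 0,D}^2]\le C\,\dtau^{\delta}\sum_{k}\lambda_k^{4(\delta-1)}$. Choosing $\delta=\tfrac34-2\epsilon$ makes the exponent $4(\delta-1)=-(1+8\epsilon)$, so \eqref{SR_BOUND} bounds the mode sum by $C\epsilon^{-1}$ and gives $\mathbb E[\|U^m-\widehat u^m\|_{\ssy 0,D}^2]\le C\,\epsilon^{-1}\,\dtau^{3/4-2\epsilon}$; taking square roots yields \eqref{Dinner0} for $\epsilon\in(0,\tfrac38]$. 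I expect the main obstacle to be the second step: rigorously establishing the discrete Duhamel identity that matches the one-interval discrete response with the modified homogeneous scheme, controlling the injection-time averaging within each $\Delta_j$, and reconciling the noise grid $\{T_n\}$ with the time-stepping grid $\{\Delta_m\}$ so that the clean variance $\dtau$ --- and hence the exact reduction to Proposition~\ref{Propo_Boom1} --- survives, including the mild cross-correlations between adjacent intervals in the non-aligned case.
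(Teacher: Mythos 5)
Your strategy is in substance the paper's own proof: your factor $\mu_k r_k^{m-j}$ is the eigenvalue of the paper's propagator ${\mathcal Q}_{m-j+1}$, your matching of the one-step discrete response with the modified scheme \eqref{CNDet1}--\eqref{CNDet2} is exactly the identity \eqref{EMP_2010_1}, and your endgame (Proposition~\ref{Propo_Boom1} with $w_0=\varepsilon_k$, then \eqref{SR_BOUND} with $\delta=\tfrac34-2\epsilon$) reproduces the paper's treatment of its term ${\mathcal A}_{\mathfrak m}$ verbatim. The genuine gap is in your central displayed inequality
\begin{equation*}
{\mathbb E}\big[\|U^m-{\widehat u}^m\|_{\ssy 0,D}^2\big]\leq\dtau\sum_{k\leq{\sf M}_{\star}}\sum_{\ell=1}^{m}|{\mathcal E}_{\ell,k}|^2 ,
\end{equation*}
which is not correct as stated: on each source interval $\Delta_j$ the kernel difference is $\mu_k r_k^{m-j}-e^{-\lambda_k^4(\tau_m-s)}$, and the in-interval variation of the exponential (your ``injection-time averaging'') is not contained in the nodal errors ${\mathcal E}_{\ell,k}$ --- if the nodal errors were hypothetically zero, your right-hand side would vanish while the left-hand side would not. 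The paper splits precisely here into ${\mathcal A}_{\mathfrak m}$ (nodal part, which you handle) and ${\mathcal B}_{\mathfrak m}$ (quadrature part), and ${\mathcal B}_{\mathfrak m}$ is \emph{not} lower order: it is bounded by $\tfrac{1}{\sqrt 2}\big(\sum_{\kappa}\lambda_\kappa^{-4}(1-e^{-2\lambda_\kappa^4\dtau})\big)^{\half}\leq C\,\dtau^{\frac38}$, i.e. it contributes at exactly the order of the theorem and needs its own estimate ((3.13) in \cite{KZ2010}). Calling it ``controllable'' and deferring it to a list of obstacles leaves the proof incomplete; nothing in your argument bounds it.

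The other obstacle you flag --- reconciling the noise grid $\{T_n\}$ with the stepping grid $\{\Delta_m\}$ --- dissolves once you use Lemma~\ref{Lhmma1} together with \eqref{KatiL2}: since $\int_0^{\ssy T}\!\int_{\ssy D}g\,{\widehat W}\,d\tau dx=\int_0^{\ssy T}\!\int_{\ssy D}\Pi(g)\,dW$, the It{\^o} isometry \eqref{Ito_Isom} and the contraction property of $\Pi$ give, for any deterministic $\phi$,
\begin{equation*}
{\mathbb E}\Big[\Big(\int_0^{\tau_m}\phi(s)\,\big({\widehat W}(s,\cdot),\varepsilon_k\big)_{\ssy 0,D}\,ds\Big)^2\Big]\leq\int_0^{\tau_m}\phi^2(s)\,ds,
\end{equation*}
with no bookkeeping of cross-correlations between adjacent $\Delta_j$'s at all (equivalently: expand in the variables ${\sf R}^n_k$ and apply Cauchy--Schwarz on each $T_n$). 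Your version of this step --- pulling out ${\mathbb E}[(F^j_k)^2]\leq\dtau$ interval by interval --- cannot be carried out as written, both because the coefficient of the noise is not constant on $\Delta_j$ and because of the correlations you mention; both problems disappear through the projection identity, which is exactly why the paper routes the second-moment computation through $\Pi$ before splitting the kernel.
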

%
%
%
%
%
%
%
%
%
%
\begin{proof}
Let ${\sf I}:L^2(D)\to L^2(D)$ be the identity operator,
${\sf Y}:H^4(D)\rightarrow L^2(D)$ be defined by
${\sf Y}:={\sf I}-\tfrac{\Delta\tau}{2}\,\partial_x^4$
and $\Lambda:L^2(D)\to{\bfdot H}^4(D)$ be the inverse
elliptic operator $\Lambda:=({\sf I}+\tfrac{\Delta{\tau}}{2}\,\partial_x^4)^{-1}$.
Finally, for $m=1,\dots,M$, we define an operator
${\mathcal Q}_m:L^2(D)\to{\bfdot H}^4(D)$ by
${\mathcal Q}_m:=(\Lambda{\sf Y})^{m-1}\Lambda$.
The operator $\Lambda$ has Green function
${\sf G}_{\ssy\Lambda}(x,y)=
\sum_{k=1}^{\infty}\frac{\varepsilon_k(x)
\,\varepsilon_k(y)}{1+\frac{\Delta\tau}{2}\lambda_k^4}$, i.e.
$\Lambda{f}(x)=\int_{\ssy D}{\sf G}_{\ssy\Lambda}(x,y)f(y)\,dy$
for $x\in{\overline D}$ and $f\in L^2(D)$. Also, ${\sf Y}$ has
Green function
${\sf G}_{\ssy{\sf Y}}(x,y)=\sum_{k=1}^{\infty}(1-\frac{\Delta\tau}{2}\lambda_k^4)
\,\varepsilon_k(x)\,\varepsilon_k(y)$, i.e.
${\sf Y}z(x)=\int_{\ssy D}{\sf G}_{\ssy{\sf Y}}(x,y)z(y)\;dy$ for
$x\in{\overline D}$ and $z\in H^4(D)$. Finally, for $m=1,\dots,M$,
${\mathcal Q}_m$ has Green function ${\sf G}_{\ssy{\mathcal Q}_m}$
given by
\begin{equation*}
{\sf G}_{\ssy{\mathcal Q}_m}=
\sum_{k=1}^{\infty}\tfrac{(1-\frac{\Delta\tau}{2}\lambda_k^4)^{m-1}}
{(1+\frac{\Delta\tau}{2}\lambda_k^4)^{m}}\,\varepsilon_k(x)
\,\varepsilon_k(y),
\end{equation*}
i.e. ${\mathcal Q}_mf(x)=\int_{\ssy D}{\sf G}_{\ssy{\mathcal Q}_m}(x,y)f(y)\;dy$ for
$x\in{\overline D}$ and $f\in L^2(D)$.
\par
For a given $w_0\in{\bfdot H}^2(D)$,
let $(W^m)_{m=0}^{\ssy M}$ be the modified Crank-Nicolson time-discrete
approximations defined by \eqref{CNDet1}, \eqref{CNDet12} and \eqref{CNDet2}. 
Then, using a simple induction argument, we conclude that
\begin{equation}\label{EMP_2010_1}
W^m={\mathcal Q}_{m}w_0,\quad m=1,\dots,M.
\end{equation}
%
%
Also, to simplify the notation, we set
$G_m(\tau;x,y):={\mathfrak X}_{(0,\tau_m)}(\tau)
\,\,G(\tau_m-\tau;x,y)$ for $m=1,\dots,M$.
\par
In the sequel, we will use the symbol $C$ to denote a generic constant that is independent
of $\Delta{t}$, ${\sf M}_{\star}$ and $\dtau$, and may changes value from one line to the other.
%
\par
Using \eqref{CN_S2} and an induction argument, we conclude that
\begin{equation*}
U^m=\sum_{\ell=1}^{\ssy m}
\int_{\ssy\Delta_{\ell}}{\mathcal Q}_{m-\ell+1}\left({\widehat
W}(\tau,\cdot)\right)\,\,d\tau, \quad m=1,\dots,M,
\end{equation*}
which yields
\begin{equation}\label{Dinner1}
U^m(x) =\int_0^{\ssy T}\!\!\!\int_{\ssy D}
{\mathcal K}_m(\tau;x,y)\,{\widehat W}(\tau,y)\,dyd\tau
\quad\forall\,x\in{\overline D}, \ken m=1,\dots,M,
\end{equation}
with
\begin{equation*}
{\mathcal K}_m(\tau;x,y):=\sum_{\ell=1}^{m}
{\mathfrak X}_{\ssy\Delta_{\ell}}(\tau) \,{\sf G}_{\ssy{\mathcal Q}_{m-\ell+1}}(x,y)
\quad\forall\,\tau\in[0,T],\ken\forall\,x,y\in{\overline D}.
\end{equation*}
%
%
%
Let ${\mathfrak m}\in\{1,\dots,M\}$ and 
${\mathcal E}_{\mathfrak m}:=\left(\,{\mathbb
E}\left[\,\left\|U^{\mathfrak m}-{\widehat u}^{\mathfrak m}
\right\|_{\ssy 0,D}^2\,\right]\,\right)^{\half}$.
Now, we use \eqref{Dinner1}, \eqref{HatUform}, \eqref{WNEQ2}, \eqref{Ito_Isom},
\eqref{HSxar} and \eqref{KatiL2}, to obtain
\begin{equation}\label{EMP_2010_2}
\begin{split}
{\mathcal E}_{\mathfrak m}=&\,\left(\,{\mathbb E}\left[\,\int_{\ssy D}\left(
\int_0^{\ssy T}\!\!\!\int_{\ssy D}
\,\left[\,{\mathcal K}_{\mathfrak m}(\tau;x,y)
-G_{\mathfrak m}(\tau;x,y)\,\right]\,
{\widehat W}(\tau,y)\,dyd\tau\,\right)^2dx\,\right]\,\right)^{\half}\\
\leq&\,\left(\,\int_0^{\tau_{\mathfrak m}}\left(\int_{\ssy D}\!\int_{\ssy D}
\,\left[\,{\mathcal K}_{\mathfrak m}(\tau;x,y)
-G_{\mathfrak m}(\tau;x,y)\,\right]^2\,dydx\right)\,d\tau\,\right)^{\frac{1}{2}}\\
\leq&\,\left(\,\sum_{\ell=1}^{\mathfrak m}\int_{\ssy\Delta_{\ell}}
\left\|{\mathcal Q}_{{\mathfrak m}-\ell+1}
-S(\tau_{\mathfrak m}-\tau)\right\|_{\ssy\rm HS}^2\;d\tau\,\right)^{\half}\\
\leq&\,{\mathcal A}_{\mathfrak m}+{\mathcal B}_{\mathfrak m},
\end{split}
\end{equation}
with
\begin{equation*}
\begin{split}
{\mathcal A}_{\mathfrak m}:=&\,
\left(\,\sum_{\ell=1}^{\mathfrak m}\int_{\ssy\Delta_{\ell}}
\left\|{\mathcal Q}_{{\mathfrak m}-\ell+1}
-S(\tau_{\mathfrak m}-\tau_{\ell-1})\right\|_{\ssy\rm HS}^2\;d\tau\,\right)^{\frac{1}{2}},\\
{\mathcal B}_{\mathfrak m}:=&\,
\left(\,\sum_{\ell=1}^{\mathfrak m}\int_{\ssy\Delta_{\ell}}
\left\|S(\tau_{\mathfrak m}-\tau_{\ell-1})
-S(\tau_{\mathfrak m}-\tau)\right\|_{\ssy\rm HS}^2\;d\tau\,\right)^{\half}.\\
\end{split}
\end{equation*}
Let $\delta\in[0,\frac{3}{4})$. Then, using  the definition of the Hilbert-Schmidt norm,
the deterministic estimate \eqref{September2015_0} and \eqref{SR_BOUND}, we have
\begin{equation*}
\begin{split}
{\mathcal A}_{\mathfrak m}=&\,\left[\,\sum_{\kappa=1}^{\infty}\,\left(\,
\dtau\,\sum_{\ell=1}^{\mathfrak m}
\left\|{\mathcal Q}_{{\mathfrak m}-\ell+1}\varepsilon_{\kappa}
-S(\tau_{\mathfrak m-\ell+1})\varepsilon_{\kappa}\right\|_{\ssy 0,D}^2\,\right)
\,\right]^{\half}\\
=&\,\left[\,\sum_{\kappa=1}^{\infty}\,\left(\,
\dtau\,\sum_{\ell=1}^{\mathfrak m}
\left\|{\mathcal Q}_{\ell}\varepsilon_{\kappa}
-S(\tau_{\ell})\varepsilon_{\kappa}\right\|_{\ssy 0,D}^2\,\right)\,\right]^{\half}\\
\leq&\,C\,\dtau^{\frac{\delta}{2}}\,\left(
\,\sum_{\kappa=1}^{\infty}\|\varepsilon_{\kappa}\|^2_{\ssy
{\bfdot H}^{2\delta-2}}\,\right)^{\half}\\
\leq&\,C\,\dtau^{\frac{\delta}{2}}\,\left(
\,\sum_{\kappa=1}^{\infty}
\tfrac{1}{\lambda_{\kappa}^{4(1-\delta)}}\,\right)^{\half}\\
\leq&\,C\,\dtau^{\frac{\delta}{2}}\,\left(
\,\sum_{\kappa=1}^{\infty}
\tfrac{1}{\lambda_{\kappa}^{1+8(\frac{3}{8}-\frac{\delta}{2})}}\,\right)^{\half}\\
\leq&\,C\,\left(\tfrac{3}{8}-\tfrac{\delta}{2}\right)^{-\half}\,\dtau^{\frac{\delta}{2}},\\
\end{split}
\end{equation*}
which, after setting $\epsilon=\tfrac{3}{8}-\frac{\delta}{2}\in(0,\tfrac{3}{8}]$, yields
\begin{equation}\label{trabajito}
{\mathcal A}_{\mathfrak m}\leq\,C\,\epsilon^{-\frac{1}{2}}\,\dtau^{\frac{3}{8}-\epsilon}.
\end{equation}
Now, using the definition of the Hilbert-Schmidt norm and \eqref{Exx_Sol},
we bound ${\mathcal B}_{\mathfrak m}$ as follows:
\begin{equation*}
\begin{split}
{\mathcal B}_{\mathfrak m}=&\,
\left[\,\sum_{\kappa=1}^{\infty}\,\left(\,\sum_{\ell=1}^{\mathfrak m}\int_{\ssy\Delta_{\ell}}
\left\|S(\tau_{\mathfrak m+1-\ell})\varepsilon_{\kappa}
-S(\tau_{\mathfrak m}-\tau)\varepsilon_{\kappa}\right\|_{\ssy 0,D}^2\;d\tau\right)\,\,\right]^{\half}\\
\leq&\,\left[\,\sum_{\kappa=1}^{\infty}\,\left(\,
\sum_{\ell=1}^{\mathfrak m}\int_{\ssy\Delta_{\ell}} \left(\int_{\ssy D}\left[
e^{-\lambda_{\kappa}^4(\tau_{\mathfrak m}-\tau_{\ell-1})}
-e^{-\lambda_{\kappa}^4(\tau_{\mathfrak m}-\tau)}\right]^2
\varepsilon_{\kappa}^2(x)\,dx\right)
\,d\tau\,\right)\,\right]^{\half}\\
\leq&\,\left[\,\sum_{\kappa=1}^{\infty}\left(\,
\sum_{\ell=1}^{\mathfrak m}\int_{\ssy\Delta_{\ell}}
e^{-2\lambda_{\kappa}^4(\tau_{\mathfrak m}-\tau)} \left(\,1-
e^{-\lambda_{\kappa}^4(\tau-\tau_{\ell-1})}\,\right)^2
\,d\tau\,\right)\,\right]^{\half}\\
\leq&\,\left[\,\sum_{\kappa=1}^{\infty} \left(\,1-e^{-\lambda_{\kappa}^4\,\dtau}\,\right)^2
\left(\, \int_0^{\tau_{\mathfrak m}} e^{-2\lambda_{\kappa}^4(\tau_{\mathfrak m}-\tau)}
\,d\tau\,\right)\,\right]^{\half}\\
\leq&\,\tfrac{1}{\sqrt{2}}\,\left(\,\sum_{\kappa=1}^{\infty}
\tfrac{1-e^{-2\lambda_{\kappa}^4\,\dtau}}{\lambda_{\kappa}^4}\,\right)^{\half},
\end{split}
\end{equation*}
from which, applying (3.13) in \cite{KZ2010}, we obtain
\begin{equation}\label{Ydaspis952}
{\mathcal B}_{\mathfrak m}\leq\,C\,
\,\,\Delta\tau^{\frac{3}{8}}.
\end{equation}
\par
Finally, the estimate \eqref{Dinner0} follows easily combining \eqref{EMP_2010_2},
\eqref{trabajito} and \eqref{Ydaspis952}.
\end{proof}
%
%
%
%
%
\section{Fully-Discrete Approximations}\label{SECTION44}
\subsection{The deterministic problem}\label{SECTION44a}
In this section we construct and analyze finite element approximations, $(W_h^m)_{m=0}^{\ssy M}$,
of the modified Crank-Nicolson time-discrete approximations defined in Section~\ref{section3A}.
%
%
\par
Let $p=2$ or $3$. We begin, by setting
\begin{equation}\label{CNFD1}
W_h^0:=P_hw_0
\end{equation}
and then by finding $W_h^1\in{\sf S}_h^p$ such that
\begin{equation}\label{CNFD12}
W_h^1-W_h^0+\tfrac{\dtau}{2}\,B_hW_h^1 =0.
\end{equation}
Finally, for $m=2,\dots,M$, we specify $W_h^m\in{\sf S}_h^p$ such
that
\begin{equation}\label{CNFD2}
W_h^m-W_h^{m-1}+\dtau\,B_hW_h^{m-\frac{1}{2}}=0.
\end{equation}
\par
First,  we show a discrete in time $L^2_t(L^2_x)$ a priori estimate of time averages
of the nodal error between the modified Crank-Nicolson time-discrete approximations and
the modified Crank-Nicolson fully-discrete approximations defined above.
%
%
\begin{proposition}\label{Aygo_Kokora}
Let $p=2$ or $3$, $w$ be the solution of the problem \eqref{Det_Parab},
$(W^m)_{m=0}^{\ssy M}$ be the Crank-Nicolson time-discrete
approxi\-mations of $w$ defined by \eqref{CNDet1}--\eqref{CNDet2},
and $(W_h^m)_{m=0}^{\ssy M}$ be the modified Crank-Nicolson
fully-discrete approximations of $w$ specified by \eqref{CNFD1}--\eqref{CNFD2}.
Then, there exists a constant $C>0$,
independent of $h$ and $\dtau$, such that
\begin{equation}\label{tiger_river1}
\left(\,\dtau\,\|W^{1}-W_h^{1}\|^2_{\ssy 0,D}
+\dtau\sum_{m=2}^{\ssy M}
\|W^{m-\frac{1}{2}}-W_h^{m-\frac{1}{2}}\|^2_{\ssy
0,D}\,\right)^{\frac{1}{2}}\leq \,C\,\,h^{p\,\theta}
\,\,\|w_0\|_{\ssy {\bfdot H}^{3\theta-2}}
\end{equation}
for all $\theta\in[0,1]$ and $w_0\in {\bfdot H}^2(D)$.
%
%
%
%
%
%
\end{proposition}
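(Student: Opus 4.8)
The plan is to prove \eqref{tiger_river1} at the two endpoints $\theta=1$ and $\theta=0$ and then fill in $\theta\in[0,1]$ by interpolation, exactly as in Proposition~\ref{DetPropo1} and Proposition~\ref{Propo_Boom1}. Set $\Theta^m:=W^m-W_h^m$, so that $\Theta^{m-\half}=W^{m-\half}-W_h^{m-\half}$. Applying $T_{\ssy B}$ to \eqref{CNDet12}, \eqref{CNDet2} and $T_{\ssy B,h}=B_h^{-1}P_h$ to \eqref{CNFD12}, \eqref{CNFD2} (using $T_{\ssy B,h}B_h\chi=\chi$ on ${\sf S}_h^p$ and $T_{\ssy B,h}P_h=T_{\ssy B,h}$), subtracting the resulting operator equations, and adding and subtracting $T_{\ssy B,h}(W^m-W^{m-1})$ to absorb the mismatch between $T_{\ssy B}$ and $T_{\ssy B,h}$, I would obtain the error equations
\begin{equation*}
T_{\ssy B,h}(\Theta^m-\Theta^{m-1})+\dtau\,\Theta^{m-\half}=\omega^m,\quad m=2,\dots,M,
\qquad T_{\ssy B,h}\Theta^1+\tfrac{\dtau}{2}\,\Theta^1=\omega^1,
\end{equation*}
with $\omega^m:=(T_{\ssy B,h}-T_{\ssy B})(W^m-W^{m-1})$ and the convention $W^0:=w_0$.

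The energy step then mirrors Proposition~\ref{DetPropo1}. Taking the $L^2(D)$-inner product of these equations with $\Theta^{m-\half}$ and $\Theta^1$, respectively, I would use \eqref{adjo2}, which makes $(f,g)\mapsto(T_{\ssy B,h}f,g)_{\ssy 0,D}=(\partial_x^2T_{\ssy B,h}f,\partial_x^2T_{\ssy B,h}g)_{\ssy 0,D}$ a symmetric, positive-semidefinite bilinear form, to telescope the differences, and then Cauchy--Schwarz together with the arithmetic-mean inequality to absorb the $\Theta$-terms. This yields the master bound
\begin{equation*}
\dtau\,\|\Theta^1\|_{\ssy 0,D}^2+\dtau\sum_{m=2}^{\ssy M}\|\Theta^{m-\half}\|_{\ssy 0,D}^2
\leq\,C\,\dtau^{-1}\sum_{m=1}^{\ssy M}\|\omega^m\|_{\ssy 0,D}^2,
\end{equation*}
with $C$ independent of $h$ and $\dtau$. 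For $\theta=1$ I would estimate the consistency term by the finite element error estimate \eqref{ARA1}, $\|\omega^m\|_{\ssy 0,D}\leq C\,h^p\,\|W^m-W^{m-1}\|_{\ssy -1,D}$, and then use \eqref{minus_equiv} with $W^m-W^{m-1}=-\dtau\,\partial_x^4W^{m-\half}$ (and its $m=1$ analogue from \eqref{CNDet12}) to get $\|W^m-W^{m-1}\|_{\ssy -1,D}\leq C\,\dtau\,\|W^{m-\half}\|_{\ssy{\bfdot H}^3}$. Inserting this into the master bound reduces the claim to the discrete smoothing estimate
\begin{equation*}
\dtau\,\|W^1\|_{\ssy{\bfdot H}^3}^2+\dtau\sum_{m=2}^{\ssy M}\|W^{m-\half}\|_{\ssy{\bfdot H}^3}^2\leq\,C\,\|w_0\|_{\ssy{\bfdot H}^1}^2,
\end{equation*}
which I would prove from the spectral representation \eqref{EMP_2010_1}: with $a_k:=\tfrac{\dtau}{2}\lambda_k^4$ the $k$-th Fourier coefficient of $W^m$ equals $\tfrac{(1-a_k)^{m-1}}{(1+a_k)^m}(w_0,\varepsilon_k)_{\ssy 0,D}$, and a geometric-series summation gives $\dtau\sum_{m\ge1}\bigl(\tfrac{(1-a_k)^{m-1}}{(1+a_k)^m}\bigr)^2\leq C\,\lambda_k^{-4}$, so the weight $\lambda_k^6$ of the ${\bfdot H}^3$-norm collapses to $\lambda_k^2$ and reproduces $\|w_0\|_{\ssy{\bfdot H}^1}^2$.

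For $\theta=0$ the master bound is too lossy (bounding $\omega^m$ by stability would force a $\|w_0\|_{\ssy{\bfdot H}^0}$ right-hand side), so I would instead use the triangle inequality $\|\Theta^{m-\half}\|_{\ssy 0,D}^2\leq 2\|W^{m-\half}\|_{\ssy 0,D}^2+2\|W_h^{m-\half}\|_{\ssy 0,D}^2$ together with $L^2$-stability of both schemes. The time-discrete bound is already \eqref{May2015_23}, and its fully-discrete counterpart, $\dtau\,\|W_h^1\|_{\ssy 0,D}^2+\dtau\sum_{m=2}^{\ssy M}\|W_h^{m-\half}\|_{\ssy 0,D}^2\leq C\,\|w_0\|_{\ssy{\bfdot H}^{-2}}^2$, follows by the same energy argument applied to \eqref{CNFD12}, \eqref{CNFD2} through \eqref{adjo2}, \eqref{innerproduct} and \eqref{TB_bound}, with $W_h^0=P_hw_0$ controlled in $\|\cdot\|_{\ssy -2,D}$; this gives \eqref{tiger_river1} for $\theta=0$. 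The main obstacle is precisely the $\theta=1$ discrete smoothing estimate: the admissible datum norm is only $\|w_0\|_{\ssy{\bfdot H}^1}$, which is at the threshold where the parabolic smoothing of the Crank--Nicolson operators must be exploited quantitatively through the geometric-series cancellation and cannot be read off from the bare stability of the scheme — the difficulty flagged in the introduction. One must also verify that the two endpoints, proved by genuinely different mechanisms, both produce bounds of the interpolation-compatible form $h^{p\theta}\,\|w_0\|_{\ssy{\bfdot H}^{3\theta-2}}$ before invoking interpolation.
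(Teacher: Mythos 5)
Your proposal is correct, and its architecture coincides with the paper's proof almost step for step: the same error equations (your $\omega^m$ is exactly $\dtau\,\xi_m$, with $\xi_m=(T_{\ssy B}-T_{\ssy B,h})\partial_x^4W^{m-\half}$ in the paper's notation, cf.\ \eqref{SaintMinas_1a}--\eqref{xi_def_2}), the same telescoping energy bound through \eqref{adjo2}, the same use of \eqref{ARA1} at $\theta=1$, the same triangle-inequality-plus-$L^2$-stability argument at $\theta=0$ (the paper's \eqref{Frago_0}--\eqref{Frago_4} combined with \eqref{May2015_23}), and interpolation between the two endpoints. The one genuine divergence is the discrete smoothing estimate $\dtau\,\|W^1\|_{\ssy{\bfdot H}^3}^2+\dtau\sum_{m=2}^{\ssy M}\|W^{m-\half}\|_{\ssy{\bfdot H}^3}^2\leq C\,\|w_0\|_{\ssy{\bfdot H}^1}^2$, which you correctly single out as the crux of the $\theta=1$ case: the paper derives it variationally, testing \eqref{CNDet2} with $\partial_x^2W^{m-\half}$ and \eqref{CNDet12} with $\partial_x^2W^1$ and telescoping (its \eqref{citah6} and \eqref{citah6a}), whereas you compute it exactly from the spectral representation \eqref{EMP_2010_1} via the geometric series $\dtau\sum_{m\geq1}(1-a_k)^{2(m-1)}(1+a_k)^{-2m}=\tfrac{1}{2}\lambda_k^{-4}$ with $a_k=\tfrac{\dtau}{2}\lambda_k^4$. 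Both mechanisms are valid; the spectral route yields an identity with a sharp constant and makes the cancellation transparent, while the paper's energy route never diagonalizes the scheme and therefore transfers to situations where explicit eigenfunctions are unavailable (indeed it is the same argument that the paper reuses at the discrete level in \eqref{X_Hadji1}--\eqref{X_Hadji3}, and that generalizes to several space dimensions, cf.\ \cite{KZ2013a}). Two small points to tidy in your write-up: your geometric series controls $\|W^m\|_{\ssy{\bfdot H}^3}$, so insert the convexity step $\|W^{m-\half}\|_{\ssy{\bfdot H}^3}^2\leq\tfrac{1}{2}\big(\|W^m\|_{\ssy{\bfdot H}^3}^2+\|W^{m-1}\|_{\ssy{\bfdot H}^3}^2\big)$ (harmless, since $W^0=w_0$ never enters these sums); and note that for $m=1$ your $\omega^1$ carries the factor $\tfrac{\dtau}{2}$ rather than $\dtau$, which only changes the constant in your master bound.
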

%
%
%
%
%
%
%
\begin{proof}
We will get the error estimate \eqref{tiger_river1} by interpolation after proving it for $\theta=1$ and 
$\theta=0$ (cf. \cite{KZ2010}). In the sequel,  we will use the symbol
$C$ to denote a generic constant that is independent of $\dtau$ and may changes value from
one line to the other.
\par\noindent
\vskip0.3truecm
\par
\textbullet\, {\sf Case} $\theta=1$:
Letting ${\sf \Theta}^{\ell}:=W^{\ell}-W_h^{\ell}$ for $\ell=0,\dots,M$, we use
\eqref{CNDet12}, \eqref{CNFD12}, \eqref{CNDet2} and \eqref{CNFD2},
to arrive at the following error equations:
\begin{equation}\label{SaintMinas_1a}
T_{\ssy B,h}({\sf\Theta}^1-{\sf\Theta}^{0})+\tfrac{\dtau}{2}
\,{\sf\Theta}^{1}=\tfrac{\dtau}{2}\,\xi_1
\end{equation}
and
\begin{equation}\label{SaintMinas_1}
T_{\ssy B,h}({\sf\Theta}^m-{\sf\Theta}^{m-1})+\Delta\tau\,{\sf\Theta}^{m-\frac{1}{2}}
=\dtau\,\xi_m,\quad m=2,\dots,M,
\end{equation}
where
\begin{equation}\label{xi_def_1}
\xi_1:=(T_{\ssy B}-T_{\ssy B,h})\partial_x^4W^{1}
\end{equation}
and
\begin{equation}\label{xi_def_2}
\xi_{\ell}:=(T_{\ssy B}-T_{\ssy B,h})\partial_x^4W^{\ell-\frac{1}{2}},
\quad\ell=2,\dots,M.
\end{equation}
Taking the $L^2(D)-$inner product of both sides of
\eqref{SaintMinas_1} with ${\sf \Theta}^{m-\frac{1}{2}}$ and then using \eqref{adjo2},
the Cauchy-Schwarz inequality along with the arithmetic mean inequality, we obtain
\begin{equation*}
\|\partial_x^2(T_{\ssy B,h}{\sf\Theta}^m)\|_{\ssy 0,D}^2
-\|\partial_x^2(T_{\ssy B,h}{\sf\Theta}^{m-1})\|_{\ssy 0,D}^2
+\dtau\,\|{\sf\Theta}^{m-\frac{1}{2}}\|_{\ssy
0,D}^2\leq{\Delta\tau}\,\|\xi_m\|_{\ssy 0,D}^2,\quad m=2,\dots,M.
\end{equation*}
After summing with respect to $m$ from $2$ up to $M$, the relation above yields
\begin{equation*}
\dtau\,\sum_{m=2}^{\ssy M}\|{\sf\Theta}^{m-\frac{1}{2}}\|_{\ssy 0,D}^2
\leq\,\dtau\sum_{m=2}^{\ssy M}\left\|\xi_m\right\|_{\ssy 0,D}^2
+\|\partial_x^2(T_{\ssy B,h}{\sf\Theta}^{1})\|_{\ssy 0,D}^2,
\end{equation*}
which, easily, yields that
\begin{equation}\label{citah3}
\dtau\,\|{\sf\Theta}^1\|_{\ssy 0,D}^2
+\dtau\,\sum_{m=2}^{\ssy M}\|{\sf\Theta}^{m-\frac{1}{2}}\|_{\ssy 0,D}^2
\leq\,\|\partial_x^2(T_{\ssy B,h}{\sf\Theta}^{1})\|_{\ssy 0,D}^2
+\dtau\,\|{\sf\Theta}^1\|^2_{\ssy 0,D}
+\dtau\sum_{m=2}^{\ssy M}\left\|\xi_m\right\|_{\ssy 0,D}^2.
\end{equation}
Observing that $T_{\ssy B,h}{\sf\Theta}^0=0$, we
take the $L^2(D)-$inner product of both sides of
\eqref{SaintMinas_1a} with ${\sf \Theta}^{1}$ and then 
use the Cauchy-Schwarz inequality  along with the arithmetic mean inequality, to get
\begin{equation}\label{July2015_1}
\|\partial_x^2(T_{\ssy B,h}{\sf\Theta^1})\|_{\ssy 0,D}^2+\tfrac{\dtau}{4}\,
\|{\sf\Theta^1}\|^2_{\ssy 0,D}\leq\,\tfrac{\dtau}{4}\,\|\xi_1\|_{\ssy 0,D}^2.
\end{equation}
Thus, using \eqref{citah3}, \eqref{July2015_1},
\eqref{xi_def_1}, \eqref{xi_def_2} and \eqref{ARA1},
we easily conclude that
\begin{equation}\label{July2015_2}
\begin{split}
\dtau\,\|{\sf\Theta}^1\|_{\ssy 0,D}^2
+\dtau\,\sum_{m=2}^{\ssy M}\|{\sf\Theta}^{m-\frac{1}{2}}\|_{\ssy 0,D}^2
\leq&\,\dtau\,\|\xi_1\|_{\ssy 0,D}^2
+\dtau\,\sum_{m=2}^{\ssy M}\left\|\xi_m\right\|_{\ssy 0,D}^2\\
\leq&\,C\,h^{2p}\,\left(\,\dtau\,\|\partial^3_xW^1\|^2_{\ssy 0,D}
+\dtau\,\sum_{m=2}^{\ssy M} 
\|\partial_x^3W^{m-\frac{1}{2}}\|_{\ssy 0,D}^2\right).
\end{split}
\end{equation}
%
%
%
Taking the $L^2(D)-$inner product of \eqref{CNDet2} with
$\partial_x^2W^{m-\frac{1}{2}}$,
and then integrating by parts and summing with respect to $m$,
from $2$ up to $M$, it follows that
\begin{equation*}
\|\partial_xW^{\ssy M}\|_{\ssy 0,D}^2-\|\partial_xW^1\|_{\ssy 0,D}^2
+2\,\dtau\,\sum_{m=2}^{\ssy M}
\|\partial_x^3W^{m-\frac{1}{2}}\|_{\ssy 0,D}^2=0
\end{equation*}
which yields
\begin{equation}\label{citah6}
\dtau\,\|\partial_x^3W^1\|_{\ssy 0,D}^2
+\sum_{m=2}^{\ssy M}
\dtau\,\|\partial_x^3W^{m-\frac{1}{2}}\|_{\ssy 0,D}^2
\leq\,\tfrac{1}{2}\,\|\partial_xW^1\|_{\ssy 0,D}^2
+\dtau\,\|\partial_x^3W^1\|_{\ssy 0,D}^2.
\end{equation}
Now, take the $L^2(D)-$inner product of \eqref{CNDet12}
with $\partial_x^2W^1$, and then
integrate by parts and use \eqref{innerproduct} to get
\begin{equation*}
\|\partial_xW^1\|^2_{\ssy 0,D}-\|\partial_xW^0\|_{\ssy 0,D}^2
+\dtau\,\|\partial_x^3W^1\|_{\ssy 0,D}^2\leq0,
\end{equation*}
which, along with \eqref{H_equiv}, yields
\begin{equation}\label{citah6a}
\|\partial_xW^1\|_{\ssy 0,D}^2
+\dtau\,\|\partial_x^3W^1\|_{\ssy 0,D}^2\leq\,\|w_0\|_{\ssy {\bfdot H}^1}^2.
\end{equation}
Thus, combining \eqref{July2015_2}, \eqref{citah6} and \eqref{citah6a},
we obtain \eqref{tiger_river1} for $\theta=1$.
%
%
%
\par\noindent
\vskip0.3truecm
\par
\textbullet\, {\sf Case} $\theta=0$:
From \eqref{CNFD12} and \eqref{CNFD2}, it follows that
\begin{equation}\label{Frago_0}
T_{\ssy B,h}(W_h^1-W_h^0)+\tfrac{\dtau}{2}\,W_h^1=0
\end{equation}
and
\begin{equation}\label{Frago_1}
T_{\ssy B,h}(W_h^m-W_h^{m-1})+\dtau\,W_h^{m-\frac{1}{2}}=0,
\quad m=2,\dots,M.
\end{equation}
Taking the $L^2(D)-$inner product
of \eqref{Frago_1} with $W_h^{m-\frac{1}{2}}$ and using \eqref{adjo2},
we have
\begin{equation*}
\|\partial_x^2(T_{\ssy B,h}W_h^m)\|_{\ssy 0,D}^2
-\|\partial_x^2(T_{\ssy B,h}W_h^{m-1})\|_{\ssy 0,D}^2
+2\,\dtau\,\|W_h^{m-\frac{1}{2}}\|_{\ssy 0,D}^2=0, \quad
m=2,\dots,M,
\end{equation*}
which, after summing with respect to $m$ from $2$ up to $M$, yields
\begin{equation}\label{Frago_2}
\dtau\,\|W_h^1\|^2_{\ssy 0,D}
+\sum_{m=2}^{\ssy M} \Delta\tau\,\|W_h^{m-\frac{1}{2}}\|_{\ssy
0,D}^2 \leq\tfrac{1}{2}\,\|\partial_x^2(T_{\ssy B,h}W_h^1)\|_{\ssy
0,D}^2+\dtau\,\|W_h^1\|^2_{\ssy 0,D}.
\end{equation}
Now, take the $L^2(D)-$inner product
of \eqref{Frago_0} with $W_h^1$ and use \eqref{innerproduct}
and \eqref{CNFD1}, to have
\begin{equation}\label{Frago_3}
\begin{split}
\|\partial_x^2(T_{\ssy B,h}W_h^1)\|_{\ssy 0,D}^2
+\dtau\,\|W_h^{1}\|_{\ssy 0,D}^2\leq&\,
\|\partial_x^2(T_{\ssy B,h}P_hw_0)\|_{\ssy 0,D}^2\\
\leq&\,\|\partial_x^2(T_{\ssy B,h}w_0)\|_{\ssy 0,D}^2.\\
\end{split}
\end{equation}
Combining \eqref{Frago_2}, \eqref{Frago_3}, \eqref{TB_bound}
and \eqref{minus_equiv}, we obtain
\begin{equation}\label{Frago_4}
\left(\dtau\,\|W_h^1\|^2_{\ssy 0,D}
+\sum_{m=2}^{\ssy M}\dtau
\,\|W_h^{m-\frac{1}{2}}\|_{\ssy 0,D}^2\right)^{\half}
\leq\,C\,\|w_0\|_{\ssy{\bfdot H}^{-2}}.
\end{equation}
Finally, combine \eqref{Frago_4} with
\eqref{May2015_23} to get \eqref{tiger_river1} for $\theta=0$.
%
%
\end{proof}
%
%
\par
Next,  we derive a discrete in time $L^2_t(L^2_x)$ a priori estimate
of the nodal error between the modified Crank-Nicolson time-discrete approximations and
the modified Crank-Nicolson fully-discrete approximations.
%
%
\begin{proposition}\label{X_Aygo_Kokora}
Let $p=2$ or $3$, $w$ be the solution of the problem \eqref{Det_Parab},
$(W^m)_{m=0}^{\ssy M}$ be the modified Crank-Nicolson time-discrete
approximations of $w$ defined by \eqref{CNDet1}--\eqref{CNDet2},
and $(W_h^m)_{m=0}^{\ssy M}$ be the modified Crank-Nicolson finite element
approximations of $w$ specified by \eqref{CNFD1}--\eqref{CNFD2}.
Then, there exists a constant $C>0$,
independent of $h$ and $\dtau$, such that
\begin{equation}\label{X_tiger_river1}
\left(\,\dtau\sum_{m=1}^{\ssy M}
\|W^{m}-W_h^{m}\|^2_{\ssy
0,D}\,\right)^{\frac{1}{2}}\leq \,C\,\left[\,\dtau^{\frac{\delta}{2}}
\,\|w_0\|_{\ssy {\bfdot H}^{2(\delta-1)}}
+h^{p\,\theta}
\,\,\|w_0\|_{\ssy {\bfdot H}^{3\theta-2}}\right]
\end{equation}
for all $\delta$, $\theta\in[0,1]$ and $w_0\in {\bfdot H}^2(D)$.
\end{proposition}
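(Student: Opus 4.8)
The plan is to prove \eqref{X_tiger_river1} by interpolation, establishing the endpoint cases that decouple the time-discretization error from the space-discretization error. The key observation is that the nodal quantity $W^m-W_h^m$ differs from the half-step quantity $W^{m-\half}-W_h^{m-\half}$ controlled in Proposition~\ref{Aygo_Kokora}, so I must reconcile the two. First I would introduce the triangle-inequality splitting
\begin{equation*}
\left(\dtau\sum_{m=1}^{\ssy M}\|W^m-W_h^m\|_{\ssy 0,D}^2\right)^{\half}
\leq {\sf S}_1+{\sf S}_2+{\sf S}_3,
\end{equation*}
patterned on \eqref{Hadji0}, where ${\sf S}_2$ is precisely the half-step error $\bigl(\dtau\|W^1-W_h^1\|_{\ssy 0,D}^2+\dtau\sum_{m=2}^{\ssy M}\|W^{m-\half}-W_h^{m-\half}\|_{\ssy 0,D}^2\bigr)^{\half}$ bounded directly by \eqref{tiger_river1}, while ${\sf S}_1:=\bigl(\dtau\sum_{m=2}^{\ssy M}\|W^m-W^{m-\half}\|_{\ssy 0,D}^2\bigr)^{\half}$ and ${\sf S}_3:=\bigl(\dtau\sum_{m=2}^{\ssy M}\|W_h^m-W_h^{m-\half}\|_{\ssy 0,D}^2\bigr)^{\half}$ are the differences between nodal and half-step values of the two approximating sequences separately.

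The term ${\sf S}_1$ is already under control: it equals $\tfrac12\bigl(\dtau\sum_{m=2}^{\ssy M}\|W^m-W^{m-1}\|_{\ssy 0,D}^2\bigr)^{\half}$, which is exactly ${\sf S}_{\ssy B}$ from the proof of Proposition~\ref{Propo_Boom1}, so I can reuse the bounds \eqref{Hadji4} and \eqref{Hadji8}, giving ${\sf S}_1\leq C\,\dtau^{\half}\|w_0\|_{\ssy 0,D}$ in the high-regularity case and ${\sf S}_1\leq C\,\|w_0\|_{\ssy{\bfdot H}^{-2}}$ in the low-regularity case. For ${\sf S}_3$ I would mimic that argument at the fully-discrete level: taking the $L^2(D)$-inner product of \eqref{CNFD2} with $(W_h^m-W_h^{m-1})$ and integrating by parts using $B_h$ yields a telescoping identity analogous to \eqref{Hadji1}, producing $\dtau\sum_{m=2}^{\ssy M}\|W_h^m-W_h^{m-1}\|_{\ssy 0,D}^2\leq\tfrac{\dtau^2}{2}\|\partial_x^2(\text{something involving }W_h^1)\|_{\ssy 0,D}^2$, and then I would bound $\dtau\,\|B_h^{1/2}W_h^1\|$ via the discrete energy estimate coming from \eqref{CNFD12} together with \eqref{TB_bound} and \eqref{minus_equiv}. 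This step is where the discrete operators $B_h$, $T_{\ssy B,h}$ and the stability estimate \eqref{TB_bound} do the real work, replacing the continuous $\partial_x^4$ manipulations of Proposition~\ref{Propo_Boom1}.

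For the endpoint cases: in the case $\theta=\delta=1$ all three pieces are ${\mathcal O}(\dtau^{\half})\|w_0\|_{\ssy 0,D}$ (the ${\sf S}_2$ piece via \eqref{tiger_river1} is even of order $h^p$, hence absorbed), and in the case $\theta=\delta=0$ all three are ${\mathcal O}(\|w_0\|_{\ssy{\bfdot H}^{-2}})$, matching the right-hand side of \eqref{X_tiger_river1}. Combining the two endpoints by interpolation — linearly in the two parameters $\delta$ and $\theta$ that are coupled in the statement — delivers the mixed bound with the two separate factors $\dtau^{\delta/2}\|w_0\|_{\ssy{\bfdot H}^{2(\delta-1)}}$ and $h^{p\theta}\|w_0\|_{\ssy{\bfdot H}^{3\theta-2}}$. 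The main obstacle I anticipate is the ${\sf S}_3$ estimate: unlike the continuous case, controlling $\dtau\,\|\partial_x^2 T_{\ssy B,h}W_h^1\|$ or the discrete analogue requires care because $W_h^1$ lives in ${\sf S}_h^p$ and the natural energy is measured through $B_h$ rather than $\partial_x^4$; I expect to lean on the adjointness relation \eqref{adjo2} and the stability bound \eqref{TB_bound} precisely to pass from the discrete biharmonic energy back to a negative-norm bound on $w_0$, exactly as was done in \eqref{Frago_2}--\eqref{Frago_4} of the preceding proposition.
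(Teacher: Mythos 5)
Your overall strategy is the paper's own: your splitting ${\sf S}_1+{\sf S}_2+{\sf S}_3$ is exactly its ${\mathcal S}_{\ssy B}+{\mathcal S}_{\ssy C}+{\mathcal S}_{\ssy D}$, the bounds for ${\sf S}_1$ are recycled from \eqref{Hadji4} and \eqref{Hadji8}, ${\sf S}_2$ is dispatched by \eqref{tiger_river1}, and ${\sf S}_3$ starts from the same telescoping identity, which reduces everything to the single quantity $\dtau\,\|\partial_x^2W_h^1\|_{\ssy 0,D}$. The genuine soft spot is your mechanism for the low-regularity endpoint of ${\sf S}_3$. You propose to bound $\dtau\,\|B_h^{1/2}W_h^1\|_{\ssy 0,D}=\dtau\,\|\partial_x^2W_h^1\|_{\ssy 0,D}$ ``exactly as was done in \eqref{Frago_2}--\eqref{Frago_4}'', i.e. via $T_{\ssy B,h}$, \eqref{adjo2} and \eqref{TB_bound}. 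But that argument tests the $T_{\ssy B,h}$-form of the scheme and delivers $\|\partial_x^2(T_{\ssy B,h}W_h^1)\|_{\ssy 0,D}^2+\dtau\,\|W_h^1\|_{\ssy 0,D}^2\leq C\,\|w_0\|_{\ssy{\bfdot H}^{-2}}^2$: it controls the \emph{smoothed} quantity $\partial_x^2(T_{\ssy B,h}W_h^1)$ and $\dtau^{\half}\|W_h^1\|_{\ssy 0,D}$, not $\dtau\,\|\partial_x^2W_h^1\|_{\ssy 0,D}$. Passing from the former to the latter would require an inverse inequality $\|\partial_x^2\chi\|_{\ssy 0,D}\leq C\,h^{-2}\|\chi\|_{\ssy 0,D}$ and hence a mesh condition coupling $\dtau$ and $h$, which is not assumed. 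What actually works --- and is what the paper does in \eqref{X_Hadji5}--\eqref{X_Hadji6}, mirroring \eqref{Hadji5}--\eqref{Hadji6} of Proposition~\ref{Propo_Boom1} --- is the direct duality argument on the untransformed scheme: test \eqref{CNFD12} with $W_h^1$, use $(P_hw_0,W_h^1)_{\ssy 0,D}=(w_0,W_h^1)_{\ssy 0,D}$, bound this by $\|w_0\|_{\ssy -2,D}\,\|W_h^1\|_{\ssy 2,D}$, and invoke \eqref{minus_equiv}, \eqref{H_equiv} and the identity $\|\partial_x^2W_h^1\|_{\ssy 0,D}=\|W_h^1\|_{\ssy{\bfdot H}^2}$ (valid because $W_h^1\in H^2(D)\cap H^1_0(D)$, cf. \eqref{X_evid}) so that the $\|W_h^1\|_{\ssy{\bfdot H}^2}$ factor can be absorbed, yielding $\dtau^2\,\|\partial_x^2W_h^1\|^2_{\ssy 0,D}\leq C\,\|w_0\|^2_{\ssy{\bfdot H}^{-2}}$. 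Neither $T_{\ssy B,h}$ nor \eqref{TB_bound} plays a role in this step.

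A second, smaller, issue is the assembly. In the statement $\delta$ and $\theta$ are \emph{independent}, so you should not interpolate the whole sum between the coupled endpoints $\theta=\delta=1$ and $\theta=\delta=0$; that only proves the diagonal case $\theta=\delta$. The correct assembly (the paper's) interpolates only ${\sf S}_1+{\sf S}_3$ in $\delta$ between their two endpoint bounds, while ${\sf S}_2\leq C\,h^{p\theta}\,\|w_0\|_{\ssy{\bfdot H}^{3\theta-2}}$ is already available for every $\theta\in[0,1]$ from Proposition~\ref{Aygo_Kokora} with no further interpolation; adding the two gives \eqref{X_tiger_river1} for all independent $\delta,\theta$. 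Relatedly, your claim that the $h^{p}$ contribution of ${\sf S}_2$ is ``absorbed'' at the $\theta=1$ endpoint is not meaningful, since $h$ and $\dtau$ are independent parameters: the two terms simply appear side by side in the final bound.
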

%
%
%
%
%
\begin{proof}
The proof is based on the estimation of the terms
in the right hand side of the following triangle inequality:
\begin{equation}\label{X_Hadji0}
\left(\dtau\sum_{m=1}^{\ssy M}\|W^m-W_h^m\|_{\ssy 0,D}^2\right)^{\frac{1}{2}}\leq
{\mathcal S}_{\ssy B}+{\mathcal S}_{\ssy C}+{\mathcal S}_{\ssy D}
\end{equation}
where
\begin{equation*}
\begin{split}
{\mathcal S}_{\ssy B}:=&\,\left(\dtau\sum_{m=2}^{\ssy M}\|W^m-W^{m-\half}\|_{\ssy 0,D}^2\right)^{\frac{1}{2}},\\
{\mathcal S}_{\ssy C}:=&\,\left(\dtau\,\|W^1-W_h^1\|_{\ssy 0,D}^2
+\dtau\sum_{m=2}^{\ssy M}\|W^{m-\half}-W_h^{m-\half}\|_{\ssy 0,D}^2\right)^{\frac{1}{2}},\\
{\mathcal S}_{\ssy D}:=&\,\left(\dtau\sum_{m=2}^{\ssy M}\|W_h^{m-\half}-W_h^m\|_{\ssy 0,D}^2\right)^{\frac{1}{2}}.\\
\end{split}
\end{equation*}
In the sequel,  we will use the symbol $C$ to denote a generic constant that is
independent of $\dtau$ and may changes value from one line to the other.
\par
Taking the $L^2(D)-$inner product of both sides of \eqref{CNFD2} with $(W_h^m-W_h^{m-1})$,
we have
\begin{equation}\label{X_Hadji1}
\|W_h^m-W_h^{m-1}\|^2_{\ssy 0,D}+\tfrac{\dtau}{2}\,\left(\,\|\partial_x^2W_h^m\|_{\ssy 0,D}^2
-\|\partial_x^2W_h^{m-1}\|_{\ssy 0,D}^2\,\right)=0,\quad m=2,\dots,M.
\end{equation}
After summing both sides of \eqref{X_Hadji1} with respect to $m$ from $2$ up to $M$, we obtain
\begin{equation*}
\dtau\sum_{m=2}^{\ssy M}\|W_h^m-W_h^{m-1}\|^2_{\ssy 0,D}
+\tfrac{\dtau^2}{2}\,\left(\,\|\partial_x^2W_h^{\ssy M}\|_{\ssy 0,D}^2
-\|\partial_x^2W_h^{1}\|_{\ssy 0,D}^2\,\right)=0,
\end{equation*}
which yields
\begin{equation}\label{X_Hadji2}
\dtau\sum_{m=2}^{\ssy M}\|W_h^m-W_h^{m-1}\|^2_{\ssy 0,D}
\leq\tfrac{\dtau^2}{2}\,\|\partial_x^2W_h^{1}\|_{\ssy 0,D}^2.
\end{equation}
Taking the $L^2(D)-$inner product of both sides of \eqref{CNFD12} with $W_h^1$
and then using \eqref{innerproduct}, we obtain
\begin{equation*}
\|W_h^1\|_{\ssy 0,D}^2-\|W_h^0\|_{\ssy 0,D}^2+\dtau\,\|\partial_x^2W_h^1\|_{\ssy 0,D}^2\leq0
\end{equation*}
from which we conclude that
\begin{equation}\label{X_Hadji3}
\dtau\,\|\partial_x^2W_h^1\|_{\ssy 0,D}^2\leq\|w_0\|_{\ssy 0,D}^2.
\end{equation}
Thus, combining \eqref{X_Hadji2} and \eqref{X_Hadji3} we have
\begin{equation}\label{X_Hadji4}
\begin{split}
{\mathcal S}_{\ssy D}=&\,\tfrac{1}{2}\,
\left(\dtau\sum_{m=2}^{\ssy M}\|W_h^m-W_h^{m-1}\|_{\ssy 0,D}^2\right)^{\half}\\
\leq&\,\tfrac{1}{2\sqrt{2}}\,\dtau\,\|\partial_x^2W^1_h\|_{\ssy 0,D}\\
\leq&\,\dtau^{\half}\,\|w_0\|_{\ssy 0,D}.\\
\end{split}
\end{equation}
Taking again the $L^2(D)-$inner product of both sides of \eqref{CNFD12} with $W_h^1$
and then using \eqref{minus_equiv} and \eqref{H_equiv} along with the
arithmetic mean inequality,  we obtain
\begin{equation}\label{X_Hadji5}
\begin{split}
\|W_h^1\|_{\ssy 0,D}^2+\tfrac{\dtau}{2}
\,\|\partial_x^2W_h^1\|_{\ssy 0,D}^2=&\,(P_hw_0,W_h^1)_{\ssy 0,D}\\
=&\,(w_0,W_h^1)_{\ssy 0,D}\\
\leq&\,\|w_0\|_{\ssy -2,D}\,\|W_h^1\|_{\ssy 2,D}\\
\leq&\,C\,\|w_0\|_{\ssy{\bfdot H}^{-2}}\,\|W_h^1\|_{\ssy{\bfdot H}^2}\\
\leq&\,C\,\dtau^{-1}\|w_0\|^2_{\ssy{\bfdot H}^{-2}}
+\tfrac{\dtau}{4}\,\|W_h^1\|_{\ssy{\bfdot H}^2}^2.
\end{split}
\end{equation}
Observing that $\|\partial_x^2W_h^1\|_{\ssy 0,D}=\|W_h^1\|_{\ssy {\bfdot H}^2}$
(cf. \eqref{X_evid}), \eqref{X_Hadji5} yields that
\begin{equation}\label{X_Hadji6}
\dtau^2\,\|\partial_x^2W_h^1\|_{\ssy 0,D}^2\leq\,C\,\|w_0\|_{\ssy{{\bfdot H}^{-2}}}^2.
\end{equation}
Thus, combining \eqref{X_Hadji2} and \eqref{X_Hadji6}, we conclude that
\begin{equation}\label{X_Hadji8}
\begin{split}
{\mathcal S}_{\ssy D}
=&\,\tfrac{1}{2}\,
\left(\dtau\sum_{m=2}^{\ssy M}\|W_h^m-W_h^{m-1}\|_{\ssy 0,D}^2\right)^{\half}\\
\leq&\,\tfrac{1}{2\sqrt{2}}\,\dtau\,\|\partial_x^2W^1_h\|_{\ssy 0,D}\\
\leq&\,C\,\|w_0\|_{\ssy{{\bfdot H}^{-2}}}.
\end{split}
\end{equation}
Also, from \eqref{Hadji4} and \eqref{Hadji8}, we have
\begin{equation}\label{XX_Hadji4}
{\mathcal S}_{\ssy B}\leq\,C\,\dtau^{\half}\,\|w_0\|_{\ssy 0,D}
\end{equation}
and
\begin{equation}\label{XX_Hadji44}
{\mathcal S}_{\ssy B}\leq\,C\,\|w_0\|_{\ssy{{\bfdot H}^{-2}}}.
\end{equation}
By interpolation, from \eqref{X_Hadji4}, \eqref{XX_Hadji4}, \eqref{X_Hadji8}
and \eqref{XX_Hadji44}, we conclude that
\begin{equation}\label{XXX_1}
{\mathcal S}_{\ssy B}+{\mathcal S}_{\ssy D}\leq\,C\,\dtau^{\frac{\delta}{2}}
\,\|w_0\|_{\ssy{{\bfdot H}^{2(\delta-1)}}}
\quad\forall\,\delta\in[0,1].
\end{equation}
Finally, the estimate \eqref{tiger_river1} reads
\begin{equation}\label{XXX_2}
{\mathcal S}_{\ssy C}\leq\,C\,h^{p\,\theta}\,\|w_0\|_{\ssy{\bfdot H}^{3\theta-2}}
\quad\forall\,\theta\in[0,1].
\end{equation}
Thus, \eqref{X_tiger_river1} follows as a simple consequence of
\eqref{X_Hadji0}, \eqref{XXX_1} and \eqref{XXX_2}.
\end{proof}
%
%
\subsection{The stochastic problem}
The following lemma ensures the existence of a continuous
Green function for some discrete operators (cf. Lemma~5.2 in \cite{KZ2010},
Lemma~5.1 in \cite{KZ2013b}).
%
%
\begin{lemma}\label{prasinolhmma}
Let $p=2$ or $3$, $f\in L^2(D)$ and $g_h$, $\psi_h$,  $z_h\in{\sf S}_h^p$ such that
\begin{equation}\label{AuxEq_1}
\psi_h+\tfrac{\dtau}{2}\,B_h\psi_h=P_hf
\end{equation}
and
\begin{equation}\label{AuxEq_2}
z_h=g_h-\tfrac{\dtau}{2}\,B_hg_h.
\end{equation}
Then there exist functions $G_h$, ${\widetilde G}_h\in C({\overline {D\times D}})$
such that
\begin{equation}\label{prasinogreen1}
\psi_h(x)=\int_{\ssy D} G_h(x,y)\,f(y)\,dy\quad\forall\,x\in{\overline D}
\end{equation}
and
\begin{equation}\label{prasinogreen2}
z_h(x)=\int_{\ssy D}{\widetilde G}_h(x,y)\,g_h(y)\,dy\quad\forall\,x\in{\overline D}.
\end{equation}
\end{lemma}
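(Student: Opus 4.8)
The plan is to reduce everything to finite-dimensional linear algebra, using that ${\sf S}_h^p$ is finite dimensional and that its members are continuous on $\overline D$ (being piecewise polynomials; equivalently, by the embedding $H^2(D)\hookrightarrow C(\overline D)$ valid in one space dimension). I would fix a basis $(\phi_j)_{j=1}^{\ssy N_h}$ of ${\sf S}_h^p$ and introduce the mass matrix $\mathbf{M}:=\big[(\phi_i,\phi_j)_{\ssy 0,D}\big]_{i,j=1}^{\ssy N_h}$, which is a symmetric positive definite Gram matrix of linearly independent functions and hence invertible.

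The core of the argument is a single observation: for every linear operator $\mathcal{T}:{\sf S}_h^p\to{\sf S}_h^p$, encoded on the basis by $\mathcal{T}\phi_j=\sum_k\mathbf{T}_{kj}\phi_k$, and for every $v\in L^2(D)$, the element $\mathcal{T}P_hv\in{\sf S}_h^p$ admits a continuous kernel. Indeed, the coordinates of $P_hv$ are recovered from $\mathbf{M}\mathbf{c}=\mathbf{b}$ with $b_i=(P_hv,\phi_i)_{\ssy 0,D}=(v,\phi_i)_{\ssy 0,D}$, so that
\begin{equation*}
(\mathcal{T}P_hv)(x)=\sum_{j,k}\mathbf{T}_{kj}\phi_k(x)\sum_i(\mathbf{M}^{-1})_{ji}(v,\phi_i)_{\ssy 0,D}
=\int_{\ssy D}\Theta_{\ssy\mathcal{T}}(x,y)\,v(y)\,dy,
\end{equation*}
with $\Theta_{\ssy\mathcal{T}}(x,y):=\sum_{i,j,k}\mathbf{T}_{kj}(\mathbf{M}^{-1})_{ji}\,\phi_k(x)\,\phi_i(y)$. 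Being a finite linear combination of products $\phi_k(x)\,\phi_i(y)$ of functions continuous on $\overline D$, the kernel $\Theta_{\ssy\mathcal{T}}$ lies in $C(\overline{D\times D})$.

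With this in hand I would dispatch the two claims by choosing $\mathcal{T}$ appropriately. For \eqref{prasinogreen2}, since $g_h\in{\sf S}_h^p$ gives $P_hg_h=g_h$, the relation \eqref{AuxEq_2} reads $z_h=({\sf I}-\tfrac{\dtau}{2}B_h)P_hg_h$; taking $\mathcal{T}:={\sf I}-\tfrac{\dtau}{2}B_h$ and $v:=g_h$ yields \eqref{prasinogreen2} with ${\widetilde G}_h:=\Theta_{\ssy\mathcal{T}}$. For \eqref{prasinogreen1}, I would first note that ${\sf I}+\tfrac{\dtau}{2}B_h$ is invertible on ${\sf S}_h^p$: because $(B_h\chi,\chi)_{\ssy 0,D}=\|\partial_x^2\chi\|_{\ssy 0,D}^2\ge0$, this symmetric operator dominates the identity and is positive definite. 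Hence \eqref{AuxEq_1} gives $\psi_h=({\sf I}+\tfrac{\dtau}{2}B_h)^{-1}P_hf$, and taking $\mathcal{T}:=({\sf I}+\tfrac{\dtau}{2}B_h)^{-1}$ with $v:=f$ produces \eqref{prasinogreen1} with $G_h:=\Theta_{\ssy\mathcal{T}}$.

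I expect no genuine obstacle here, since the argument is entirely elementary. The only points requiring a moment's care are the invertibility of $\mathbf{M}$ and of ${\sf I}+\tfrac{\dtau}{2}B_h$ (both immediate from positivity) and the bookkeeping that converts a finite-dimensional linear map into the bilinear sum $\Theta_{\ssy\mathcal{T}}$; continuity of $G_h$ and ${\widetilde G}_h$ on $\overline{D\times D}$ is then automatic.
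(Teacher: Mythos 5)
Your proof is correct, and it reaches the conclusion by a genuinely more generic route than the paper's. The paper constructs a special basis $(\varphi_j)_{j=1}^{\nu_h}$ of ${\sf S}_h^p$ that is simultaneously $L^2(D)$-orthonormal and orthogonal with respect to the form $\gamma(\chi,\varphi)={\mathcal B}(\chi,\varphi)$ (a generalized symmetric eigenproblem, cited from Golub--Van Loan); in that basis the operators ${\sf I}+\tfrac{\Delta\tau}{2}B_h$ and ${\sf I}-\tfrac{\Delta\tau}{2}B_h$ are diagonal with entries $1\pm\tfrac{\Delta\tau}{2}\varepsilon_{h,j}$, and the Green functions come out in explicit spectral form, $G_h(x,y)=\sum_j\bigl(1+\tfrac{\Delta\tau}{2}\varepsilon_{h,j}\bigr)^{-1}\varphi_j(x)\,\varphi_j(y)$ and ${\widetilde G}_h(x,y)=\sum_j\bigl(1-\tfrac{\Delta\tau}{2}\varepsilon_{h,j}\bigr)\varphi_j(x)\,\varphi_j(y)$. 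You instead take an arbitrary basis, invert the mass matrix, and observe that \emph{any} linear operator ${\mathcal T}$ on ${\sf S}_h^p$ composed with $P_h$ has the separable continuous kernel $\Theta_{\mathcal T}(x,y)=\sum_{i,j,k}{\mathbf T}_{kj}({\mathbf M}^{-1})_{ji}\,\phi_k(x)\,\phi_i(y)$; your two supporting steps (invertibility of the Gram matrix ${\mathbf M}$, and positive definiteness of ${\sf I}+\tfrac{\Delta\tau}{2}B_h$ on the finite-dimensional space) are both sound, as is the reduction $z_h=({\sf I}-\tfrac{\Delta\tau}{2}B_h)P_hg_h$ via $P_hg_h=g_h$. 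What your version buys is elementarity and generality: no eigendecomposition is needed, and the observation applies verbatim to any operator on ${\sf S}_h^p$, in particular to the compositions ${\mathcal Q}_{h,m}=(\Lambda_h{\sf Y}_h)^{m-1}\Lambda_h$ whose Green functions are invoked later in the fully-discrete error analysis, so the ``cf.'' there would be immediate. What the paper's choice of basis buys is the closed diagonal formula for the kernels, which exhibits the discrete Green functions as exact analogues of the continuous ones ${\sf G}_{\ssy\Lambda}$, ${\sf G}_{\ssy{\sf Y}}$, ${\sf G}_{\ssy{\mathcal Q}_m}$ used in the time-discrete analysis; your argument yields existence and continuity of the kernels but no explicit formula. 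Both routes prove the lemma completely.
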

%
%
%
%
\begin{proof}
Let $\nu_h:=\text{\rm dim}({\sf S}_h^p)$ and $\gamma: {\sf S}_h^p\times{\sf S}_h^p\rightarrow\rset$
be an inner product on ${\sf S}_h^p$ defined by
$\gamma(\chi, \varphi) :={\mathcal B}(\chi,\varphi)_{\ssy 0,D}$
for $\chi,\varphi\in {\sf S}_h^p$.
Then, we can construct a basis $(\varphi_j)_{j=1}^{\ssy \nu_h}$ of ${\sf S}_h^p$
which is $L^2(D)-$orthonormal, i.e.,
$(\varphi_i,\varphi_j)_{\ssy 0,D}=\delta_{ij}$ for 
$i,j=1,\dots,\nu_h$,
and $\gamma-$orthogonal, i.e. there exist positive
$(\varepsilon_{h,j})_{j=1}^{\ssy \nu_h}$ such that
$\gamma(\varphi_i,\varphi_j)=\varepsilon_{h,i}\,\delta_{ij}$
for $i,j=1,\dots,\nu_h$ 
(see Sect. 8.7 in \cite{Golub}). Thus, there exist real numbers $(\mu_j)_{j=1}^{\nu_h}$
and $({\widetilde\mu}_j)_{j=1}^{\nu_h}$ such that $\psi_h=\sum_{j=1}^{\nu_h}\mu_j\,\varphi_j$
and $z_h=\sum_{j=1}^{\nu_h}{\widetilde\mu}_j\,\varphi_j$. Then, \eqref{AuxEq_1} and
\eqref{AuxEq_2} yield
\begin{equation}\label{Aram_1}
\mu_{\ell}=\tfrac{1}{1+\frac{\dtau}{2}\,\varepsilon_{h,\ell}}\,\,(f,\varphi_{\ell})_{\ssy 0,D},
\quad
{\widetilde\mu}_{\ell}=\left(1-\tfrac{\dtau}{2}\,\varepsilon_{h,\ell}\right)\,(g_h,\varphi_{\ell})_{\ssy 0,D}
\end{equation}
for $\ell=1,\dots,\nu_h$. Using \eqref{Aram_1} we conclude \eqref{prasinogreen1} and 
\eqref{prasinogreen2} with
$G_h(x,y)=\sum_{j=1}^{\nu_h}
\tfrac{1}{1+\frac{\dtau}{2}\,\varepsilon_{h,j}}\,\varphi_j(x)\,\varphi_j(y)$
and 
${\widetilde G}_h(x,y)=\sum_{j=1}^{\nu_h}
(1-\tfrac{\dtau}{2}\,\varepsilon_{h,j})\,\varphi_j(x)\,\varphi_j(y)$
for $x$, $y\in{\overline D}$.
\end{proof}
%
%
%
%
Now, we are ready to derive a discrete in time $L^{\infty}_t(L^2_{\ssy P}(L^2_x))$ a priori estimate
of the nodal error between the Crank-Nicolson time-discrete approximations of ${\widehat u}$ and
the Crank-Nicolson fully-discrete approximations of ${\widehat u}$.
%
%
%
%
\begin{theorem}\label{Tigrakis}
Let $p=2$ or $3$, ${\widehat u}$ be the solution of the problem
\eqref{AC2}, $(U_h^m)_{m=0}^{\ssy M}$ be the Crank-Nicolson
fully-discrete approximations of ${\widehat u}$ defined by \eqref{FullDE1} and
\eqref{FullDE2},
and $(U^m)_{m=0}^{\ssy M}$ be the Crank-Nicolson time-discrete
approximations of ${\widehat u}$ defined by \eqref{CN_S1} and \eqref{CN_S2}.
Then, there exists a constant $C>0$, independent of ${\sf M}_{\star}$,
${\Delta t}$, $h$ and $\dtau$,  such that
\begin{equation}\label{Lasso1}
\max_{1\leq{m}\leq {\ssy M}}\left({\mathbb E}\left[
\big\|U_h^m-U^m\big\|^2_{\ssy 0,D}\right]\right)^{\half}
\leq\,C\,\left(\,\epsilon_1^{-\half}\,\,\,h^{\frac{p}{2}-\epsilon_1}
+\epsilon_2^{-\frac{1}{2}}\,\dtau^{\frac{3}{8}-\epsilon_2}\right)
\quad\forall\,\epsilon_1\in\left(0,\tfrac{p}{2}\right],
\quad\forall\,\epsilon_2\in\left(0,\tfrac{3}{8}\right].
\end{equation}
%
\end{theorem}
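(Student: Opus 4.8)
The plan is to mirror the proof of Theorem~\ref{TimeDiscreteErr1}, but with the time-discrete solution operators replaced by their fully-discrete analogues and with the time-discrete deterministic estimate of Proposition~\ref{Propo_Boom1} replaced by the time-discrete/fully-discrete deterministic estimate of Proposition~\ref{X_Aygo_Kokora}. First I would introduce the discrete operators $\Lambda_h:=({\sf I}+\tfrac{\dtau}{2}\,B_h)^{-1}P_h:L^2(D)\to{\sf S}_h^p$, ${\sf Y}_h:={\sf I}-\tfrac{\dtau}{2}\,B_h$ on ${\sf S}_h^p$, and ${\mathcal Q}_{h,m}:=(\Lambda_h{\sf Y}_h)^{m-1}\Lambda_h$ for $m=1,\dots,M$. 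Rewriting \eqref{FullDE2} by means of the discrete biharmonic operator as $U_h^m-U_h^{m-1}+\tfrac{\dtau}{2}\,B_h(U_h^m+U_h^{m-1})=P_h\int_{\ssy\Delta_m}{\widehat W}\,ds$ and arguing by induction as in the derivation of \eqref{Dinner1}, I obtain $U_h^m=\sum_{\ell=1}^m{\mathcal Q}_{h,m-\ell+1}\int_{\ssy\Delta_\ell}{\widehat W}(\tau,\cdot)\,d\tau$; the same induction applied to \eqref{CNFD1}--\eqref{CNFD2} gives $W_h^m={\mathcal Q}_{h,m}w_0$. Lemma~\ref{prasinolhmma} then furnishes a continuous kernel ${\sf G}_{\ssy{\mathcal Q}_{h,m}}$ for each ${\mathcal Q}_{h,m}$, so that $U_h^m(x)=\int_0^{\ssy T}\!\int_{\ssy D}{\mathcal K}_{h,m}(\tau;x,y)\,{\widehat W}(\tau,y)\,dyd\tau$ with ${\mathcal K}_{h,m}(\tau;x,y):=\sum_{\ell=1}^m{\mathfrak X}_{\ssy\Delta_\ell}(\tau)\,{\sf G}_{\ssy{\mathcal Q}_{h,m-\ell+1}}(x,y)$, in analogy with ${\mathcal K}_m$.

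Next I would form the difference $U_h^{\mathfrak m}-U^{\mathfrak m}$, which is represented by a single stochastic-load integral with kernel ${\mathcal K}_{h,{\mathfrak m}}-{\mathcal K}_{\mathfrak m}$, and reproduce the chain of \eqref{WNEQ2}, \eqref{Ito_Isom}, \eqref{HSxar} and \eqref{KatiL2} used in \eqref{EMP_2010_2}: the ${\widehat W}$-integral is rewritten as a $dW$-integral of the $\Pi$-projection, the It{\^o} isometry and \eqref{HSxar} turn the second moment into a space-time $L^2$ norm of the kernel difference, and the contraction \eqref{KatiL2} lets me drop the projection. Since ${\mathcal K}_{h,{\mathfrak m}}-{\mathcal K}_{\mathfrak m}$ is piecewise constant in $\tau$ on the $\Delta_\ell$, after the reindexing $\ell\mapsto{\mathfrak m}-\ell+1$ this bounds $\left({\mathbb E}[\|U_h^{\mathfrak m}-U^{\mathfrak m}\|_{\ssy 0,D}^2]\right)^{\half}$ by $\left(\dtau\sum_{\ell=1}^{\mathfrak m}\|{\mathcal Q}_{h,\ell}-{\mathcal Q}_\ell\|_{\ssy\rm HS}^2\right)^{\half}$.

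The decisive step is to expand the Hilbert--Schmidt norm over $(\varepsilon_\kappa)_{\kappa}$ and to observe that ${\mathcal Q}_\ell\varepsilon_\kappa=W^\ell$ and ${\mathcal Q}_{h,\ell}\varepsilon_\kappa=W_h^\ell$ are exactly the time-discrete and fully-discrete deterministic approximations generated by the datum $w_0=\varepsilon_\kappa$. Interchanging the two summations (the upper index ${\mathfrak m}\le M$ only shrinks the nonnegative sum over $\ell$) and invoking Proposition~\ref{X_Aygo_Kokora} eigenfunction-by-eigenfunction, together with $\|\varepsilon_\kappa\|_{\ssy{\bfdot H}^s}=\lambda_\kappa^s$, I arrive at
\begin{equation*}
{\mathbb E}\left[\|U_h^{\mathfrak m}-U^{\mathfrak m}\|_{\ssy 0,D}^2\right]
\leq\,C\sum_{\kappa=1}^{\infty}\left(\dtau^{\frac{\delta}{2}}\,\lambda_\kappa^{2(\delta-1)}
+h^{p\theta}\,\lambda_\kappa^{3\theta-2}\right)^2
\qquad\forall\,\delta,\theta\in[0,1].
\end{equation*}
Splitting the square into its two diagonal parts, the $\dtau$-sum is $\sum_\kappa\lambda_\kappa^{-4(1-\delta)}=\sum_\kappa\lambda_\kappa^{-(1+8(\frac{3}{8}-\frac{\delta}{2}))}$, which converges for $\delta<\tfrac{3}{4}$ and is bounded via \eqref{SR_BOUND} with $c_{\star}=8$; setting $\epsilon_2=\tfrac{3}{8}-\tfrac{\delta}{2}$ this yields the $\epsilon_2^{-\half}\,\dtau^{\frac{3}{8}-\epsilon_2}$ contribution. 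The $h$-sum is $\sum_\kappa\lambda_\kappa^{-2(2-3\theta)}=\sum_\kappa\lambda_\kappa^{-(1+\frac{6}{p}\epsilon_1)}$ under the choice $\theta=\tfrac{1}{2}-\tfrac{\epsilon_1}{p}$, which converges for $\theta<\tfrac{1}{2}$ and is bounded via \eqref{SR_BOUND} with $c_{\star}=\tfrac{6}{p}$, giving the $\epsilon_1^{-\half}\,h^{\frac{p}{2}-\epsilon_1}$ contribution. Adding the two parts produces \eqref{Lasso1}.

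The main obstacle I anticipate is the very first step, namely certifying that the fully-discrete evolution admits a genuine kernel representation compatible with the Hilbert--Schmidt/It{\^o}-isometry machinery, since ${\mathcal Q}_{h,m}$ composes the $L^2(D)$-projection $P_h$ with the discrete operators $\Lambda_h$ and ${\sf Y}_h$. This is precisely the content of Lemma~\ref{prasinolhmma}, whose simultaneously $L^2(D)$-orthonormal and $\gamma$-orthogonal basis makes the Green functions explicit. Once this representation is secured, the reduction to the deterministic estimate of Proposition~\ref{X_Aygo_Kokora} is structurally identical to the time-discrete case, and the two summations over $\kappa$ are routine applications of \eqref{SR_BOUND}.
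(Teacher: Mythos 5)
Your proposal is correct and follows essentially the same route as the paper's own proof: the same discrete operators ${\mathcal Q}_{h,m}=(\Lambda_h{\sf Y}_h)^{m-1}\Lambda_h$ and kernel representation via Lemma~\ref{prasinolhmma}, the same reduction through \eqref{WNEQ2}, \eqref{Ito_Isom}, \eqref{HSxar} and \eqref{KatiL2} to the Hilbert--Schmidt bound $\left(\dtau\sum_{\ell}\|{\mathcal Q}_{\ell}-{\mathcal Q}_{h,\ell}\|_{\ssy\rm HS}^2\right)^{\half}$, and the same eigenfunction-by-eigenfunction application of Proposition~\ref{X_Aygo_Kokora} followed by \eqref{SR_BOUND} with the parameter choices $\epsilon_1=p(\tfrac{1}{2}-\theta)$, $\epsilon_2=\tfrac{3}{8}-\tfrac{\delta}{2}$. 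The only differences are cosmetic (working with squared quantities and making explicit the identification ${\mathcal Q}_{h,\ell}\varepsilon_\kappa=W_h^\ell$, which the paper leaves implicit).
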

%
%
%
%
%
%
%
%
%
\begin{proof}
Let ${\sf I}:L^2(D)\to L^2(D)$ be the identity operator,
${\sf Y}_h:{\sf S}_h^p\rightarrow{\sf S}_h^p$ be defined
by ${\sf Y}_h:={\sf I}-\tfrac{\dtau}{2}\,B_h$
and $\Lambda_h:L^2(D)\to{\sf S}^p_h$ be the inverse discrete elliptic
operator given by $\Lambda_h:=({\sf I}+\tfrac{\dtau}{2}\,B_h)^{-1}P_h$.
Also, for $m=1,\dots,M$, we define a discrete operator
${\mathcal Q}_{h,m}:L^2(D)\rightarrow{\sf S}_h^p$ by
${\mathcal Q}_{h,m}:=(\Lambda_h{\sf Y}_h)^{m-1}\Lambda_h$,
which has a Green function ${\sf G}_{{\mathcal Q}_{h,m}}$ (cf.
Lemma~\ref{prasinolhmma}).
%
%
Using, now, an induction argument, from \eqref{FullDE2} we
conclude that
\begin{equation*}
U_h^m=\sum_{\ell=1}^{\ssy m} \int_{\ssy\Delta_{\ell}}
{\mathcal Q}_{h,m-\ell+1}({\widehat W}(\tau,\cdot))\,d\tau,
\quad m=1,\dots,M,
\end{equation*}
which is written, equivalently, as follows:
\begin{equation}\label{Anaparastash2}
U_h^m(x)=\int_0^{\ssy T}\!\!\!\int_{\ssy D}
\,{\mathcal K}_{h,m}(\tau;x,y)\,{\widehat W}(\tau,y)
\,dyd\tau\quad\forall\,x\in{\overline D}, \ken m=1,\dots,M,
\end{equation}
where
\begin{equation*}
{\mathcal K}_{h,m}(\tau;x,y)
:=\sum_{\ell=1}^m{\mathfrak X}_{\ssy\Delta_{\ell}}(\tau)
\,{\sf G}_{{\mathcal Q}_{h,m-\ell+1}}(x,y)\quad\forall
\,\tau\in[0,T],\ken\forall\,x,y\in{\overline D}.
\end{equation*}
%
%
%
Using \eqref{Anaparastash2}, \eqref{Dinner1}, \eqref{WNEQ2}, \eqref{Ito_Isom},
\eqref{HSxar} and \eqref{KatiL2}, we get
\begin{equation*}
\begin{split}
\left(\,{\mathbb E}\left[\,\|U^{m}-U_h^{m}\|_{\ssy 0,D}^2\,\right]\,\right)^{\frac{1}{2}}
\leq&\,\left(\,\int_0^{\tau_m}\left(\int_{\ssy D}\!\int_{\ssy D}
\,\left[{\mathcal K}_{m}(\tau;x,y)
-{\mathcal K}_{h,m}(\tau;x,y)\right]^2
\,dydx\right)\,d\tau\,\right)^{\frac{1}{2}}\\
\leq&\,\left(\,\dtau\sum_{\ell=1}^{m}
\left\|{\mathcal Q}_{m-\ell+1}
-{\mathcal Q}_{h,m-\ell+1}\right\|_{\ssy\rm HS}^2\right)^{\frac{1}{2}},
\quad m=1,\dots,M.\\
\end{split}
\end{equation*}
%
%
Let $\delta\in\left[0,\tfrac{3}{4}\right)$ and $\theta\in\left[0,\frac{1}{2}\right)$.
Using the definition of the Hilbert-Schmidt norm and
the deterministic error estimate \eqref{X_tiger_river1},
we obtain
\begin{equation*}
\begin{split}
%
%
\left(\,{\mathbb E}\left[\,\|U^{m}-U_h^{m}\|_{\ssy 0,D}^2\,
\right]\,\right)^{\frac{1}{2}}\leq&\,\left(\,\sum_{k=1}^{\infty}
\,\left[\,\dtau\sum_{\ell=1}^{m}
\left\|{\mathcal Q}_{m-\ell+1}\varepsilon_k-{\mathcal Q}_{h,m-\ell+1}\varepsilon_k
\right\|_{\ssy 0,D}^2\right]\,\right)^{\half}\\
\leq&\,\left(\,\sum_{k=1}^{\infty}
\,\left[\,\dtau\,\sum_{\ell=1}^{m}
\left\|{\mathcal Q}_{\ell}\varepsilon_k-{\mathcal Q}_{h,\ell}\varepsilon_k
\right\|_{\ssy 0,D}^2\right]\,\right)^{\half}\\
%
\leq&\,C\,\left[h^{2\,p\,\theta}
\,\sum_{k=1}^{\infty}
\|\varepsilon_k\|^2_{\ssy {\bfdot H}^{3\theta-2}}
+\dtau^{\delta}\,\sum_{k=1}^{\infty}
\|\varepsilon_k\|^2_{\ssy {\bfdot H}^{2(\delta-1)}}\right]^{\frac{1}{2}}\\
\leq&\,C\,h^{p\,\theta}\,\left(
\,\sum_{k=1}^{\infty} \tfrac{1}{\lambda_k^{2\,(2-3\theta)}}\,\right)^{\half}
+\dtau^{\frac{\delta}{2}}\,\left(\,
\sum_{k=1}^{\infty} \tfrac{1}{\lambda_k^{4(1-\delta)}}\,\right)^{\half}\\
\leq&\,C\,h^{p\,\theta}\,\left(
\,\sum_{k=1}^{\infty} \tfrac{1}{\lambda_k^{1+6\,(\frac{1}{2}-\theta)}}\,\right)^{\half}
+\dtau^{\frac{\delta}{2}}\,\left(\,
\sum_{k=1}^{\infty} \tfrac{1}{\lambda_k^{1+8\,(\frac{3}{8}-\frac{\delta}{2})}}\,\right)^{\half},
\quad m=1,\dots,M,
\end{split}
\end{equation*}
which, along with \eqref{SR_BOUND}, yields
\begin{equation}\label{Dec2015_10th}
\max_{1\leq{m}\leq{\ssy M}}\left(\,{\mathbb E}\left[\,\|U^{m}-U_h^{m}\|_{\ssy 0,D}^2\,
\right]\,\right)^{\frac{1}{2}}\leq\,C\,\left[\,h^{p\,\theta}\,\left(\tfrac{1}{2}-\theta\right)^{-\half}
+\dtau^{\frac{\delta}{2}}\,\left(\tfrac{3}{8}-\tfrac{\delta}{2}\right)^{-\half}\,\right].
\end{equation}
Setting $\epsilon_1=p\,\left(\frac{1}{2}-\theta\right)\in\left(0,\frac{p}{2}\right]$ and
$\epsilon_2=\frac{3}{8}-\tfrac{\delta}{2}\in\left(0,\tfrac{3}{8}\right]$,
we, easily, arrive at \eqref{Lasso1}.
\end{proof}
%
%
%
%
%
%
%
Now, we are able to formulate a discrete in time $L^{\infty}_t(L^2_{\ssy P}(L^2_x))$
error estimate for the Crank-Nicolson fully-discrete approximations of ${\widehat u}$.
%
%
%
%
\begin{theorem}\label{FFQEWR}
Let $p=2$ or $3$, ${\widehat u}$ be the solution of problem \eqref{AC2},
and $(U_h^m)_{m=0}^{\ssy M}$ be the Crank-Nicolson
fully-discrete approximations of ${\widehat u}$ constructed by
\eqref{FullDE1}-\eqref{FullDE2}. Then, there exists a constant
$C>0$, independent of  ${\sf M}_{\star}$, ${\Delta t}$, $h$ and $\dtau$,
such that
\begin{equation}\label{Final_Estimate}
\max_{0\leq{m}\leq{\ssy M}}\left({\mathbb E}\left[
 \|U_h^m-{\widehat u}(\tau_m,\cdot)\|_{\ssy 0,D}^2\right]
\right)^{\half} \leq\,C\,\left(\,
\epsilon_1^{-\frac{1}{2}}\,h^{\frac{p}{2}-\epsilon_1}
+\epsilon_2^{-\frac{1}{2}}\,\dtau^{\frac{3}{8}-\epsilon_2}\,\right)
\quad\forall\,\epsilon_1\in\left(0,\tfrac{p}{2}\right],
\quad\forall\,\epsilon_2\in\left(0,\tfrac{3}{8}\right].
\end{equation}
%
%
\end{theorem}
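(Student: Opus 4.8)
The plan is to obtain \eqref{Final_Estimate} as an essentially immediate consequence of the two preceding results, Theorem~\ref{TimeDiscreteErr1} (bounding the time-discretization error $U^m-{\widehat u}^m$) and Theorem~\ref{Tigrakis} (bounding the space-discretization error $U_h^m-U^m$), combined through a triangle inequality in the $L^2_{\ssy P}(L^2_x)$ norm. First I would dispose of the endpoint $m=0$: since $U_h^0=0$ by \eqref{FullDE1} and ${\widehat u}(\tau_0,\cdot)={\widehat u}(0,\cdot)=0$ by the initial condition in \eqref{AC2}, the $m=0$ term in the maximum vanishes identically, so it suffices to control the range $1\le m\le M$, which is exactly the range covered by the two cited theorems.

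For $1\le m\le M$ I would write, pointwise in $\Omega$,
\begin{equation*}
\big\|U_h^m-{\widehat u}(\tau_m,\cdot)\big\|_{\ssy 0,D}
\le\big\|U_h^m-U^m\big\|_{\ssy 0,D}
+\big\|U^m-{\widehat u}^m\big\|_{\ssy 0,D},
\end{equation*}
and then apply Minkowski's inequality in $L^2(\Omega,dP)$ to pass to the square roots of the expectations:
\begin{equation*}
\left({\mathbb E}\left[\big\|U_h^m-{\widehat u}^m\big\|_{\ssy 0,D}^2\right]\right)^{\half}
\le\left({\mathbb E}\left[\big\|U_h^m-U^m\big\|_{\ssy 0,D}^2\right]\right)^{\half}
+\left({\mathbb E}\left[\big\|U^m-{\widehat u}^m\big\|_{\ssy 0,D}^2\right]\right)^{\half}.
\end{equation*}
Taking the maximum over $1\le m\le M$ and inserting the bounds \eqref{Lasso1} and \eqref{Dinner0} then yields a right-hand side of the form $C(\epsilon_1^{-\half}h^{\frac{p}{2}-\epsilon_1}+\epsilon_2^{-\half}\dtau^{\frac{3}{8}-\epsilon_2})$ coming from Theorem~\ref{Tigrakis} plus $C\,\epsilon^{-\half}\dtau^{\frac{3}{8}-\epsilon}$ coming from Theorem~\ref{TimeDiscreteErr1}.

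The only genuine bookkeeping step is reconciling the two temporal contributions: both carry the same exponent $\frac{3}{8}$ with an arbitrarily small loss, so I would simply identify the free parameter $\epsilon$ of Theorem~\ref{TimeDiscreteErr1} with $\epsilon_2$ and absorb the two $\dtau^{\frac{3}{8}-\epsilon_2}$ terms into a single one, at the cost of enlarging $C$. The spatial term $\epsilon_1^{-\half}h^{\frac{p}{2}-\epsilon_1}$ already appears in the required form and carries over verbatim, with $\epsilon_1\in(0,\frac{p}{2}]$ and $\epsilon_2\in(0,\frac{3}{8}]$ as in the hypotheses of the two source theorems. I do not expect any real obstacle here: the whole content of the argument is the triangle/Minkowski splitting plus the observation that the orders match, so the statement is a corollary rather than an independent estimate. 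The substantive analytic work — the Duhamel representations \eqref{Dinner1} and \eqref{Anaparastash2}, the Hilbert--Schmidt computations, and the low-regularity deterministic nodal estimates \eqref{September2015_0} and \eqref{X_tiger_river1} — has already been carried out in establishing Theorems~\ref{TimeDiscreteErr1} and~\ref{Tigrakis}.
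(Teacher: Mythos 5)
Your proposal is correct and matches the paper's argument exactly: the paper's proof of Theorem~\ref{FFQEWR} consists of the single sentence that the estimate ``is a simple consequence of the error bounds \eqref{Lasso1} and \eqref{Dinner0}.'' You have merely spelled out the details the paper leaves implicit --- the vanishing of the $m=0$ term, the Minkowski/triangle splitting through the time-discrete approximations $U^m$, and the identification $\epsilon=\epsilon_2$ --- all of which are exactly what the paper intends.
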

%
%
%
%
%
%
%
%
%
\begin{proof}
The estimate is a simple consequence of the error bounds
\eqref{Lasso1} and \eqref{Dinner0}.
\end{proof}
%
%
%
%
%
%
\begin{remark}
For an optimal, logarithmic-type choice of the parameter $\delta$ in \eqref{ModelError} and of
the parameters $\epsilon_1$ and $\epsilon_2$ in \eqref{Final_Estimate}, we refer the reader to
the discussion in Remark~3 of \cite{KZ2013a}. 
\end{remark}
%
%
%
\par\medskip
\par\noindent
{\bf Acknowledgements.}
Work supported by the Research Grant no. 3570/
THALES-AMOSICSS: `Analysis, Modeling and Simulations of Complex and
Stochastic Systems' to the University of Crete co-financed by the European Union
(European Social Fund-ESF) and Greek National Funds through the Operational
Program ``Education and Lifelong Learning" of the National Strategic Reference
Framework (NSRF)-Research Funding Program: THALES-Investing in knowledge
society throught the European Social Fund.
%
%
%
%
%
\def\cprime{$'$}
\def\cprime{$'$}
\end{document}